\newtheorem{theorem}{Theorem}[section]
\newtheorem{corollary}[theorem]{Corollary}
\newtheorem{lemma}[theorem]{Lemma}
\newtheorem{question}[theorem]{Question}
\newtheorem{proposition}[theorem]{Proposition}
\theoremstyle{definition}
\newtheorem{definition}[theorem]{Definition}
\newtheorem{example}[theorem]{Example}
\newtheorem{remark}[theorem]{Remark}
\begin{document}

%\SetWatermarkScale{3}
%\SetWatermarkLightness{1.0}

\title[On non-unif. spec. and uniqueness of equilibrium state in exp. systems]{On non-uniform specification and uniqueness of the equilibrium state in expansive systems}

\begin{abstract}
In \cite{bowen}, Bowen showed that for an expansive system $(X,T)$ with specification and a potential $\phi$ with the Bowen property, the equilibrium state is unique and fully supported. %Each of those hypotheses involves a constant; one for the distance at which length-$n$ orbit segments may be shadowed using the specification property, and the other for the upper bound on variation on $n$th partial sums of $\phi$ over small $d_n$-balls.
We generalize that result by showing that the same conclusion holds for non-uniform versions of Bowen's hypotheses in which constant parameters are replaced by any increasing unbounded functions $f(n)$ and $g(n)$ with sublogarithmic growth (in $n$). 

We prove results for two weakenings of specification; the first is non-uniform specification, based on a definition of Marcus in (\cite{marcusmonat}), and the second is a significantly weaker property which we call non-uniform transitivity. %, is significantly weaker in that it only allows for shadowing two orbit segments, and yields a single gap distance rather than a bound above which all gaps are possible. (REWRITE) 
We prove uniqueness of the equilibrium state in the former case under the assumption that $\liminf_{n \rightarrow \infty} (f(n) + g(n))/\ln n = 0$, and in the latter case when $\lim_{n \rightarrow \infty} (f(n) + g(n))/\ln n = 0$. In the former case, we also prove that the unique equilibrium state has the K-property.

It is known that when $f(n)/\ln n$ or $g(n)/\ln n$ is bounded from below, equilibrium states may not be unique, and so 
%Examples from previous work are presented which show that when $f(n)/\ln n$ or $g(n)/\ln n$ is bounded from below, an expansive system may have multiple equilibrium states. Therefore, 
this work shows that logarithmic growth is in fact the optimal transition point below which uniqueness is guaranteed. Finally, we present some examples for which our results yield the first known proof of uniqueness of equilibrium state.

\end{abstract}

\date{}
\author{Ronnie Pavlov}
\address{Ronnie Pavlov\\
Department of Mathematics\\
University of Denver\\
2280 S. Vine St.\\
Denver, CO 80208}
\email{rpavlov@du.edu}
\urladdr{www.math.du.edu/$\sim$rpavlov/}
\thanks{The author gratefully acknowledges the support of NSF grant DMS-1500685.}
\keywords{Expansive, non-uniform specification, uniqueness of equilibrium state}
\renewcommand{\subjclassname}{MSC 2010}
\subjclass[2010]{Primary: 37D35; Secondary: 37B10, 37B40}
%below are the definitions for some subject classification numbers
%22D40 Ergodic theory on groups 
%37A05 Measure-preserving transformations 
%37A15 General groups of measure preserving transformations 
%37A35 Entropy and other invariants, isomorphism, classification 
%37B10 symbolic dynamics
%37B40 topological entropy
%37B50 multi-dimensional shifts of finite type, tiling systems 
%37C40 smooth ergodic theory, invariant measures
%37C45 dimension theory of dynamical systems
%37C85 Dynamics of group actions other than Z and R, and foliations 
%37C99 smooth dynamical systems, general theory
%37D35 thermodynamic formalism, variational principles, equilibrium states
\maketitle

%\SetWatermarkScale{4}

\section{Introduction}\label{intro}

%HOW MUCH GENERAL DISCUSSION ABOUT SUBSHIFTS/ENTROPY IS NECESSARY?

%IDEAS FOR MORE: IF DEF OF NON-UNIF. SPEC INCLUDES EXISTENCE OF PERIODIC POINT, THEN GET UNIQUE MME LIM OF PER PTS AUTOMATICALLY BY 
%$g(n) = o(n)$. IF GAP DEPENDS ONLY ON MIDDLE WORD (i.e. take a min rather than max in 2-sided def), PROB GET BOUNDS ON MEASURES AND ALL %KINDS OF GOOD STUFF

%FIX HOFBAUER REFERENCE/FRAMING... IDEA FOR THIS: LEAVE AS IS, NOTE IN REMARK THAT AN EXAMINATION OF PROOF SHOWS THAT $g(n)$ COULD JUST BE $1/6 \log n$, BUT WE WANTED PRESERVATION UNDER PRODUCTS TO USE LEDRAPPIER TRICK. 

A central question in the theory of topological pressure is knowing when a dynamical system $(X,T)$ and potential $\phi: X \rightarrow \mathbb{R}$ admit a unique equilibrium state. Often, such proofs  use as hypotheses a mixing/shadowing property for the system $(X,T)$ and a regularity condition on the potential $\phi$.

One of the first and most important results of this type was proved by Bowen in \cite{bowen}, using the hypotheses of expansiveness and specification on $(X,T)$ and the so-called Bowen property on 
$\phi$. Informally, $(X,T)$ is expansive if there exists a fixed distance $\delta$ so that any unequal points of $X$ will be separated by distance at least $\delta$ under some iterate of $T$. 
Specification is the ability, given arbitrarily many orbit segments, to find a periodic point of $X$ whose orbit ``shadows'' (meaning it stays within some small distance of) those orbit segments, with gaps dependent only on the desired shadowing distance. The Bowen property is simply boundedness (w.r.t. $n$) of the differences of the partial sums $S_n \phi(x) = \phi(x) + \phi(Tx) + \ldots + \phi(T^{n-1}x)$ over pairs $(x,y)$ whose first $n$ iterates under $T$ stay within some predetermined distance. (See Section~\ref{defs} for formal definitions.) Bowen's theorem can then be stated as follows.

\begin{theorem}{\rm (\cite{bowen})}\label{bowenthm}
If $(X,T)$ is an expansive system with specification and $\phi$ is a Bowen potential, then $(X,T)$ has a unique equilibrium state for $\phi$, which is fully supported.
\end{theorem}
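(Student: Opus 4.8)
The plan is to reprise Bowen's original strategy: construct a candidate measure $\mu$ from maximizing separated sets, use specification to establish a two-sided Gibbs inequality for $\mu$, and then extract full support, the equilibrium property, ergodicity, and uniqueness from that inequality together with expansiveness. Write $P=P(\phi)$ for the pressure, $B_n(x,r)=\{y\in X: d(T^kx,T^ky)<r \text{ for } 0\le k<n\}$ for the Bowen balls, $S_n\phi$ for the ergodic sums, and $Z_n(r)=\sup\{\sum_{x\in E}e^{S_n\phi(x)}: E\subseteq X \text{ is } (n,r)\text{-separated}\}$. Fix the expansive constant $\delta$, a constant $\epsilon_0$ realizing the Bowen property (so $|S_n\phi(x)-S_n\phi(y)|\le D$ whenever $y\in B_n(x,\epsilon_0)$, for a uniform $D$), and a scale $\epsilon\in(0,\min(\delta,\epsilon_0))$. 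Two standard features of expansiveness will be used throughout: $\tfrac1n\log Z_n(r)\to P$ for every $r<\delta$; and any finite partition $\mathcal{Q}$ with $\operatorname{diam}\mathcal{Q}<\delta$ is a two-sided generator, so that $h_\nu(T)=h_\nu(T,\mathcal{Q})$ for every $T$-invariant $\nu$. For each $n$, choose a maximal (hence $(n,\epsilon)$-spanning) $(n,\epsilon)$-separated set $E_n$ with $\sum_{x\in E_n}e^{S_n\phi(x)}\ge\tfrac12 Z_n(\epsilon)$, put $\mu_n=Z_n(\epsilon)^{-1}\sum_{x\in E_n}e^{S_n\phi(x)}\,\delta_x$, and let $\mu$ be a weak-$*$ limit point of $\tfrac1n\sum_{k=0}^{n-1}T^k_*\mu_n$ as $n\to\infty$ along a subsequence.

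The heart of the proof is the \emph{Gibbs inequality}: there exist constants $0<c_1\le c_2$ (depending on $\epsilon$) with $c_1\,e^{S_n\phi(x)-nP}\le\mu(B_n(x,\epsilon))\le c_2\,e^{S_n\phi(x)-nP}$ for all $x\in X$ and $n\ge 1$. Specification with gap $g=g(\epsilon)$ gives quasi-multiplicativity of $Z_\bullet(\epsilon)$ — gluing two maximal separated sets yields $C^{-1}Z_a(\epsilon)Z_b(\epsilon)\le Z_{a+g+b}(\epsilon)\le C\,Z_a(\epsilon)Z_b(\epsilon)$ (the Bowen property bounds the cost of the glued pieces) — and this together with $\tfrac1n\log Z_n(\epsilon)\to P$ forces $Z_n(\epsilon)\asymp e^{nP}$, so that all error terms below are genuinely multiplicative. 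For the lower bound, glue the length-$n$ orbit segment at $x$ onto the orbit segments of a maximal $(m,\epsilon)$-separated set, producing inside $B_n(x,2\epsilon)$ many points whose $e^{S_{n+g+m}\phi}$-masses sum to $\gtrsim e^{S_n\phi(x)}Z_m(\epsilon)$ and which $E_{n+g+m}$ must shadow; comparing with $Z_{n+g+m}(\epsilon)\asymp e^{(n+g+m)P}$, averaging over the shifts, and shrinking the scale gives the bound. For the upper bound, two points of $E_m$ lying in $B_n(x,\epsilon/2)$ can be $\epsilon$-separated only at a coordinate $\ge n$, so $\{T^ny: y\in E_m\cap B_n(x,\epsilon/2)\}$ is $(m-n,\epsilon)$-separated, whence $\mu_m(B_n(x,\epsilon/2))\le e^{S_n\phi(x)+D}Z_{m-n}(\epsilon)/Z_m(\epsilon)\le C'\,e^{S_n\phi(x)+D-nP}$, and the same bound holds for every shift $T^{-k}B_n(x,\epsilon/2)$; averaging and passing to the weak-$*$ limit gives the upper bound (after a harmless adjustment of the scale $\epsilon$).

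Three consequences follow quickly. \emph{Full support}: every nonempty open set contains some $B_n(x,\epsilon)$, of $\mu$-measure at least $c_1e^{S_n\phi(x)-nP}>0$. \emph{$\mu$ is an equilibrium state}: fix a partition $\mathcal{Q}$ with $\operatorname{diam}\mathcal{Q}<\min(\delta,\epsilon_0,\epsilon)$ and Lebesgue number $\delta_0$, and note the sandwich $B_n(x,\delta_0/3)\subseteq\mathcal{Q}^{(n)}(x)\subseteq B_n(x,\operatorname{diam}\mathcal{Q})$ with $\mathcal{Q}^{(n)}=\bigvee_{k=0}^{n-1}T^{-k}\mathcal{Q}$; the lower Gibbs bound (valid at every scale, in particular $\delta_0/3$) and the upper Gibbs bound at scale $\operatorname{diam}\mathcal{Q}$ then give $-\log\mu(\mathcal{Q}^{(n)}(x))=nP-S_n\phi(x)+O(1)$ uniformly in $x,n$, so integrating against $\mu$ and using the Bowen property to compare $S_n\phi$ across an element of $\mathcal{Q}^{(n)}$ yields $H_\mu(\mathcal{Q}^{(n)})+\int S_n\phi\,d\mu=nP+O(1)$; dividing by $n$ and letting $n\to\infty$ gives $h_\mu(T,\mathcal{Q})+\int\phi\,d\mu=P$, hence $h_\mu(T)+\int\phi\,d\mu=P$. (One may instead invoke Misiurewicz's construction in the proof of the variational principle.) \emph{$\mu$ is ergodic}: this is the standard deduction that the Gibbs inequality together with specification makes $\mu$ mixing — specification ``connects'' two Bowen balls and the Gibbs inequality estimates the measures of the pieces.

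For uniqueness, let $\nu$ be any equilibrium state; let $\mathcal{Q}$ be as above and set $\mathcal{R}_N=\bigvee_{k=-N}^{N}T^{-k}\mathcal{Q}$, which increase to the Borel $\sigma$-algebra by expansiveness. Running the estimate above with $\mathcal{R}_N$ but keeping $\mu$ inside the logarithm — so that both the lower and the upper Gibbs bounds pin $-\log\mu(\mathcal{R}_N(x))$ to $(2N+1)P-\sum_{k=-N}^{N}\phi(T^kx)+O(1)$ — and comparing ergodic sums across an element of $\mathcal{R}_N$ via the Bowen property, one obtains for the relative entropy
\[
H_\nu(\mathcal{R}_N\,\|\,\mu)\;=\;-H_\nu(\mathcal{R}_N)\;+\;(2N+1)P\;-\;(2N+1)\!\int\!\phi\,d\nu\;+\;O(1),
\]
with the $O(1)$ uniform in $N$. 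Since $\nu$ is an equilibrium state, $P-\int\phi\,d\nu=h_\nu(T)\le\tfrac1{2N+1}H_\nu(\mathcal{R}_N)$, so the right-hand side is $\le O(1)$; as relative entropy is nonnegative, $\sup_N H_\nu(\mathcal{R}_N\,\|\,\mu)<\infty$, and since the $\mathcal{R}_N$ generate this forces $\nu\ll\mu$. Then $d\nu/d\mu$ is $T$-invariant $\mu$-a.e., hence constant by ergodicity of $\mu$, hence identically $1$, so $\nu=\mu$. I expect the principal obstacle throughout to be the proof of the Gibbs inequality — making the gluing estimates from specification precise and carrying the open/closed-ball and $\epsilon$-versus-$\epsilon/2$ bookkeeping through so that the bounds come out genuinely \emph{multiplicative} rather than merely $e^{o(n)}$, since it is exactly the uniform $O(1)$ control that powers the relative-entropy step. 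This is also the step that must be revisited for the paper's generalization: there the constant parameters $g$ of specification and $D$ of the Bowen property become slowly growing functions, the error terms inflate to $e^{O(f(n)+g(n))}$, and the content of the theorem is that they remain harmless precisely when $(f(n)+g(n))/\ln n\to 0$.
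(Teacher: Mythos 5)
Your proposal is, in essence, Bowen's own proof, and it is sound: the construction of $\mu$ from maximizing separated sets, the two-sided Gibbs inequality via specification and quasi-multiplicativity of $Z_n$, and the deduction of full support, the equilibrium property, ergodicity, and (via the uniformly bounded relative entropies $H_\nu(\mathcal{R}_N\,\|\,\mu)$ over a generating sequence of partitions) absolute continuity and hence uniqueness are all correct in outline, with the genuine work concentrated exactly where you say it is, namely in making the Gibbs constants uniform. The paper does not reprove this theorem --- it quotes it from \cite{bowen} and instead proves Theorems~\ref{mainthm} and \ref{mainthm2}, which contain it as the special case of constant gap and variation bounds --- and the route taken there is genuinely different from yours. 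No Gibbs measure is constructed. Uniqueness is proved by contradiction: two distinct ergodic equilibrium states are mutually singular; the maximal ergodic theorem produces sets $V,W$ of measure at least $1/2$ all of whose forward (resp.\ backward) orbit averages mostly avoid certain disjoint open sets; Theorem~\ref{measbd} (a lower bound on $\sum e^{S_n\phi}$ over separated subsets of any set of positive measure for an equilibrium state, extracted from the generating-partition entropy identity) produces large separated families inside $V$ and $W$; and gluing these with (non-uniform) transitivity yields an $(n,\eta)$-separated set whose partition sum exceeds the upper bound of Theorem~\ref{specpartbdthm}, a contradiction. Full support is handled by the same gluing trick run against a Bowen ball in the complement of the support. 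What each approach buys: yours yields the Gibbs property and a precise description of $\mu$, but it leans on uniform $O(1)$ constants at every step --- in particular the relative-entropy argument collapses the moment the Gibbs ratio is only $n^{o(1)}$ rather than bounded, since $\sup_N H_\nu(\mathcal{R}_N\,\|\,\mu)$ is then no longer finite; the paper's route gives less information about $\mu$ but survives when the errors inflate to $e^{O(f(n)+g(n))}=n^{o(1)}$, because such factors are negligible against the exponential discrepancy it engineers in the partition function. For this reason your closing sentence is slightly off as a description of the generalization: it is not obtained by carrying inflated error terms through the Gibbs argument, but by abandoning the Gibbs inequality altogether.
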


The assumptions of specification for $(X,T)$ and the Bowen property for $\phi$ each have associated constant bounds independent of a parameter $n$; for specification there is the bound $f(n)$ on the gap size required between shadowing orbit segments of length $n$, and for the Bowen property there is the bound $g(n)$ on the associated variation of the $n$th partial sum.

The main results of this work show that the same conclusions hold even for unbounded
$f(n)$ and $g(n)$, as long as they grow sublogarithmically with $n$. We have results using two different versions of specification; the first is called non-uniform specification, and the second, much weaker, property is called non-uniform transitivity. %Unsurprisingly, the latter requires stronger assumptions on $f$ and $g$.

\begin{theorem}\label{mainthm}
If $(X,T)$ is an expansive dynamical system (with expansivity constant $\delta$) with non-uniform specification with gap bounds $f(n)$ (at scale $\delta$), $\phi$ is a potential with partial sum variation bounds $g(n)$ (at scale $\delta$), and $\liminf_{n \rightarrow \infty} \frac{f(n) + g(n)}{\ln n} = 0$, then $X$ has a unique equilibrium state for $\phi$, which is fully supported.
\end{theorem}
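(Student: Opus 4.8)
The plan is to run Bowen's argument from \cite{bowen}, upgrading every step in which a constant bound appears to a version with a subexponential bound, and isolating the single place where sublogarithmic growth is genuinely needed. Fix a scale $\epsilon$ with $2\epsilon<\delta$ (so the Bowen balls $B_n(x,\epsilon)$ are nested in $n$ and $\bigcap_n B_n(x,\epsilon)=\{x\}$) and a finite partition $\mathcal P$ with $\operatorname{diam}\mathcal P<\epsilon$ and boundaries null for the (finitely many) measures in play. By expansiveness $\mathcal P$ is generating, so $h_\mu(T)=\lim_n\tfrac1n H_\mu(\mathcal P_n)$ for every invariant $\mu$, and $P=P(\phi)=\lim_n\tfrac1n\ln\Lambda_n$ with $\Lambda_n=\sum_{C\in\mathcal P_n}e^{\sup_C S_n\phi}$. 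Since entropy and $\mu\mapsto\int\phi\,d\mu$ are affine, the equilibrium states form a nonempty compact convex set whose extreme points are the ergodic equilibrium states, so it suffices to prove there is at most one of the latter; atomic equilibrium states, necessarily supported on a single periodic orbit, are disposed of by a direct separate argument.

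\textbf{The candidate measure.} Following \cite{bowen}, take $E_n\subset X$ with one point in each cell of $\mathcal P_n$, put $\sigma_n=\Lambda_n^{-1}\sum_{x\in E_n}e^{S_n\phi(x)}\,\tfrac1n\sum_{k=0}^{n-1}\delta_{T^kx}$, and let $m$ be a weak-$*$ limit point. The core claim is that $m$ has a \emph{non-uniform Gibbs property}: there is $\psi(n)=O(f(n)+g(n))$, hence $\psi(n)=o(n)$, with
\[
e^{-\psi(n)}\,e^{S_n\phi(x)-nP}\ \le\ m(\mathcal P_n(x))\ \le\ e^{\psi(n)}\,e^{S_n\phi(x)-nP}\qquad(\forall x,\ \forall n).
\]
The upper inequality follows from the variational description of $P$ together with the variation bound $g(n)$ on the oscillation of $S_n\phi$ over a cell of $\mathcal P_n$. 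For the lower inequality one uses non-uniform specification at scale $n$: given $x$ and large $N$, glue the length-$n$ orbit segment of $x$ in front of the length-$N$ orbit segment of each $y\in E_N$, with the gap $\le f(n)$ guaranteed by specification; the resulting Bowen balls are pairwise disjoint, lie in a fixed neighbourhood of $\mathcal P_n(x)$, and estimating their $\sigma_N$-masses (which only involves $f$ and $g$ at scale $n$, with $\Lambda_N$ cancelling) and passing to the limit gives the bound. Granting the Gibbs property, $H_m(\mathcal P_n)=nP-\int S_n\phi\,dm+O(\psi(n))$, so $h_m(T)=P-\int\phi\,dm$: $m$ is an equilibrium state, and its lower Gibbs bound shows at once that $\operatorname{supp}m=X$.

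\textbf{Uniqueness.} Let $\nu$ be any ergodic equilibrium state. By the Brin--Katok/Shannon--McMillan--Breiman theorem and Birkhoff's theorem, for $\nu$-a.e.\ $x$ one has $\tfrac1n\big(\ln\nu(\mathcal P_n(x))-S_n\phi(x)\big)+P\to -h_\nu-\int\phi\,d\nu+P=0$, i.e.\ $\nu(\mathcal P_n(x))=e^{S_n\phi(x)-nP+o(n)}$. Moreover, a variant of the specification construction above, applied to $\nu$ itself, yields the \emph{lower} bound $\nu(\mathcal P_n(x))\ge e^{-\psi(n)}e^{S_n\phi(x)-nP}$ for \emph{every} $x$ and $n$. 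The non-negative martingale $M_n=\nu(\mathcal P_n(\cdot))/m(\mathcal P_n(\cdot))$ (for the filtration by the $\mathcal P_n$, under $m$) converges $m$-a.e.\ to $M_\infty=d\nu_{\mathrm{ac}}/dm$, and by the Gibbs bounds $M_n\ge e^{-2\psi(n)}$ everywhere. Now evaluate along the subsequence $n_k$ realising $\liminf_n(f(n)+g(n))/\ln n=0$: there $\psi(n_k)=o(\ln n_k)$, so $e^{-2\psi(n_k)}$ is subpolynomial in $n_k$, while the a.e.\ asymptotic for $\nu$ and the Gibbs bounds for $m$ confine $M_{n_k}$, on a set of full $\nu$-measure, between $e^{-o(\ln n_k)}$ and $e^{o(\ln n_k)}$ times a factor controlled by the $\mathcal P_{n_k}$-combinatorics; pushing this through the martingale, together with $T$-invariance and ergodicity of $M_\infty$, forces $M_\infty$ to be a positive constant, hence $\nu=m$. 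Full support of every equilibrium state then follows.

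\textbf{Main obstacle.} The difficulty is concentrated in the last step. The errors $e^{\pm\psi(n)}$ tend to infinity even along the optimal subsequence, so one cannot simply bound $d\nu/dm$ in $L^\infty$; what rescues the argument is that sublogarithmic growth forces $e^{\psi(n_k)}$ into the subpolynomial regime, which is exactly the borderline at which these errors stop dominating the entropy/counting estimates at scale $n_k$, and carrying out that comparison quantitatively — while simultaneously taming the uncontrolled $o(n)$ term from Shannon--McMillan--Breiman by an Egorov argument on a large-measure set of uniform convergence — is the technical heart of the proof. This is also precisely why the hypothesis is sharp: as soon as $f(n)/\ln n$ or $g(n)/\ln n$ stays bounded below, $e^{\psi(n)}$ is genuinely polynomial, the comparison breaks down, and non-uniqueness becomes possible.
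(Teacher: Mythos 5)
Your proposal follows Bowen's original route (construct a Gibbs-type limit measure $m$ and compare every ergodic equilibrium state $\nu$ to it), whereas the paper argues by contradiction from two ergodic equilibrium states using the maximal ergodic theorem and a direct counting construction; more importantly, your version has a genuine gap at exactly the point you label ``the technical heart.'' The step that fails is the comparison of $\nu$ with $m$. First, the everywhere lower Gibbs bound $\nu(\mathcal{P}_n(x))\geq e^{-\psi(n)}e^{S_n\phi(x)-nP}$ for an \emph{arbitrary} ergodic equilibrium state $\nu$ is asserted to follow from ``a variant of the specification construction applied to $\nu$ itself,'' but no such variant exists: specification lets you build orbit segments and thereby bound from below the measure of cylinders for the \emph{constructed} measure $m$ (whose definition as a limit of the normalized sums $\sigma_N$ is what converts the gluing estimate into a measure bound), not for an arbitrary invariant measure. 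Indeed such a bound for every $x$ would already imply $\operatorname{supp}\nu=X$ and, paired with the upper bound for $m$, would force $\nu\ll m$ quantitatively --- i.e., it is essentially equivalent to the uniqueness being proved. Brin--Katok/Shannon--McMillan--Breiman only give $\nu(\mathcal{P}_n(x))=e^{S_n\phi(x)-nP+o(n)}$ for $\nu$-a.e.\ $x$ with an uncontrolled $o(n)$, which is far too weak.

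Second, even granting both Gibbs bounds with error $e^{\pm\psi(n)}$, the martingale conclusion is vacuous: $M_n\geq e^{-2\psi(n)}$ with $\psi(n)\to\infty$ (even with $\psi(n_k)=o(\ln n_k)$ along the anchor subsequence) does not prevent $M_\infty=0$ on a set of positive $m$-measure, i.e., does not rule out $\nu\perp m$; ``confining $M_{n_k}$ between $e^{-o(\ln n_k)}$ and $e^{o(\ln n_k)}$'' is no confinement at all. This is precisely where Bowen's bounded-constant argument breaks (one ends with $\nu(C_n)\ln(1/\mu(C_n))\leq O(\psi(n))$, which is useless when $\psi(n)\to\infty$ because the rate at which $\mu(C_n)\to0$ is uncontrolled), and where the paper substitutes a new mechanism: the maximal ergodic theorem produces sets $V,W$ of measure at least $1/2$ on which \emph{every} initial/terminal orbit average of visits to suitable open sets is controlled; gluing a $V$-segment of length $2jf(n)$ to a $W$-segment for each of roughly $n/f(n)$ values of $j$ yields a single $(n,\eta)$-separated set whose partition sum is at least of order $\frac{n}{f(n)^2}n^{-4/5}e^{nP}$, contradicting the upper bound $e^{nP}n^{\epsilon}$ exactly when $f,g$ are sublogarithmic along the anchor sequence. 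Your proposal contains no analogue of this polynomial gain, so the subpolynomial error has nothing to beat. (Minor further issues: $\psi(n)=o(n)$ for all $n$ does not follow from the $\liminf$ hypothesis, only along the anchor sequence; and $\bigvee_{i\geq0}T^{-i}\mathcal{P}$ is not generating for an invertible expansive system, so your martingale limit is only a conditional density on the forward $\sigma$-algebra.)
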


\begin{theorem}\label{mainthm2}
If $(X,T)$ is an expansive dynamical system (with expansivity constant $\delta$) with non-uniform transitivity with gap bounds $f(n)$ (at scale $\delta$), $\phi$ is a potential with partial sum variation bounds $g(n)$ (at scale $\delta$), and if $\lim_{n \rightarrow \infty} \frac{f(n) + g(n)}{\ln n} = 0$, then $X$ has a unique equilibrium state for $\phi$, which is fully supported.
\end{theorem}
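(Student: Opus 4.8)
The plan is to run Bowen's proof of Theorem~\ref{bowenthm} in a ``non-uniform'' form, carefully tracking how the constants degrade into error terms and checking that the hypothesis $\lim_{n\to\infty}(f(n)+g(n))/\ln n=0$ is exactly what is needed to control them. First I would use expansivity to reduce to an essentially symbolic setting: writing $P=P(\phi)$ for the topological pressure and $\delta$ for the expansivity constant, $P$ is computed by the weighted counts $Z_n:=\sup\{\sum_{x\in E}e^{S_n\phi(x)}:E\ (n,\delta)\text{-separated}\}$, the partition $\mathcal A$ into sets of diameter $<\delta$ is a generator so that $h_\nu(T)=h_\nu(T,\mathcal A)$ for every invariant $\nu$, and the variation bound $g$ says precisely that $S_n\phi$ is constant up to an additive $O(g(n))$ on each $(n,\delta)$-ball. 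Throughout, ``$(n,\delta)$-orbit segments'' play the role of words of length $n$, and non-uniform transitivity is the statement that any two of them can be concatenated into a legal orbit segment after inserting a gap of length at most $f$ of the relevant scale.

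The crux of the argument — and the step I expect to be the main obstacle — is the sharp partition-function estimate $|\log Z_n-nP|=O(f(n)+g(n))$, equivalently $Z_n=e^{nP}\cdot n^{\pm o(1)}$. One direction, $Z_{n+m}\le Z_nZ_m\,e^{O(g(\min(n,m)))}$, is the easy (sub-multiplicative) half and uses only $g$. The other, $Z_{n+m+\mathrm{gap}}\ge Z_nZ_m\,e^{-O(f(\min(n,m))+g(\min(n,m)))}$, is where non-uniform transitivity enters: glue an arbitrary legal $(n,\delta)$-segment to an arbitrary legal $(m,\delta)$-segment, paying a gap of length $\le f$ and an $O(g)$ distortion of the partial sum, and observe that distinct pairs of segments yield distinct concatenations (up to the bookkeeping allowed by expansivity). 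Feeding these two inequalities into a dyadic unrolling (take $n\approx 2^k$ and split repeatedly), the defects at the $k$-th step are $O(f(2^k)+g(2^k))$, and the series $\sum_k 2^{-k}(f(2^k)+g(2^k))$ converges because $f,g$ are increasing with sublogarithmic, hence in particular sublinear, growth; this pins $\log Z_n$ to within $O(f(n)+g(n))$ of $nP$. Here the hypothesis bites twice: $f(n)+g(n)=o(\ln n)$ upgrades the conclusion from the weak ``$Z_n=e^{nP+o(n)}$'' to ``$Z_n=e^{nP}n^{\pm o(1)}$'', which is what the remainder of the proof needs; and because transitivity only glues two segments at a time, long ``generic'' segments must be assembled through intermediate lengths we cannot prescribe, so one cannot restrict attention to a good subsequence of lengths and the full $\lim=0$ — not merely the $\liminf=0$ that sufficed under non-uniform specification in Theorem~\ref{mainthm} — is genuinely needed.

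With this estimate in hand the rest follows a well-worn path. Let $\mu$ be a weak-$*$ limit point of the time-averages $\tfrac1N\sum_{j=0}^{N-1}T^j_*\big(Z_N^{-1}\sum_{x\in E_N}e^{S_N\phi(x)}\delta_x\big)$ over maximal $(N,\delta)$-separated sets $E_N$; upper semicontinuity of entropy (from expansivity) together with $\tfrac1N\log Z_N\to P$ gives $h_\mu(T)+\int\phi\,d\mu\ge P$, so by the variational principle $\mu$ is an equilibrium state. Counting, via non-uniform transitivity, the weighted $(N,\delta)$-segments whose orbit passes through a prescribed $(n,\delta)$-ball $B_n(x,\delta)$ — insert the segment of $x$ into generic segments and compare partition functions at the relevant scales, where the $n^{\pm o(1)}$ precision is exactly what keeps these comparisons from degenerating — one obtains a non-uniform Gibbs property: there is a constant $Q$ with
\[
Q^{-1}e^{-Q(f(n)+g(n))}\,e^{S_n\phi(x)-nP}\ \le\ \mu(B_n(x,\delta))\ \le\ Q\,e^{Q(f(n)+g(n))}\,e^{S_n\phi(x)-nP}
\]
for all $x\in X$ and $n\ge1$. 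The lower bound gives $\mu(U)>0$ for every nonempty open $U$, i.e.\ full support. Finally, uniqueness is deduced from this Gibbs property by the classical argument (as in \cite{bowen}): the two-sided bound forces $\mu$ to be ergodic, and if $\nu$ is any ergodic equilibrium state then the Shannon--McMillan--Breiman theorem for the generator $\mathcal A$ together with the Birkhoff theorem for $S_n\phi$ gives $\nu(\mathcal A_n(x))=e^{S_n\phi(x)-nP+o(n)}$ for $\nu$-a.e.\ $x$, which by the standard comparison is incompatible with $\nu\perp\mu$; hence $\nu=\mu$, and passing to ergodic components extends this to all equilibrium states.
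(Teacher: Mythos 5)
Your first half is sound and matches the paper's strategy: the doubling/gluing recursion that pins the partition function to $Z_n\le e^{nP}n^{o(1)}$ (together with the easy submultiplicative lower bound $Z_n\ge e^{nP-g(n)}$) is exactly the paper's Theorem~\ref{transpartbdthm} and Corollary~\ref{transcor}, and your observation that the full limit (not just the liminf) is needed because the recursion passes through intermediate lengths one cannot prescribe is the right reason. The gap is in the second half. When you insert the length-$n$ segment of $x$ into a ``generic'' length-$N$ segment to estimate $\mu_N(B_n(x,\delta))$, non-uniform transitivity charges you a gap of length up to $f(\max(n,N-n))\approx f(N)$ and a distortion $g(N)$, and the comparison with $Z_N$ costs a further $N^{o(1)}$. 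So the bounds you actually get are of the form $\mu_N(B_n(x,\delta)) \gtrless e^{S_n\phi(x)-nP}\,e^{\pm C(f(N)+g(N))}N^{\pm o(1)}$, with errors depending on the ambient length $N$, not on $n$. Since $f+g$ is unbounded, $e^{C(f(N)+g(N))}=N^{o(1)}$ is \emph{not} bounded in $N$ (e.g.\ $f(N)=\ln N/\ln\ln N$ gives $N^{1/\ln\ln N}\to\infty$), so no $N$-uniform Gibbs constants survive the weak-$*$ limit; the stated two-sided Gibbs property with error $e^{\pm Q(f(n)+g(n))}$ does not follow, and without it the classical ``Gibbs $\Rightarrow$ ergodic $\Rightarrow$ unique'' chain collapses. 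This is precisely the obstruction that makes the non-uniform case genuinely harder than Bowen's.

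The paper circumvents the Gibbs property entirely. It assumes two distinct ergodic equilibrium states $\mu\perp\nu$, uses mutual singularity to find open sets $U,U'$ of small measure covering complementary parts of $X$, and applies the Maximal Ergodic Theorem to produce sets $V,W$ of measure $\ge 1/2$ on which \emph{every} initial (resp.\ terminal) orbit average of $\chi_{U'}$ (resp.\ $\chi_U$) is $<2/5$. Theorem~\ref{measbd} (an entropy/partition computation, not a Gibbs bound) supplies $(j,\delta/3)$-separated subsets of $V$ and $W$ carrying weight $\ge e^{jP}n^{-1/5}$. The decisive new step is to glue a $V$-segment of length $2jf(n)$ to a $W$-segment filling out length $n$, for each of the $\sim n/f(n)$ choices of splitting position $j$: the visit-frequency control forces points built with different $j$'s to be $(n,\eta)$-separated, so the resulting set has partition sum $\gtrsim \frac{n}{f(n)^2}e^{nP}n^{-4/5}$, and the multiplicity $n/f(n)=n^{1-o(1)}$ beats all the polynomial losses and contradicts $Z_n\le e^{nP}n^{\epsilon}$. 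That union over splitting positions, justified by the maximal ergodic theorem, is the idea your proposal is missing; I'd suggest reworking the uniqueness step around a counting contradiction of this type rather than a Gibbs property.
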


\begin{remark}
Though our hypotheses for these results are stated for scale $\delta$ equal to the expansivity constant, standard arguments (here given as Lemmas~\ref{gapscaleindep} and \ref{sumscaleindep}) show that they are equivalent at any scale less than $\delta$.
\end{remark}

\begin{remark}
For non-invertible surjective $(X,T)$, there is a canonical way to create an invertible system $(X', T')$ called the natural extension. It's well-known that the natural extension has the same simplex of invariant measures as that of the original system, and so Theorems~\ref{mainthm} and \ref{mainthm2} can also be applied to any $(X,T)$ whose natural extension satisfies their hypotheses. In particular, we note that whenever $(X,T)$ is positively expansive (see Theorem 2.2.32(3) of \cite{aokibook}), its natural extension is expansive. 
\end{remark}

We also prove results about preservation of these properties under expansive factors and products, which are unavoidably a bit technical, and so we postpone formal statements to Section~\ref{scaleindep}.
%The class of $(X,T,\phi)$ satisfying the hypotheses of Theorem~\ref{mainthm} is closed under products (for a natural choice of potential on the product). (THIS IS FALSE) Factors are more delicate since there's no immediate meaning for the image of $\phi$ under a factor map. The simplest result we can state is the following: the class of $(X,T)$ satisfying the hypotheses of either Theorem~\ref{mainthm} or Theorem \ref{mainthm2} when $g = 0$ is closed under expansive factors; see Section~\ref{scaleindep} for formal statements. 

In particular, the preservation under products that we prove allows us to use an argument of Ledrappier (\cite{ledrappier}) to prove strong properties of the equilibrium state from Theorem~\ref{mainthm}.

\begin{corollary}\label{Kcor}
For any $(X,T)$ and $\phi$ satisfying the hypotheses of Theorem~\ref{mainthm} and associated unique equilibrium state $\mu$, $(X,T,\mu)$ is a K-system.
\end{corollary}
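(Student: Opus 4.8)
The plan is to follow Ledrappier's strategy (\cite{ledrappier}) for deducing the K-property from uniqueness of the equilibrium state for a product system. The key principle is: if $(X,T,\mu)$ is ergodic with respect to $T$ but fails to be a K-system, then it has a nontrivial Pinsker factor, and in particular there is a $T$-invariant ergodic measure $\nu$ on $(X \times X, T \times T)$ which projects to $\mu$ on each coordinate, is distinct from $\mu \times \mu$, but has $h_\nu(T \times T) = h_\mu(T)$ rather than $2h_\mu(T)$. So the strategy is to first establish that $(X,T)$ and the potential $\phi$, when ``doubled'' to the product system $(X \times X, T \times T)$ with potential $\Phi(x,y) = \phi(x) + \phi(y)$, still satisfy the hypotheses of Theorem~\ref{mainthm}, and hence that $(X \times X, T \times T)$ has a \emph{unique} equilibrium state for $\Phi$. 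Since $\mu \times \mu$ is visibly an equilibrium state for $\Phi$ (its free energy is $h_\mu(T) + h_\mu(T) + 2\int \phi\, d\mu = P(\phi) + P(\phi) = P(\Phi)$, using that pressure is additive over products), uniqueness forces $\mu \times \mu$ to be the \emph{only} equilibrium state.

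First I would invoke the product-preservation result promised in Section~\ref{scaleindep}: that non-uniform specification with gap bounds $f(n)$ is inherited by finite products (with gap bounds of the same order, certainly still $o(\ln n)$ along a subsequence), and similarly that if $\phi$ has partial sum variation bounds $g(n)$ at scale $\delta$ then $\Phi(x,y) = \phi(x)+\phi(y)$ has partial sum variation bounds of order $g(n)$ at scale $\delta$ on $X \times X$ with the sup metric. Expansiveness of $(X \times X, T \times T)$ with the same constant $\delta$ is immediate. Thus $\liminf (f(n)+g(n))/\ln n = 0$ is preserved, and Theorem~\ref{mainthm} applies to the product, giving uniqueness of the equilibrium state for $\Phi$, which as noted must be $\mu \times \mu$.

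Next I would run the Ledrappier argument. Suppose for contradiction that $(X,T,\mu)$ is not a K-system. Then (since $\mu$ is ergodic, indeed it is the unique equilibrium state so in particular ergodic) there is a nontrivial factor $\sigma$-algebra $\mathcal{P} \subsetneq \mathcal{B}$, the Pinsker algebra, with zero entropy and which is nontrivial. Form the relatively independent self-joining $\nu = \mu \otimes_{\mathcal{P}} \mu$ of $\mu$ with itself over $\mathcal{P}$; this is a $T\times T$-invariant measure on $X \times X$ with both marginals equal to $\mu$, and $\nu \ne \mu \times \mu$ precisely because $\mathcal{P}$ is nontrivial. The entropy computation gives $h_\nu(T \times T) = h_\mu(T) + h_\mu(T \mid \mathcal{P}) = 2 h_\mu(T) - 0 \cdot \text{(something)}$ — more precisely $h_\nu(T\times T) = h_\mu(T) + h_\mu(T\mid\mathcal P)$, and one checks using that $\mathcal P$ is the Pinsker algebra that this equals $h_\mu(T) + h_\mu(T) $ minus the entropy carried by $\mathcal P$, hence strictly computing $\int \Phi \, d\nu = 2\int \phi \, d\mu$ we would get $h_\nu(T\times T) + \int \Phi\, d\nu = P(\Phi)$, so $\nu$ is a second equilibrium state for $\Phi$, contradicting uniqueness. (I would lay out the entropy bookkeeping carefully here, as it is the one genuinely substantive computation.)

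The main obstacle I anticipate is precisely this entropy bookkeeping: verifying that the relatively independent joining over the Pinsker algebra has the same $\Phi$-free energy as $\mu \times \mu$, i.e. that passing to the Pinsker factor does not lose entropy in the joint system. The clean way is the standard identity $h_{\mu\otimes_{\mathcal P}\mu}(T\times T) = h_\mu(T) + h_\mu(T\mid \mathcal P)$ together with the defining property of the Pinsker algebra — that $h_\mu(T\mid\mathcal P) = h_\mu(T)$ — which is exactly Ledrappier's observation, so I would cite \cite{ledrappier} for this step rather than reprove it. A minor secondary point to be careful about is that the product-preservation lemmas give the right scale and the right growth rate so that Theorem~\ref{mainthm} genuinely applies to $(X\times X, T\times T, \Phi)$; once that is in hand, the rest is essentially formal.
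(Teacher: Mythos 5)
Your proposal follows exactly the paper's route: show that $(X\times X, T\times T)$ with $\Phi(x,y)=\phi(x)+\phi(y)$ inherits the hypotheses of Theorem~\ref{mainthm} (the paper's Corollary~\ref{productpres2}, which applies trivially to the self-product since the gap and variation bounds coincide on both factors), deduce uniqueness of the equilibrium state for $\Phi$, and then invoke Ledrappier's theorem (the paper's Theorem~\ref{ktrick}) to conclude the K-property. The only difference is that you sketch the internals of Ledrappier's joining argument where the paper cites it as a black box; your version of that sketch is essentially sound, so the proposal is correct and matches the paper's proof.
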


Since K-systems have positive entropy, this also answers a question of Climenhaga from \cite{vaughntowers} about so-called hyperbolic potentials. Following \cite{IRRL}, a potential $\phi$ is said to be hyperbolic for $(X,T)$ if every equilibrium state has positive entropy. Climenhaga's question was the following:

\begin{question}[\cite{vaughntowers}, Question 3.20]
Is there an axiomatic condition on a subshift $(X,T)$, weaker than specification
(perhaps some form of non-uniform specification), guaranteeing that every H\"{o}lder potential
on $(X,T)$ is hyperbolic? Is there such a condition that is preserved under passing to (subshift) factors?
\end{question}

Since K-systems have positive entropy and H\"{o}lder potentials on subshifts are Bowen (see, for instance, \cite{CT1}), Corollary~\ref{Kcor} gives the following positive answer.

\begin{corollary}\label{hypcor}
The class of subshifts $(X,T)$ with non-uniform specification with gap bounds $f(n)$ satisfying $\liminf_{n \rightarrow \infty} f(n)/\ln n = 0$ is closed under (subshift) factors, and for any such subshift, every H\"{o}lder potential is hyperbolic.
\end{corollary}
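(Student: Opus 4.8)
The plan is to deduce both halves of the statement from results already in hand: Theorem~\ref{mainthm}, Corollary~\ref{Kcor}, the fact (\cite{CT1}) that H\"older potentials on subshifts are Bowen, and the preservation-under-expansive-factors result promised in Section~\ref{scaleindep}. So the argument is really an assembly, and the only real work is checking that hypotheses transfer cleanly.

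First I would handle closure under factors. A factor map between subshifts is a sliding block code and hence an expansive factor map (both spaces are expansive at the natural scale on which two points are close iff they agree at coordinate $0$), so the factor-preservation result of Section~\ref{scaleindep} applies: a subshift factor $(Y,S)$ of $(X,T)$ again has non-uniform specification, with some gap bound $f'$. The point to verify is that $f'$ still satisfies $\liminf_{n\to\infty} f'(n)/\ln n = 0$. This is where one must be a little careful, since passing through a block code with window size $c$ inflates the gap bounds: an orbit segment of length $n$ in the factor is read off from one of length roughly $n+c$ upstairs, so $f'(n)$ is controlled by $f(n+c)$ plus a constant. Because $f$ is increasing and $\ln(n+c)/\ln n \to 1$, this inflation is absorbed in the limit and the $\liminf$ condition descends to $f'$. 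Hence the class is closed under subshift factors.

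Next I would handle hyperbolicity. Let $(X,T)$ be a subshift in the class and $\phi$ a H\"older potential. By \cite{CT1}, $\phi$ is Bowen, i.e.\ has constant partial sum variation bounds $g(n)\equiv g$; then $\liminf_{n\to\infty} \frac{f(n)+g(n)}{\ln n} = \liminf_{n\to\infty}\frac{f(n)}{\ln n} = 0$, so the hypotheses of Theorem~\ref{mainthm} hold and $\phi$ admits a unique equilibrium state $\mu$. By Corollary~\ref{Kcor}, $(X,T,\mu)$ is a K-system, and K-systems have positive entropy, so $h_\mu(T)>0$. Since $\mu$ is the \emph{only} equilibrium state for $\phi$, every equilibrium state for $\phi$ has positive entropy, which is exactly the assertion that $\phi$ is hyperbolic.

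I do not expect a serious obstacle here: the corollary is a packaging of Theorem~\ref{mainthm}, Corollary~\ref{Kcor}, and the factor-preservation theorem, together with the standard fact that H\"older implies Bowen on subshifts. The one genuinely non-formal step is the claim that the factor stays in the class, i.e.\ that its gap-bound function inherits the sublogarithmic $\liminf$ condition; everything hinges on the factor-preservation statement from Section~\ref{scaleindep} being phrased so that the new gap bounds are dominated by a bounded shift of the old ones, which is why it is worth stating that result carefully there.
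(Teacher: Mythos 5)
Your proof is correct and follows essentially the same route as the paper: closure under factors via the factor-preservation result of Section~\ref{scaleindep} (Corollary~\ref{factorpres}, where the bounded inflation of the gap bounds is absorbed by the $\liminf$ condition exactly as you note), and hyperbolicity via H\"older $\Rightarrow$ Bowen, Theorem~\ref{mainthm}, and the K-property from Corollary~\ref{Kcor}. No gaps.
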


We also collect results from the literature which demonstrate that when one of $f,g$ is $0$ and the other quantity grows logarithmically, uniqueness of the equilibrium state is still not guaranteed; this shows that our hypotheses cannot be weakened by too much. 

Full shifts correspond to the case $f = 0$, and for those there is the following example, based on a classical example of Hofbauer from \cite{hofbauer}.

\begin{theorem}[\cite{CL}]\label{negpotthm}
For every $\epsilon > 0$, there exists a potential $\phi$ on the full shift $(X,T)$ on $\{0,1\}$ with partial sum variation bounds $g(n) < (1 + \epsilon) \ln n$ where $(X,T)$ has multiple equilibrium states for $\phi$, whose supports are disjoint. 
\end{theorem}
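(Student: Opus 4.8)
The plan is to build, following Hofbauer's classical construction, a potential on the full $2$-shift from a single ``renewal'' structure, tuned so that the topological pressure equals $0$ while both fixed-point measures $\delta_{0^\infty}$ and $\delta_{1^\infty}$ realize it; since those two measures have disjoint supports, that already gives non-uniqueness, and the whole problem is to do the tuning while keeping the partial sum variation of $\phi$ below $(1+\epsilon)\ln n$. For a point $x$ with coordinates $x_0,x_1,\dots$, let $k(x)\in\{1,2,\dots\}\cup\{\infty\}$ be the length of the initial constant run (so $k(x)=\infty$ exactly for $x\in\{0^\infty,1^\infty\}$). Fix a convex, strictly decreasing sequence of negative reals $A_1>A_2>\cdots\to-\infty$ with $A_\ell-A_{\ell-1}\to0$, and put $\phi(x)=A_{k(x)}-A_{k(x)-1}$ for $2\le k(x)<\infty$, $\phi(x)=A_1$ for $k(x)=1$, and $\phi(0^\infty)=\phi(1^\infty)=0$. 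Then $\phi$ is continuous, $\phi\le0$, and $\phi$ vanishes at both fixed points; the point of this parametrization is that the $\phi$-sum over a maximal constant block of length $\ell$ telescopes exactly to $A_\ell$.

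Decompose a length-$n$ word $w$ into its maximal constant blocks, $w=s_0^{\ell_0}\cdots s_{m-1}^{\ell_{m-1}}$ with $\ell_i\ge 1$ and consecutive symbols distinct. Using $\phi\le0$ together with $\phi\equiv0$ at the two fixed points, one sees that $\sup_{x\in[w]}S_n\phi(x)$ is attained by extending the last block forever and equals $A_{\ell_0}+\cdots+A_{\ell_{m-2}}$ (the final block contributes $0$), which yields the renewal identity
\[
\sum_{n\ge1}\Big(\sum_{|w|=n}\ \sup_{x\in[w]}e^{S_n\phi(x)}\Big)z^n\;=\;\frac{2z}{(1-z)\bigl(1-G(z)\bigr)},\qquad G(z)=\sum_{\ell\ge1}e^{A_\ell}z^\ell,
\]
so $P(\phi)$ is $-\ln$ of the radius of convergence of the right-hand side. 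Now choose the $A_\ell$ so that $G$ has radius of convergence $1$ and $G(1)<1$: for instance $A_\ell=-\ln\ell-2\ln\ln\ell-c$ for large $\ell$, where $c$ is a constant large enough that $\sum_\ell e^{A_\ell}<1$ (possible since $\sum_\ell\frac1{\ell(\ln\ell)^2}<\infty$), the finitely many remaining values adjusted to keep $(A_\ell)$ convex and strictly decreasing. Since $G$ has nonnegative coefficients and $G(1)<1$, $1-G(z)$ is zero-free on $\{|z|\le1\}$, so the radius of convergence is exactly $1$ and $P(\phi)=0$. As $h(\delta_{i^\infty})=0$ and $\int\phi\,d\delta_{i^\infty}=\phi(i^\infty)=0=P(\phi)$ for $i=0,1$, both $\delta_{0^\infty}$ and $\delta_{1^\infty}$ are equilibrium states for $\phi$, and their supports $\{0^\infty\}$ and $\{1^\infty\}$ are disjoint.

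It remains to bound the partial sum variation. If $x$ and $y$ agree on coordinates $0,\dots,n-1$, then $\phi(T^jx)=\phi(T^jy)$ for every $j$ outside the final maximal block of $x_0\cdots x_{n-1}$, and for $j$ inside that block both $\phi(T^jx)$ and $\phi(T^jy)$ lie in $[A_{n-j}-A_{n-j-1},\,0]$ (here convexity of $(A_\ell)$ is used); summing the at most $n$ relevant terms, $|S_n\phi(x)-S_n\phi(y)|\le\sum_{q=1}^{n}(A_{q-1}-A_q)=-A_n$ (with $A_0:=0$), and this is attained by taking $x$ to be $s$ repeated $n$ times followed by the other symbol forever and $y=s^\infty$. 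Hence $g(n):=-A_n=\ln n+2\ln\ln n+c+o(1)$ is a valid partial sum variation bound at the expansivity scale, and $g(n)<(1+\epsilon)\ln n$ for all large $n$; the unavoidable failure for the finitely many smallest $n$ (already $g(1)>0=\ln1$ for any nonconstant $\phi$) does not affect the content of the statement, and passing to a smaller scale $\delta$ only improves the bound, to $-A_{n+m(\delta)}+A_{m(\delta)}$ for a window $m(\delta)\to\infty$.

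The one real obstacle is the tension inside the choice of $(A_\ell)$: making $\phi$ ``more negative'' drives $G(1)$ down, which is what forces $P(\phi)=0$, but drives $-A_n$ up, which threatens the variation bound; the two requirements can be met simultaneously precisely when $-A_\ell$ grows logarithmically, which is exactly why this example sits at the threshold isolated by Theorems~\ref{mainthm} and~\ref{mainthm2}. The remaining ingredients---the renewal bookkeeping (the trailing incomplete block and the factor $2$ from the two choices of $s_0$) and the standard identification, for subshifts, of the pressure computed from cylinder sums with the variational pressure---are routine.
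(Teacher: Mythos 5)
Your construction is correct in substance, and it is worth noting up front that the paper does not actually prove this statement: Theorem~\ref{negpotthm} is imported from \cite{CL} (the ``Double Hofbauer model''), with only a one-sentence remark on passing from the one-sided to the two-sided setting. What you have written is essentially a self-contained reconstruction of that example: a Hofbauer-type renewal potential, symmetrized over the two symbols so that both $\delta_{0^\infty}$ and $\delta_{1^\infty}$ become equilibrium states. The three load-bearing steps all hold: the telescoping of $\sum_{j<\ell}\phi(T^jx)$ to $A_\ell$ over a completed constant block; the renewal identity for $\sum_{|w|=n}\sup_{[w]}e^{S_n\phi}$ together with the observation that $G(1)<1$ keeps $1-G$ zero-free on the closed unit disc, so the radius of convergence is exactly $1$ and $P(\phi)=0$ (pinned from below by the two constant words, each of weight $1$); and the variation estimate $|S_n\phi(x)-S_n\phi(y)|\le -A_n$, which genuinely needs the convexity of $(A_\ell)$ exactly where you invoke it. With $A_\ell=-\ln\ell-2\ln\ln\ell-c$ this gives $g(n)/\ln n\to 1$, so a single potential serves all $\epsilon$ simultaneously.

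Two small repairs. First, on the two-sided full shift the set $\{k(x)=\infty\}$ is not $\{0^\infty,1^\infty\}$ but the countable set of points whose forward ray is constant; you should define $\phi=0$ on all of these (which continuity forces anyway), and nothing else changes since the integrals against $\delta_{i^\infty}$ are still $0$. Second, the literal inequality $g(n)<(1+\epsilon)\ln n$ cannot hold at $n=1$ for any nonconstant potential, so the statement must be read asymptotically or up to an additive constant; you flag this correctly, and since the paper only ever uses the asymptotics of $g(n)/\ln n$, this is a defect of the statement rather than of your argument. The remaining appeals to routine facts (identification of $P(X,T,\phi)$ with the growth rate of the cylinder sums for subshifts, existence of the limit by submultiplicativity) are indeed standard.
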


\begin{remark}
The example in \cite{CL}, the so-called Double Hofbauer model, was actually for one-sided full shifts, rather than the two-sided ones treated here. Briefly, they define the potential $\phi$ in terms of the largest nonnegative integer $n$ where $x(0) = x(1) \ldots = x(n)$. It is not hard to adapt this to a two-sided version, which satisfies Theorem~\ref{negpotthm}, by instead choosing maximal $n$ for which $x(-n) = \ldots = x(n)$. In fact, this is essentially the idea behind our later Example~\ref{potex}.
\end{remark}

The case $g = 0$ corresponds to equilibrium states for constant $\phi$, i.e. measures of maximal entropy. There, we have the following result, proved independently in \cite{kwietniaketal} and \cite{pavlovspec}.

\begin{theorem}[\cite{kwietniaketal}, Lemma 6; \cite{pavlovspec}, Theorem 1.1]\label{oldnegthm}
For any positive increasing $f$ with $\liminf_{n \rightarrow \infty} \frac{f(n)}{\ln n} > 0$, there exists a subshift $(X,T)$ with non-uniform specification with gap bounds $f(n)$ and multiple ergodic measures of maximal entropy whose supports are disjoint.
\end{theorem}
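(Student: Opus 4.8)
The plan is to prove the theorem by an explicit construction: build a single subshift $(X,T)$ over a finite alphabet that contains two disjoint full shifts of the same positive entropy as subsystems, is glued loosely enough to possess non-uniform specification with a gap bound of logarithmic order, and is glued tightly enough that its entropy is not raised above that of the two cores.

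\emph{Setup.} Fix $c$ with $0<c<\liminf_{n\to\infty} f(n)/\ln n$, and later choose an integer $N=N(c)$ large enough that the entropy estimate below closes. Take an alphabet $\mathcal A=\mathcal A_0\sqcup\mathcal A_1\sqcup\mathcal B$ with $|\mathcal A_0|=|\mathcal A_1|=N$ and $\mathcal B$ a fixed finite ``transition'' alphabet, and let $X_0=\mathcal A_0^{\,\zz}$, $X_1=\mathcal A_1^{\,\zz}$ be two disjoint copies of the full shift on $N$ symbols. Define $X$ to consist of those sequences that decompose into alternating maximal monochromatic blocks (over $\mathcal A_0$ or $\mathcal A_1$) separated by ``bridge'' words over $\mathcal B$, subject to the rule that a bridge adjacent to a monochromatic block of length $m$ must have length at least $c\ln m$, and — the point requiring the most care — with the set of admissible bridge words of each length chosen to be as small as possible (ideally determined by, or at most polynomially many in, the length of the block they leave), and bridges permitted only between blocks of \emph{different} colors. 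Then $X$ is shift-invariant by construction and closed because the defining conditions are closed; being a subshift, it is expansive with any constant $\delta<1$ in the standard metric, and ``non-uniform specification at scale $\delta$'' is exactly the combinatorial gluing statement used below, so no separate scale argument is needed.

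\emph{The two measures and the gluing.} Since $X_0,X_1\subseteq X$ we have $h(X)\ge\ln N$, and the uniform Bernoulli measures $\mu_0$ on $X_0$ and $\mu_1$ on $X_1$ are ergodic, with $\operatorname{supp}\mu_i=X_i$ and $h_{\mu_i}(T)=\ln N$. Thus, once we prove $h(X)=\ln N$, both $\mu_0$ and $\mu_1$ are ergodic measures of maximal entropy for $(X,T)$ with disjoint supports, which is the desired conclusion. Non-uniform specification follows from the bridges: given $w_1,\dots,w_k\in L(X)$ with $|w_i|=n_i$, we extend each $w_i$ on the right to end just after a short monochromatic block of a prescribed color, insert between consecutive pieces a bridge of length $\le c\ln n_i+O(1)$, and close up periodically by returning to $w_1$; all the pieces are legal by construction, the period is explicit, and the inserted gaps have length $\le c\ln n_i+O(1)\le f(n_i)$ for large $n_i$ (small $n_i$ absorbed since $f$ is positive and increasing), so $X$ has non-uniform specification with gap bound $f(n)$.

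\emph{The entropy bound, which is the crux.} It remains to show $\#L_n(X)\le e^{o(n)}N^n$. A word in $L_n(X)$ is determined by its block/bridge length pattern, its coloring, the contents of its monochromatic blocks ($N$ choices per symbol, hence $\le N^n$ total), and its bridge words; everything but the $N^n$ factor must be shown to be $e^{o(n)}$. The main difficulty is that the ``bridge overhead'' one naively gets from ``bridge length $\ge c\ln(\text{adjacent block length})$'' is \emph{not} by itself strong enough: a word consisting of $\Theta(n)$ very short monochromatic blocks separated by length-$1$ bridges would pay negligible overhead yet, through the $\binom{n}{\Theta(n)}$ choices of where the blocks sit and the $2^{\Theta(n)}$ choices of colors, contribute more than $N^n$ (the binary entropy function has infinite slope at $0$). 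So the construction must be arranged — through the exact rule for when a block may abut a bridge, the precise admissible bridge words, and, if necessary, a lower bound on block lengths tied to the bridges — so that a length-$n$ word with $k$ monochromatic blocks is forced to spend a superlinear-in-$k$ amount of its length on bridges, forcing $k=o(n)$, while bridge contents contribute a sub-exponential factor because admissible bridges are polynomially few, and colorings contribute a sub-exponential factor because colors strictly alternate. Making these constants consistent is what pins down the eventual choice of $N$, and is the technical heart of the constructions in \cite{kwietniaketal} and \cite{pavlovspec}; the hypothesis $\liminf_{n\to\infty} f(n)/\ln n>0$ enters precisely to guarantee that the unavoidable per-transition cost $\sim c\ln n$ stays within the budget $f(n)$.
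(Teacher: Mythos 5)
A preliminary remark: the paper does not prove Theorem~\ref{oldnegthm}; it is imported from \cite{kwietniaketal} and \cite{pavlovspec}, so there is no internal argument to compare yours against. On its own terms, your proposal has the right architecture --- two disjoint full-shift cores of entropy $\ln N$ glued by bridges of logarithmic length, with the uniform Bernoulli measures on the cores as the two ergodic measures of maximal entropy with disjoint supports --- and this is indeed the general shape of the cited constructions. But what you have written is a plan, not a proof, and the gap sits exactly where you yourself locate the crux. You correctly observe that the rule ``a bridge adjacent to a monochromatic block of length $m$ has length at least $c\ln m$'' is too weak: a word made of $\Theta(n)$ blocks of bounded length separated by length-$O(1)$ bridges pays negligible bridge overhead, and the positional and coloring freedom then pushes $h(X)$ strictly above $\ln N$, destroying the conclusion. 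Your response is that the construction ``must be arranged'' so that $k$ blocks force a superlinear-in-$k$ bridge expenditure --- but you never exhibit such an arrangement, and you explicitly defer it to the references you are supposed to be reproving. That arrangement (for instance, a constraint bounding the number of color changes in every window of length $n$ by $O(n/\ln n)$, or a transition cost charged against the surrounding window rather than only the adjacent block) is the entire content of the theorem; without it the identity $h(X)=\ln N$ is unsupported and, for the construction as literally defined, false.

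There is a second, independent gap in the specification check. Definition~\ref{nonunifdef} requires a shadowing point for \emph{every} choice of gaps $m_i \ge \max(f(n_i), f(n_{i+1}))$, not merely for some gap of length at most $f(n_i)$. Your gluing exhibits one admissible junction of length roughly $c\ln n_i + O(1)$; to upgrade this you must show that between any two extendable words the connecting language contains admissible fillers of every sufficiently large length, a property that has to be engineered into the bridge set and that pulls directly against your stated desire to keep the bridge set ``as small as possible'' for the entropy bound. Relatedly, the case of small $n_i$ (where $f(n_i)$ may equal $1$) is not actually ``absorbed'' by monotonicity of $f$: if $f(1)=1$ you must be able to glue any two length-one segments with every gap length $\ge 1$, which is a genuine constraint on your transition rules rather than a triviality. (The periodicity of the shadowing point that you arrange is, by contrast, not required by Definition~\ref{nonunifdef} and can be dropped.)
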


\begin{remark}
Theorems~\ref{mainthm} and \ref{oldnegthm} completely answer the question of when non-uniform specification forces uniqueness of the measure of maximal entropy for expansive systems; this is the case if and only if $\liminf_{n \rightarrow \infty} \frac{f(n)}{\ln n} = 0$. 
\end{remark}

\begin{remark}
The reader may notice that there is a gap in the results we've presented for full shifts; Theorem~\ref{negpotthm} shows that non-uniqueness can happen for $g(n)$ with $\lim_{n \rightarrow \infty} \frac{g(n)}{\ln n}$ arbitrarily close to $1$, and Theorems~\ref{mainthm} and \ref{mainthm2} guarantee uniqueness only when $\frac{g(n)}{\ln n}$ approaches $0$ in general or along a subsequence. 
In fact, a careful reading of our proofs shows that these theorems hold as long as the limit or liminf of $\frac{g(n)}{\ln n}$ is smaller than $\frac{1}{6}$.

We chose not to use this as our hypothesis both due to an aesthetic preference for $f(n)$ and $g(n)$ to be on equal footing, and because we wanted hypotheses invariant under products in order to use Ledrappier's arguments to prove the K-property for the unique equilibrium state. However, it seems that the actual ``transition point'' for $g(n)$ is likely of the form $C \log n$ for $\frac{1}{6} \leq C \leq 1$, and it would be an interesting technical problem to prove this and find the $C$ in question; we will not, however, treat that question in this work though.
\end{remark}

%(MENTION HOLDER RESULTS FROM WALTERS THAT SHOW THAT VERY SMALL VARIATION IMPLIES UNIQUE MME WITH MORE?)

%(MENTION SUBTLETY OF WHETHER THERE ARE NON-UNIFORM TRANS COUNTEREXAMPLES FOR SMALLER f?)

%(IS THIS PORTION REALLY APPROPRIATE/NECESSARY?)
The techniques used to prove Theorems~\ref{mainthm} and \ref{mainthm2} are somewhat similar to the proofs of previous weaker results from \cite{pavlovspec}, which applied only to measures of maximal entropy on subshifts and showed only that two such measures $\mu$, $\nu$ could not have disjoint supports. Roughly speaking, the proof in \cite{pavlovspec} involved combining ``large'' collections of words based on $\mu$ and $\nu$ to create more words in $\mathcal{L}(X)$ than there should be by definition of $h(X,T)$. The assumption of disjoint supports of $\mu$ and $\nu$ implied that the words from the two collections could not have overlap above a certain length, which ensured that all words created were distinct. 

For the results in this work, several changes must be made. First of all, obviously `words' must be replaced with `orbit segments shadowed by a very small distance' for general expansive systems, and `number of words' must be replaced by `partition function for an $(n,\delta)$-separated set' for general potentials. These changes require some technical results about changes of scale for expansive systems (see Section~\ref{scaleindep}), but the ideas are all essentially present in previous work of Bowen and others. 

The main advance in this work is dealing with $\mu \neq \nu$ whose supports may not be disjoint. The best we can do then is to assume ergodicity of $\mu, \nu$, which implies their mutual singularity, and therefore the existence of %disjoint measurable sets $A, B$ with $\mu(A) = \nu(B) = 0$, and therefore of 
disjoint compact sets $C, D$ with $\mu(C), \nu(D)$ arbitrarily small (and positive distance $d(C,D)$). The new idea here is the use of the maximal ergodic theorem, which allows us to define ``large'' collections of orbit segments based on $\mu$ and $\nu$ where all initial segments of $\nu$-orbit segments have a large proportion of visits to $C$, and all terminal segments of $\mu$-orbit segments  have a large proportion of visits to $D$. This means that initial segments of $\nu$-orbit segments and terminal segments of $\mu$-orbit segments are separated by $d(C,D)$ at some point, mimicking the lack of overlap from the proof in \cite{pavlovspec}. This is enough to create an $(n, \delta)$-separated collection by combining segments from the two collections whose partition function is larger than the pressure $P(X, T, \phi)$ should allow, achieving the desired contradiction.

\begin{remark} 
In various works (including \cite{CT1}, \cite{CT2}, and \cite{CT3}), Climenhaga and Thompson have defined different weakenings of the specification property, which allowed them to both generalize Bowen's results in a different direction, even treating some non-expansive systems and continuous flows. Without going into full detail here, their definitions involve decomposing all orbit segments of points in the system into prefixes, cores, and suffixes, where the sets of possible prefixes/suffixes are ``small'' in some sense, and where for any $N$, the collection of segments whose prefix and suffix are shorter than $N$ has specification (in the sense that one can always find a point which shadows arbitrarily many such segments, of any lengths, with constant gaps). Existence of such a Climenhaga-Thompson decomposition is less restrictive than non-uniform specification in that specification properties must hold only for a subset of orbit segments, but is more restrictive in that the property required for that subset (weak specification) is significantly stronger. To our knowledge, neither of non-uniform specification or a Climenhaga-Thompson decomposition implies the other.
\end{remark}

Finally, we summarize the structure of the paper: Section~\ref{defs} contains relevant definitions and background on topological dynamics, ergodic theory, and thermodynamic formalism. Section~\ref{scaleindep} contains some results about preservation of various hypotheses under changes of scale, expansive factors, and products. Section~\ref{proofs} contains the proofs of Theorems~\ref{mainthm} and \ref{mainthm2}, including various auxiliary results. Finally, Section~\ref{examples} contains some examples for which our results imply uniqueness of equilibrium state and for which we believe this to be previously not known.

\section*{acknowledgments} 
The author would like to thank Jerome Buzzi and Sylvain Crovisier for pointing out that the proof of Theorem~\ref{mainthm} could be easily adapted to use the weaker hypothesis of non-uniform transitivity (yielding Theorem~\ref{mainthm2}), and would also like to thank Fran{\c c}ois Ledrappier for discussions about the use of the techniques from \cite{ledrappier} for proving the K-property for unique equilibrium states.

\section{Definitions}\label{defs}

\begin{definition}
A \textbf{dynamical system} is given by a pair $(X,T)$ where $X$ is a compact metric space and $T: X \rightarrow X$ is a homeomorphism.
\end{definition}

\begin{definition}
A dynamical system $(X, T)$ is \textbf{expansive} if there exists $\delta > 0$ (called an \textbf{expansivity constant}) so that for all unequal $x, y \in X$, there exists $n \in \mathbb{Z}$ for which $d(T^n x, T^n y) > \delta$.
\end{definition}

A particular class of expansive dynamical systems are given by subshifts, to which the next few definitions refer.

\begin{definition}
Given a finite set $A$ called the \textbf{alphabet}, a subshift $(X,T)$ is given by $X \subset A^{\mathbb{Z}}$ which is closed (in the product topology) and invariant under the left shift map $T$ defined by $(Tx)(i) = x(i+1)$ for $i \in \mathbb{Z}$.
\end{definition}

Every subshift is expansive; simply choose any $\delta$ so that $d(x,y) \leq \delta \Longrightarrow x(0) = y(0)$. Then, any $x \neq y \in X$ must have $x(n) \neq y(n)$ for some $n$, and then $(T^n x)(0) \neq (T^n y)(0)$, so $d(T^n x, T^n y) > \delta$.

\begin{definition}
The \textbf{language} of a subshift $(X,T)$, denoted by $\mathcal{L}(X)$, is the set of finite strings of letters from $A$ (called \textbf{words}) which appear in some $x \in X$.
\end{definition}

We now return to definitions for more general expansive systems. 

\begin{definition}
Given an expansive dynamical system $(X,T)$ and any $n \in \mathbb{N}$ and $\epsilon > 0$, a set $S$ is called \textbf{$(n, \epsilon)$-separated} if for all unequal $x, y \in S$, there exists $0 \leq k < n$ so that $d(T^k x, T^k y) > \epsilon$. 
\end{definition}

\begin{definition}
Given a continuous function $\phi: X \rightarrow \mathbb{R}$ (called a \textbf{potential}), the \textbf{partial sums} of $\phi$ are the functions $S_n \phi: X \rightarrow \mathbb{R}$ defined by $S_n \phi(x) = \sum_{i=0}^{n-1} \phi(T^i x)$.
\end{definition}

%\begin{definition}
%(DO WE USE THIS?) For a dynamical system $(X, T)$, $x \in X$ is called \textbf{periodic} if there exists $n > 0$ for which $T^n x = x$. 
%\end{definition}

\begin{definition}
For a dynamical system $(X,T)$ and potential $\phi$, the $n$th \textbf{partition function} of $\phi$ at scale $\eta$ are the functions
\[
Z(X, T, \phi, n, \eta) := \max_{S \textrm{ is } (n,\eta)-\textrm{separated}} \sum_{x \in S} e^{S_n \phi (x)}.
\]
\end{definition}

\begin{definition}
The \textbf{topological pressure at scale $\eta$} of $(X,T,\phi)$ is
\[
P(X, T, \phi, \eta) := \lim_{n \rightarrow \infty} \frac{\ln Z(X, T, \phi, n, \eta)}{n}.
\]
The \textbf{topological pressure} of $(X,T)$ for a potential $\phi$ is
\[
P(X, T, \phi) := \lim_{\eta \rightarrow 0} P(X, T, \phi, \eta).
\]
\end{definition}

\begin{lemma}[\cite{ruelle}]\label{presscaleindep}
If $(X,T)$ is expansive with expansivity constant $\delta$, then for any $\eta \leq \delta$, $P(X, T, \phi) = P(X, T, \phi, \eta)$.
\end{lemma}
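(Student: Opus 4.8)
The plan is to use expansiveness to upgrade separation at a tiny scale $\epsilon$ into separation at scale $\eta \le \delta$, at the cost of finitely many extra iterates. First I would record the trivial monotonicity fact: if $\epsilon_1 \le \epsilon_2$ then every $(n,\epsilon_2)$-separated set is $(n,\epsilon_1)$-separated, so $Z(X,T,\phi,n,\epsilon)$ is non-increasing in $\epsilon$, hence $P(X,T,\phi,\epsilon)$ is non-increasing in $\epsilon$ and $P(X,T,\phi) = \lim_{\epsilon \to 0^+} P(X,T,\phi,\epsilon) = \sup_{\epsilon > 0} P(X,T,\phi,\epsilon)$. Thus it suffices to prove that $P(X,T,\phi,\epsilon) \le P(X,T,\phi,\eta)$ for every $\eta \le \delta$ and every $0 < \epsilon < \eta$; combined with monotonicity this forces $P(X,T,\phi,\epsilon) = P(X,T,\phi,\eta)$ for all such $\epsilon$, and letting $\epsilon \to 0$ gives the claim.

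The main step is a uniform ``finite-window'' form of expansiveness: given $\eta \le \delta$ and $\epsilon > 0$, there exists $M = M(\epsilon,\eta) \in \mathbb{N}$ such that whenever $d(T^k x, T^k y) \le \eta$ for all $|k| \le M$, one has $d(x,y) \le \epsilon$. I would prove this by contradiction and compactness: if it failed, for each $M$ we could pick $x_M, y_M$ with $d(T^k x_M, T^k y_M) \le \eta$ for $|k| \le M$ but $d(x_M, y_M) > \epsilon$; passing to a subsequence with $x_M \to x$ and $y_M \to y$ yields $x \neq y$ (since $d(x,y) \ge \epsilon$), yet $d(T^k x, T^k y) \le \eta \le \delta$ for every $k \in \mathbb{Z}$, contradicting the definition of the expansivity constant $\delta$.

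With this $M$ fixed, let $S$ be any $(n,\epsilon)$-separated set. For distinct $x,y \in S$ there is $0 \le j < n$ with $d(T^j x, T^j y) > \epsilon$, and applying the contrapositive of the window estimate to the pair $(T^j x, T^j y)$ produces some $|k| \le M$ with $d(T^{j+k}x, T^{j+k}y) > \eta$; since $j + k + M \in [0, n+2M)$, the set $T^{-M}S := \{T^{-M}x : x \in S\}$ is $(n+2M,\eta)$-separated. Moreover $S_{n+2M}\phi(T^{-M}x) = \sum_{i=-M}^{n+M-1}\phi(T^i x) \ge S_n\phi(x) - 2M\|\phi\|_\infty$, where $\|\phi\|_\infty < \infty$ by continuity of $\phi$ on the compact space $X$, so
\[
Z(X,T,\phi,n+2M,\eta) \ge \sum_{x \in S} e^{S_{n+2M}\phi(T^{-M}x)} \ge e^{-2M\|\phi\|_\infty} \sum_{x \in S} e^{S_n\phi(x)}.
\]
Taking the supremum over $(n,\epsilon)$-separated $S$ gives $Z(X,T,\phi,n+2M,\eta) \ge e^{-2M\|\phi\|_\infty}\, Z(X,T,\phi,n,\epsilon)$. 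Taking logarithms, dividing by $n$, and using $\lim_{n} \frac{n+2M}{n} = 1$ together with existence of the limits defining the pressures then yields $P(X,T,\phi,\eta) \ge P(X,T,\phi,\epsilon)$, which completes the argument.

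I expect the only genuine obstacle to be the finite-window estimate, and even there the difficulty is mild: it is just the standard compactness packaging of the expansiveness hypothesis. Everything else is bookkeeping with separated sets and the uniform bound $\|\phi\|_\infty$ on the potential.
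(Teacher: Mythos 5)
Your proof is correct. The paper does not actually prove Lemma~\ref{presscaleindep} --- it is quoted from Ruelle without argument --- so there is no in-paper proof to compare against; your write-up is the standard one, and its two ingredients (the compactness-derived ``uniform expansivity'' window $M$ such that $d(T^kx,T^ky)\le\eta$ for $|k|\le M$ forces $d(x,y)\le\epsilon$, and the trick of replacing an $(n,\epsilon)$-separated set by its image under $T^{-M}$ to get an $(n+2M,\eta)$-separated set at a cost of $e^{-2M\|\phi\|_\infty}$ in the partition function) are exactly the devices the paper itself deploys in Lemmas~\ref{gapscaleindep}, \ref{sumscaleindep}, and \ref{sepscaleindep}. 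The only point worth making explicit is that $T^{-M}$ is injective, so the sum over $S$ transfers to a sum over the $(n+2M,\eta)$-separated set $T^{-M}S$ without collisions; everything else, including the passage from the inequality $Z(X,T,\phi,n+2M,\eta)\ge e^{-2M\|\phi\|_\infty}Z(X,T,\phi,n,\epsilon)$ to $P(X,T,\phi,\eta)\ge P(X,T,\phi,\epsilon)$, is routine.
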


We also need some definitions from measure-theoretic dynamics. All measures considered in this paper will be $T$-invariant Borel probability measures on $X$ for $(X,T)$ an expansive dynamical system, and we denote the space of such measures by $\mathcal{M}(X,T)$. 

\begin{definition}
A measure $\mu$ on $A^{\mathbb{Z}}$ is {\bf ergodic} if any measurable set $C$ which is invariant, i.e. $\mu(C \bigtriangleup TC) = 0$, has measure $0$ or $1$. 
\end{definition}

Not all $T$-invariant measures are ergodic, but a well-known result called the ergodic decomposition shows that any non-ergodic measure can be written as a ``weighted average'' (formally, an integral) of ergodic measures. Also, whenever ergodic measures $\mu$ and $\nu$ are unequal, in fact they must be mutually singular (written $\mu \perp \nu$), i.e. there must exist a set $R$ with $\mu(R) = \nu(R^c) = 0$. (See Chapter 6 of \cite{walters} for proofs and more information.)

When a measure $\mu$ is ergodic and $f \in L^1(\mu)$, the ergodic averages $\displaystyle \frac{1}{N} \sum_{i=0}^{N-1} f(T^i x)$ converge $\mu$-a.e. to the ``correct'' value $\int f \ d\mu$; this is essentially the content of Birkhoff's ergodic theorem. We will need the following related result, which deals with the supremum of such averages rather than their limit.

\begin{theorem}[\cite{maximal}, Theorem 2 (Maximal Ergodic Theorem)]\label{maxerg}
For $f \in L^1(\mu)$, define $M^+f := \sup_{N \in \mathbb{N}} \frac{1}{N} \sum_{i=0}^{N-1} f(T^i x)$. Then for any $\lambda \in \mathbb{R}$,
\[
\lambda \mu(\{M^+f(x) > \lambda\}) \leq \int_{M^+f(x) > \lambda} f \ d\mu.
\]
\end{theorem}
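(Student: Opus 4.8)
The plan is to reduce to the case $\lambda = 0$ and then run the classical argument of Garsia. First I would observe that it suffices to treat $\lambda = 0$: applying that case to $f - \lambda \in L^1(\mu)$ in place of $f$, and using that $\frac1N\sum_{i=0}^{N-1}(f-\lambda)(T^ix) = \frac1N S_N f(x) - \lambda$ (where $S_N f := \sum_{i=0}^{N-1} f\circ T^i$), so that $M^+(f-\lambda) = M^+ f - \lambda$ and hence $\{x : M^+(f-\lambda)(x) > 0\} = \{x : M^+ f(x) > \lambda\}$, the general inequality drops out. It therefore remains to prove $\int_E f\, d\mu \ge 0$, where $E := \{x : M^+ f(x) > 0\} = \{x : \sup_{N \ge 1} S_N f(x) > 0\}$ (the two descriptions agree because every index $N$ is positive).

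Next I would introduce Garsia's auxiliary functions: with $S_0 f := 0$, put $F_N := \max_{0 \le k \le N} S_k f$ for $N \ge 0$. Each $F_N$ is nonnegative, dominated by $\sum_{i=0}^{N-1}|f\circ T^i| \in L^1(\mu)$, the sequence $(F_N)_{N\ge 0}$ is nondecreasing, and $\bigcup_{N\ge 0}\{F_N > 0\} = E$. The key pointwise estimate is that, for every $N \ge 1$ and every $1 \le k \le N$,
\[
S_k f(x) = f(x) + S_{k-1} f(Tx) \le f(x) + F_{N-1}(Tx) \le f(x) + F_N(Tx) .
\]
Taking the maximum over $1 \le k \le N$ yields $\max_{1 \le k \le N} S_k f \le f + F_N\circ T$; and on $\{F_N > 0\}$ the left-hand side equals $F_N$ itself, because there the maximum defining $F_N$ is not attained at $k = 0$ (where $S_0 f = 0$). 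Hence $f \ge F_N - F_N\circ T$ on $\{F_N > 0\}$.

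I would then integrate this inequality over $\{F_N > 0\}$, using that $F_N \ge 0$ vanishes off $\{F_N > 0\}$, that $F_N\circ T \ge 0$, and that $\mu$ is $T$-invariant:
\[
\int_{\{F_N > 0\}} f\, d\mu \ \ge\ \int_{\{F_N > 0\}} (F_N - F_N\circ T)\, d\mu \ =\ \int_X F_N\, d\mu - \int_{\{F_N > 0\}} F_N\circ T\, d\mu \ \ge\ \int_X F_N\, d\mu - \int_X F_N\circ T\, d\mu \ =\ 0 .
\]
Finally, since $\{F_N > 0\} \uparrow E$ and $|f\cdot\mathbf{1}_{\{F_N > 0\}}| \le |f| \in L^1(\mu)$, the dominated convergence theorem gives $\int_E f\, d\mu = \lim_{N\to\infty}\int_{\{F_N > 0\}} f\, d\mu \ge 0$, as required.

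The one genuinely inventive step is the second one: guessing the right maximal functions $F_N$ and spotting the near-coboundary bound $f \ge F_N - F_N\circ T$ on $\{F_N > 0\}$, which is precisely what makes $T$-invariance of $\mu$ usable. The reduction to $\lambda = 0$, the integration, and the passage to the limit are all routine. Since this is a classical theorem, one could instead simply appeal to the cited reference \cite{maximal}; the sketch above is recorded only for completeness, and I anticipate no real difficulty beyond careful bookkeeping of the maxima.
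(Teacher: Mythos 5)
The paper does not prove this statement at all; it is quoted verbatim from the cited reference, so there is no in-paper argument to compare against. Your proof is the standard Garsia argument for the maximal ergodic theorem and is correct and complete: the reduction to $\lambda=0$ via $M^+(f-\lambda)=M^+f-\lambda$, the near-coboundary bound $f\ge F_N-F_N\circ T$ on $\{F_N>0\}$ (valid because the maximum defining $F_N$ is attained at some $k\ge 1$ there), the use of $T$-invariance together with $F_N\ge 0$, and the passage to the limit by dominated convergence are all handled correctly.
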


The following corollary is immediate.

\begin{corollary}\label{maxergcor}
For nonnegative $f \in L^1(\mu)$ and $Mf$ as in Theorem~\ref{maxerg}, and any $\lambda \in \mathbb{R}$,
\[
\mu(\{M^+f(x) \leq \lambda\}) \geq 1 - \frac{\|f\|_1}{\lambda}.
\]
\end{corollary}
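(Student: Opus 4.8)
The plan is to apply Theorem~\ref{maxerg} directly and then take complements. Fix $\lambda \in \mathbb{R}$; we may assume $\lambda > 0$, since otherwise the claimed bound $1 - \|f\|_1/\lambda$ is non-positive (or the statement is vacuous when $\lambda = 0$) and there is nothing to prove, as $\mu$ is a probability measure.

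First I would invoke the Maximal Ergodic Theorem to get $\lambda\,\mu(\{M^+f(x) > \lambda\}) \leq \int_{\{M^+f(x) > \lambda\}} f\, d\mu$. Next, using that $f \geq 0$, I would bound the right-hand side by the integral over all of $X$, namely $\int_X f\, d\mu = \|f\|_1$, since enlarging the domain of integration of a nonnegative function only increases the integral. Dividing by $\lambda > 0$ then yields $\mu(\{M^+f(x) > \lambda\}) \leq \|f\|_1/\lambda$.

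Finally, since $\mu$ is a probability measure, $\mu(\{M^+f(x) \leq \lambda\}) = 1 - \mu(\{M^+f(x) > \lambda\}) \geq 1 - \|f\|_1/\lambda$, which is the desired inequality. There is no real obstacle here; the only points requiring any care are the harmless reduction to $\lambda > 0$ and the use of nonnegativity of $f$ to pass from the truncated integral to $\|f\|_1$.
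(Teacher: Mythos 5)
Your argument is correct and is exactly the paper's intended one: the paper simply declares the corollary ``immediate'' from Theorem~\ref{maxerg}, and the immediate argument is precisely your chain --- bound the truncated integral by $\|f\|_1$ using $f \geq 0$, divide by $\lambda$, and pass to the complement. One small slip in your reduction: for $\lambda < 0$ the quantity $1 - \|f\|_1/\lambda$ is not non-positive but is $\geq 1$ (since $\|f\|_1/\lambda \leq 0$), so the corollary as literally stated actually fails there whenever $\|f\|_1 > 0$ (as $M^+f \geq f \geq 0$ forces $\mu(\{M^+f \leq \lambda\}) = 0$); this is harmless since the paper only ever applies the corollary with $\lambda > 0$, but your stated justification for dismissing that case is wrong.
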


We note that by considering $T^{-1}$ instead, both of these results also hold when $M^+f$ is replaced by $M^-f := \sup_{N \in \mathbb{N}} \frac{1}{N} \sum_{i=0}^{N-1} f(T^{-i} x)$.

We also need concepts from measure-theoretic entropy/pressure; for more information/proofs, see \cite{walters}.

\begin{definition}\label{ent1}
For any $\mu \in \mathcal{M}(X,T)$, and finite measurable partition $\mathcal{P}$ of $X$, the \textbf{information of $\mathcal{P}$ with respect to $(X,T,\mu)$} is
\[
H(X, T, \mu, \mathcal{P}) := \sum_{A \in \mathcal{P}} -\mu(A) \ln \mu(A),
\]
where terms with $\mu(A) = 0$ are omitted from the sum.
\end{definition}

\begin{definition}\label{ent2}
For any $\mu \in \mathcal{M}(X,T)$ and finite measurable partition $\mathcal{P}$ of $X$, the \textbf{entropy of $\mathcal{P}$ with respect to $(X,T,\mu)$} is
\[
h(X, T, \mu, \mathcal{P}) := \lim_{n \rightarrow \infty} \frac{H\left(X, T, \mu, \bigvee_{i=0}^{n-1} T^i \mathcal{P}\right)}{n}.
\]
\end{definition}
Note that by subadditivity, it is always true that $H\left(X, T, \mu, \bigvee_{i=0}^{n-1} T^i \mathcal{P}\right) \geq nh(X, T, \mu, \mathcal{P})$.

\begin{definition}\label{ent3}
For any $\mu \in \mathcal{M}(X,T)$, the \textbf{entropy of $(X,T,\mu)$} is
\[
h(X, T, \mu) := \sup_{\mathcal{P}} h(X, T, \mu, \mathcal{P}).
\]
\end{definition}

\begin{definition}
We say that $\mu \in \mathcal{M}(X,T)$ has the \textbf{K-property} if for every partition $\mathcal{P}$
consisting of nonempty sets, $h(X, T, \mu, \mathcal{P}) > 0$. 
\end{definition}
Note that any $\mu \in \mathcal{M}(X,T)$ with the K-property trivially has $h(X, T, \mu) > 0$.

\begin{definition}
A partition $\mathcal{P}$ is a \textbf{generating partition} for $(X,T,\mu)$ if $\bigvee_{i \in \mathbb{Z}} T^i \mathcal{P}$ separates $\mu$-a.e. points of $X$.
\end{definition}

\begin{theorem}
If $\mathcal{P}$ is a generating partition, then $h(X, T, \mu) = h(X, T, \mu, \mathcal{P})$.
\end{theorem}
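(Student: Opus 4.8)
The inequality $h(X,T,\mu) \geq h(X,T,\mu,\mathcal{P})$ is immediate from Definition~\ref{ent3}, so the real content is the reverse inequality, and the plan is the classical Kolmogorov--Sinai argument: I will show that $h(X,T,\mu,\mathcal{Q}) \leq h(X,T,\mu,\mathcal{P})$ for \emph{every} finite measurable partition $\mathcal{Q}$, and then take the supremum over $\mathcal{Q}$.

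Fix such a $\mathcal{Q}$ and an $\epsilon > 0$. The two standard ingredients I would invoke (all developed in Chapter 4 of \cite{walters}) are: (i) the monotonicity and conditional-entropy estimates $h(X,T,\mu,\mathcal{Q}) \leq h(X,T,\mu,\mathcal{R})$ whenever $\mathcal{R}$ refines $\mathcal{Q}$, and $h(X,T,\mu,\mathcal{Q}) \leq h(X,T,\mu,\mathcal{R}) + H(\mathcal{Q}\mid\mathcal{R})$ for any finite partitions $\mathcal{Q},\mathcal{R}$, where $H(\mathcal{Q}\mid\mathcal{R})$ denotes conditional information; and (ii) the fact that for the finite partition $\mathcal{P}_N := \bigvee_{i=-N}^{N} T^i\mathcal{P}$ one has $h(X,T,\mu,\mathcal{P}_N) = h(X,T,\mu,\mathcal{P})$. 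The latter follows because $\bigvee_{i=0}^{n-1}T^i\mathcal{P}_N = T^{-N}\bigvee_{j=0}^{n+2N-1}T^j\mathcal{P}$ and $H$ is $T$-invariant, so dividing by $n$ and passing to the limit in Definition~\ref{ent2} yields the same value for both partitions (here the homeomorphism property of $T$ is used, and is available since $(X,T)$ is a dynamical system in the sense of the paper).

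Next I would use the generating hypothesis: since $\bigvee_{i\in\mathbb{Z}}T^i\mathcal{P}$ separates $\mu$-a.e.\ points, it generates the Borel $\sigma$-algebra modulo $\mu$-null sets, so the algebra $\bigcup_{N\geq 0}\sigma(\mathcal{P}_N)$ is dense in that $\sigma$-algebra in the pseudometric $d_\mu(A,B)=\mu(A\triangle B)$. By the standard approximation lemma for entropy (continuity of $\mathcal{R}\mapsto H(\mathcal{Q}\mid\mathcal{R})$ in the Rokhlin metric), one can therefore choose $N$ and a finite partition $\mathcal{Q}' \preceq \mathcal{P}_N$ with $H(\mathcal{Q}\mid\mathcal{Q}') < \epsilon$. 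Chaining the estimates, $h(X,T,\mu,\mathcal{Q}) \leq h(X,T,\mu,\mathcal{Q}') + \epsilon \leq h(X,T,\mu,\mathcal{P}_N) + \epsilon = h(X,T,\mu,\mathcal{P}) + \epsilon$. Letting $\epsilon \to 0$ and taking the supremum over $\mathcal{Q}$ gives $h(X,T,\mu) \leq h(X,T,\mu,\mathcal{P})$.

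The main obstacle is bookkeeping rather than conceptual: the proof leans on conditional information/entropy and the Rokhlin-metric continuity of $\mathcal{R}\mapsto H(\mathcal{Q}\mid\mathcal{R})$, neither of which was set up in Section~\ref{defs}. Since these are entirely standard, the cleanest route in a paper of this kind is to cite \cite{walters} for the inequalities and the approximation lemma and supply only the short deduction above; the one point worth keeping an eye on is the identity $h(X,T,\mu,\mathcal{P}_N) = h(X,T,\mu,\mathcal{P})$ for the two-sided join, which is exactly where invertibility of $T$ enters.
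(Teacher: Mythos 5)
Your proof is correct; this is the Kolmogorov--Sinai generator theorem, and your argument (reduce to $h(X,T,\mu,\mathcal{Q})\leq h(X,T,\mu,\mathcal{P})$ for arbitrary finite $\mathcal{Q}$ via the conditional-entropy inequality, the identity $h(X,T,\mu,\bigvee_{i=-N}^{N}T^i\mathcal{P})=h(X,T,\mu,\mathcal{P})$ for invertible $T$, and approximation of $\mathcal{Q}$ in the Rokhlin metric by partitions measurable with respect to finite joins of the generator) is exactly the standard one. The paper does not supply its own proof --- it states this as background and defers to \cite{walters} --- so there is nothing to compare against; your sketch matches the textbook argument the paper is implicitly invoking, and your closing observation about where invertibility enters is the right point to flag.
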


By expansivity, any partition $\mathcal{P}$ of sets whose diameters are all less than $\delta$ is a generating partition for all $\mu$, and so we have the following fact:
 
\begin{lemma}
If $(X,T)$ is expansive with expansivity constant $\delta$ and $\mathcal{P}$ consists of sets whose diameters are all less than $\delta$, then $h(X, T, \mu) = h(X, T, \mu, \mathcal{P})$ for any measure $\mu \in \mathcal{M}(X,T)$.
\end{lemma}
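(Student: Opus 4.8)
The plan is to deduce this directly from the preceding theorem (that a generating partition computes the measure-theoretic entropy), by checking that any partition $\mathcal{P}$ all of whose cells have diameter less than $\delta$ is a generating partition for \emph{every} $\mu \in \mathcal{M}(X,T)$. In fact I would establish the slightly stronger statement that $\bigvee_{i \in \mathbb{Z}} T^i \mathcal{P}$ separates \emph{all} pairs of distinct points of $X$, from which separation $\mu$-a.e.\ is immediate for any $\mu$.

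To see the separation, fix $x \neq y$ in $X$. By expansivity with constant $\delta$, there exists $n \in \mathbb{Z}$ with $d(T^n x, T^n y) > \delta$. Since every cell of $\mathcal{P}$ has diameter strictly less than $\delta$, the two points $T^n x$ and $T^n y$ cannot belong to a common cell of $\mathcal{P}$; equivalently, $x$ and $y$ lie in distinct cells of $T^{-n}\mathcal{P}$, and hence in distinct atoms of the common refinement $\bigvee_{i \in \mathbb{Z}} T^i \mathcal{P}$. As $x$ and $y$ were arbitrary distinct points, $\mathcal{P}$ is a generating partition for $(X,T,\mu)$ for each $\mu \in \mathcal{M}(X,T)$.

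Applying the preceding theorem to this generating partition then gives $h(X,T,\mu) = h(X,T,\mu,\mathcal{P})$, as desired. There is no substantive obstacle here; the only point meriting a moment's attention is that the strict inequalities line up correctly (cells of diameter $< \delta$ against the expansive separation $d(\cdot,\cdot) > \delta$), which is exactly what forces distinct points into distinct cells after applying the appropriate iterate of $T$.
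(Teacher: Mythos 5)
Your proof is correct and is exactly the argument the paper intends: it states without proof that any partition into sets of diameter less than $\delta$ is generating by expansivity, and then invokes the preceding theorem, which is precisely what you do (with the details of the separation argument filled in). No issues.
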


%In Definition \ref{ent3}, a standard subadditivity argument shows that the limit can be replaced by an infimum; i.e. for any $n$,
%\begin{equation}\label{subadd} 
%h(\mu) \leq \frac{-1}{n} \sum_{A \in \mathcal{P}} \mu(A) \ln \mu(A).
%\end{equation}

The relationship between topological pressure and measure-theoretic entropy is given by the following Variational Principle:

\begin{theorem}[\cite{ruelle}]
For any dynamical system $(X,T)$ and continuous $\phi$,
\[
P(X, T, \phi) = \sup_{\mu} h(X, T, \mu) + \int \phi \ d\mu.
\]
\end{theorem}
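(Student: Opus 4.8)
The plan is to prove the two inequalities in the Variational Principle separately, following Ruelle's original argument (see also Chapter 9 of \cite{walters}). Since the intended applications are to expansive systems, fix a scale $\eta < \delta$ once and for all; by Lemma~\ref{presscaleindep} it suffices to prove the identity with $P(X,T,\phi)$ replaced by $P(X,T,\phi,\eta)$, which removes one limiting step (for a general, possibly non-expansive, system one simply carries an extra $\eta \to 0$ passage at the very end). The elementary input used throughout is the convexity bound $\sum_i p_i(a_i - \ln p_i) \le \ln \sum_i e^{a_i}$, valid for $p_i \ge 0$ with $\sum_i p_i = 1$, which follows from Jensen's inequality applied to $\ln$.

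For the inequality $P(X,T,\phi,\eta) \ge h(X,T,\mu) + \int \phi\,d\mu$ with $\mu \in \mathcal{M}(X,T)$ arbitrary, I would fix a finite partition $\mathcal{P}$ with $\mathrm{diam}(\mathcal{P}) < \delta$, so that $h(X,T,\mu) = h(X,T,\mu,\mathcal{P})$ by the lemma above. Writing $\{A_i\}$ for the atoms of $\bigvee_{i=0}^{n-1} T^{-i}\mathcal{P}$ and $a_i := \sup_{x \in A_i} S_n\phi(x)$, choose $x_i \in A_i$ with $S_n\phi(x_i) \ge a_i - \epsilon n$ (uniform continuity of $\phi$). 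Applying the convexity bound with $p_i = \mu(A_i)$ gives $H\big(X,T,\mu,\bigvee_{i=0}^{n-1}T^{-i}\mathcal{P}\big) + \int S_n\phi\,d\mu \le \ln\sum_i e^{a_i} \le \epsilon n + \ln \sum_i e^{S_n\phi(x_i)}$. After the standard regularity adjustment of replacing the atoms of $\mathcal{P}$ by compact subsets of positive mutual distance (so that the selected points genuinely form an $(n,\eta)$-separated set, at the cost of a controlled entropy error), the last sum is $\le Z(X,T,\phi,n,\eta)$. Dividing by $n$, letting $n \to \infty$, and then $\epsilon \to 0$, yields the claim.

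For the reverse inequality, I would pick for each $n$ an $(n,\eta)$-separated set $E_n$ with $W_n := \sum_{x \in E_n} e^{S_n\phi(x)} \ge \tfrac12 Z(X,T,\phi,n,\eta)$, form $\sigma_n := W_n^{-1}\sum_{x\in E_n} e^{S_n\phi(x)}\delta_x$, and average $\mu_n := \tfrac1n \sum_{k=0}^{n-1} (T^k)_*\sigma_n$. By weak-$*$ compactness, pass to a subsequence $n_j$ along which $\mu_{n_j} \to \mu$ and $\tfrac1{n_j}\ln W_{n_j} \to P(X,T,\phi,\eta)$; a routine estimate shows $\mu \in \mathcal{M}(X,T)$. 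Fix a partition $\mathcal{P}$ with $\mathrm{diam}(\mathcal{P}) < \delta$ and $\mu(\partial A) = 0$ for each atom $A$ (available since $\mu$ is a finite Borel measure on a compact metric space: take atoms built from balls of generic radii). The crux is the chain: $H\big(X,T,\sigma_n,\bigvee_{i=0}^{n-1}T^{-i}\mathcal{P}\big) + \int S_n\phi\,d\sigma_n = \ln W_n$ exactly (distinct points of $E_n$ lie in distinct atoms, again after the compact-subset adjustment); then, for a fixed block length $q$, split $\{0,\dots,n-1\}$ into arithmetic progressions of step $q$, use subadditivity of $H_\mu(\cdot)$ over partitions and concavity of $\nu \mapsto H_\nu$ to pass from $\sigma_n$ and its shifts to $\mu_n$, and let $n = n_j \to \infty$ using $\mu(\partial A)=0$ to obtain $\tfrac1q H\big(X,T,\mu,\bigvee_{i=0}^{q-1}T^{-i}\mathcal{P}\big) + \int\phi\,d\mu \ge P(X,T,\phi,\eta) - o(1)$; finally $q \to \infty$ gives $h(X,T,\mu,\mathcal{P}) + \int\phi\,d\mu \ge P(X,T,\phi,\eta)$, and $h(X,T,\mu,\mathcal{P}) = h(X,T,\mu)$ since $\mathrm{diam}(\mathcal{P}) < \delta$.

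The main obstacle is precisely this last combinatorial/entropy estimate in the reverse inequality: descending from the length-$n$ quantity $\ln W_n$ to the length-$q$ entropy of the limit measure requires juggling, in the right order, the averaging over $T^k$ that makes $\mu$ invariant, subadditivity of $H_\mu$ to break length $n$ into $\lfloor n/q \rfloor$ blocks of length $q$ plus a negligible tail, concavity of $\nu \mapsto H_\nu(\mathcal{Q})$ to replace $\sigma_n$ by $\mu_n$, and the $\mu(\partial A) = 0$ hypothesis so that weak-$*$ convergence forces $H_{\mu_{n_j}}(\mathcal{Q}) \to H_\mu(\mathcal{Q})$ for the finitely many partitions $\mathcal{Q}$ in play; the double limit ($n_j \to \infty$ before $q \to \infty$) and the bookkeeping of boundary/tail terms are where care is essential. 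Everything else — the forward inequality, weak-$*$ compactness, and the invariance of $\mu$ — is comparatively routine.
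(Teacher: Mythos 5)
This theorem is quoted from \cite{ruelle} and the paper supplies no proof of its own, so there is nothing internal to compare against; your outline is the standard Walters/Misiurewicz argument (forward inequality via the convexity bound and a compact-refinement of a small-diameter partition, reverse inequality via weighted point masses on near-maximal separated sets, Ces\`aro averaging, and the block-of-length-$q$ entropy estimate) and is correct as a sketch. The one detail worth tightening is that the exact identity $H\bigl(X,T,\sigma_n,\bigvee_{i=0}^{n-1}T^{-i}\mathcal{P}\bigr)+\int S_n\phi\,d\sigma_n=\ln W_n$ needs $\mathrm{diam}(\mathcal{P})\le\eta$ (not merely $<\delta$), since that is what forces distinct points of the $(n,\eta)$-separated set $E_n$ into distinct atoms of the join.
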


\begin{definition}
For any $(X,T)$, an \textbf{equilibrium state} for $(X,T)$ and $\phi$ is a measure $\mu$ on $X$ for which $P(X, T, \phi) = h(X, T, \mu) + \int \phi \ d\mu$.
\end{definition}

\begin{theorem}
If $(X,T)$ is expansive, then the entropy map $\mu \mapsto h(X, T, \mu)$ is upper semi-continuous.
\end{theorem}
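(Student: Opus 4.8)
The plan is to reduce the statement to weak$^*$ continuity of the finite-level entropy functionals, using expansiveness to fix a \emph{single} generating partition that works at a scale bounded away from $0$. Fix $\mu \in \mathcal{M}(X,T)$ and $\epsilon > 0$; the goal is to produce a neighborhood $U$ of $\mu$ with $h(X,T,\nu) < h(X,T,\mu) + \epsilon$ for all $\nu \in U$.

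First I would construct a finite partition $\mathcal{P}$ with two properties: every cell has diameter less than the expansivity constant $\delta$, and $\mu(\partial P) = 0$ for every $P \in \mathcal{P}$. For the latter, note that for each $x \in X$ all but countably many radii $r < \delta/3$ satisfy $\mu(\partial B(x,r)) = 0$; pick such an $r_x$ for each $x$, extract a finite subcover of $X$ by the balls $B(x, r_x)$ using compactness, and disjointify it into a partition. The boundaries of the resulting cells lie in finite unions of the sets $\partial B(x, r_x)$ and hence remain $\mu$-null, while the diameters stay below $2\delta/3 < \delta$. Since $\mathcal{P}$ consists of cells of diameter less than $\delta$, the lemma preceding the Variational Principle gives $h(X,T,\nu) = h(X,T,\nu,\mathcal{P})$ for every $\nu \in \mathcal{M}(X,T)$, in particular for $\mu$ and for every $\nu$ near it.

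Next I would use subadditivity. Writing $\mathcal{P}^{(n)} := \bigvee_{i=0}^{n-1} T^i \mathcal{P}$, invariance of $\nu$ makes $n \mapsto H(X,T,\nu,\mathcal{P}^{(n)})$ subadditive, so $h(X,T,\nu,\mathcal{P}) = \inf_n \frac{1}{n} H(X,T,\nu,\mathcal{P}^{(n)})$; in particular $h(X,T,\nu,\mathcal{P}) \le \frac{1}{n} H(X,T,\nu,\mathcal{P}^{(n)})$ for every $n$. Choose $n$ so large that $\frac{1}{n} H(X,T,\mu,\mathcal{P}^{(n)}) < h(X,T,\mu) + \epsilon/2$. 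Because $T$ is a homeomorphism and $\mu$ is invariant, $\mu(\partial(T^i P)) = \mu(\partial P) = 0$, and since the boundary of an intersection is contained in the union of the boundaries, every cell $Q$ of $\mathcal{P}^{(n)}$ has $\mu(\partial Q) = 0$. Hence $\nu \mapsto \nu(Q)$ is continuous at $\mu$ in the weak$^*$ topology for each such $Q$, and since $t \mapsto -t \ln t$ is continuous, the map $\nu \mapsto \frac{1}{n} H(X,T,\nu,\mathcal{P}^{(n)})$ is continuous at $\mu$. Therefore there is a neighborhood $U$ of $\mu$ on which $\frac{1}{n} H(X,T,\nu,\mathcal{P}^{(n)}) < h(X,T,\mu) + \epsilon$, and then $h(X,T,\nu) = h(X,T,\nu,\mathcal{P}) \le \frac{1}{n} H(X,T,\nu,\mathcal{P}^{(n)}) < h(X,T,\mu) + \epsilon$, as desired.

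The only genuine obstacle is the first step: one needs a single partition that is simultaneously generating for $\mu$ (which forces its cells below the scale $\delta$) and has $\mu$-null boundaries (needed for weak$^*$ continuity of the finite-level entropies). Expansiveness is exactly what makes this possible, since the scale $\delta$ is fixed rather than being forced to shrink to zero; this is precisely the point at which the argument, and the conclusion itself, breaks down for general non-expansive systems.
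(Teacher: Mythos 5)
Your proof is correct and complete. The paper states this theorem without proof as a standard background fact (it is essentially Theorem 8.2 of Walters together with the expansivity remark), and your argument --- a single generating partition of diameter less than $\delta$ with $\mu$-null cell boundaries, weak$^*$ continuity of the finite-level entropies $\nu \mapsto \frac{1}{n}H(X,T,\nu,\mathcal{P}^{(n)})$ at $\mu$, and subadditivity to pass to the infimum --- is exactly the standard route, so there is nothing to compare against in the paper itself.
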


As a corollary, if $(X,T)$ is expansive and $\phi$ is continuous, then it has an equilibrium state; the upper semi-continuous function $\mu \mapsto h(X, T, \mu) + \int \phi \ d\mu$ must achieve its supremum $P(X, T, \phi)$ on the compact space $\mathcal{M}(X,T)$ (endowed with the weak-$*$ topology). In fact, the ergodic decomposition, along with the fact that the entropy map is affine (\cite{walters}, Theorem 8.1), implies that the extreme points of the simplex of equilibrium states are precisely the ergodic equilibrium states. In particular, any $(X,T,\phi)$ with multiple equilibrium states also has multiple ergodic equilibrium states. 

\begin{definition}[\cite{IRRL}]
A potential $\phi$ on $(X,T)$ is called \textbf{hyperbolic} if every equilibrium state $\mu$ has $h(X, T, \mu) > 0$.
\end{definition}

\begin{remark}
Though this is not the original definition from \cite{IRRL}, it was shown to be an equivalent one in their Proposition 3.1.
\end{remark}

Our remaining definitions pertain to the hypotheses used in Theorems~\ref{mainthm} and 
\ref{mainthm2}. The first relates to the potential $\phi$.

\begin{definition}
Given a dynamical system $(X,T)$ and a potential $\phi$, the \textbf{partial sum variations of $\phi$ at scale $\eta$} are given by
\[
V(X, T, \phi, n, \eta) = \max_{\{(x,y) \ : \ \forall 0 \leq i < n, \ d(T^i x, T^i y) < \eta\}} |S_n \phi(x) - S_n \phi(y)|.
\]
We say that $\phi$ has \textbf{partial sum variation bounds $g(n)$ at scale $\eta$} if $g(n) \geq V(X, T, \phi, i, \eta)$ whenever $i \leq n$.
\end{definition}

Our remaining definitions are for specification properties on $(X,T)$. We first need a general notion of shadowing.

\begin{definition}\label{shadow}
Given a dynamical system $(X,T)$, $\epsilon > 0$, points $z, x_1, \ldots, x_k \in X$, and integers 
$n_1$, $\ldots$, $n_k$, $m_1$, $\ldots$, $m_{k-1}$, we say that \textbf{$z$ $\eta$-shadows $(x_i)$ for $(n_i)$ iterates with gaps $(m_i)$} if for every 
$0 < i < k$ and $0 \leq m < n_i$, 
\[
d(T^{m + \sum_{j=1}^{i-1} (n_j + m_j)} z, T^m x_i) < \eta.
\]
\end{definition}

\begin{definition}\label{spec}
A dynamical system $(X,T)$ has \textbf{specification} if for any $\eta > 0$, there exists a constant $C(\eta)$ so that for any $k$, any points $x_1$, $\ldots$, $x_k \in X$, and any integers 
$n_1$, $\ldots$, $n_{k-1}$, $n_k$, $m_1$, $\ldots$, $m_{k}$ satisfying $m_i \geq C(\eta)$ when 
$0 < i < k$, there exists a point $z \in X$ which $\eta$-shadows $(x_i)$ for $(n_i)$ iterates with gaps $(m_i)$ and for which $T^{\sum_{j=1}^{k} (n_j + m_j)} z = z$.
\end{definition}

\begin{remark}
A related property in the literature is \textbf{weak specification}, which is identical to the definition above except that no periodicity of $z$ is assumed. For expansive $(X,T)$, this distinction is irrelevant; weak specification in fact implies specification (see \cite{kwietniaketal2}, Lemma 9).
\end{remark}

We now move to the mixing properties which we will consider in this work, both of which can be thought of as non-uniform generalizations of specification with no assumption of periodicity. 

\begin{definition}\label{nonunifdef}
For an increasing function $f: \mathbb{N} \rightarrow \mathbb{N}$, a dynamical system $(X,T)$ has \textbf{non-uniform specification with gap bounds $f(n)$ at scale $\eta$} if for any $k$, any points $x_1$, $x_2$, $\ldots$, $x_k \in X$, and any integers $n_1$, $\ldots$, $n_{k-1}$, $n_k$, $m_1$, $\ldots$, $m_{k-1}$ satisfying $m_i \geq \max(f(n_i), f(n_{i+1}))$, there exists a point $z \in X$ which $\eta$-shadows $(x_i)$ for $(n_i)$ iterates with gaps $(m_i)$.
\end{definition}

\begin{remark}
This property is almost the same as the main property used by Marcus in \cite{marcusmonat} (which was not there given a name). There are two differences: the first is that Marcus required $\frac{f(n)}{n} \rightarrow 0$ as part of his definition, and the second is that in Marcus's definition $m_i$ was only assumed greater than or equal to $f(n_i)$. Essentially, non-uniform specification only guarantees the ability to shadow when gaps are large enough in comparison to the lengths of orbit segments being shadowed before and after the gap, and Marcus's unnamed property requires only that gaps be large compared to the orbit segment before the gap.
\end{remark}

We also consider the following significantly weaker property of non-uniform transitivity, which is weaker than non-uniform specification in two important ways. The first is that it only guarantees the ability to shadow two orbit segments, rather than arbitrarily many, and the second is that it guarantees the existence of only a single gap length which allows for shadowing, rather than guaranteeing that all gaps above a certain threshold suffice. 

\begin{definition}
For an increasing function $f: \mathbb{N} \rightarrow \mathbb{N}$, a dynamical system $(X,T)$ has \textbf{non-uniform transitivity with gap bounds $f(n)$ at scale $\eta$} if for any $n$ and any points $x,y \in X$, there exists $i \leq f(n)$ and $z \in X$ which $\eta$-shadows 
$(x,y)$ for $(n,n)$ iterates with gap $i$. 
\end{definition}

\begin{remark}
We note that for $(X,T)$ with this property and any $n$ and $j,k \leq n$, one can also find $z$ which $\eta$-shadows $(x,y)$ for $(j,k)$ iterates with some gap $i \leq f(n)$. This is because one can instead start with the pair $(T^{-(n-j)} x, y)$, and for any $z$ which $\eta$-shadows $(T^{-(n-j)} x, y)$ for $(n,n)$ iterates with gap $i$, it is immediate that $T^{n-j} z$ $\eta$-shadows $(x,y)$ for $(j,k)$ iterates with gap $i$.
\end{remark}

%\begin{remark}
%For these properties, the assumption that $f(n)$ is nondecreasing is sometimes explicitly required in the literature. We have chosen not to impose that condition here largely because partial sum variations $g(n)$ for potentials are not necessarily monotone, and we'd like to keep $f$ and $g$ on somewhat equal footing. (REWRITE?)
%\end{remark}

%\begin{remark}
%In several works, including \cite{kwietniaketal} and \cite{yamamoto}, what we call non-uniform specification is referred to by other %names, such as almost weak specification or almost specification. 
%\end{remark}

\section{Preservation under factors, products, and changes of scale}\label{scaleindep}

In this section, we summarize some simple results illustrating preservation of various properties/quantities under expansive factors, products, and changes of scale.

The following results show that the hypotheses used for Theorems~\ref{mainthm} and \ref{mainthm2} do not depend on the scale used.

\begin{lemma}\label{gapscaleindep}
If $(X,T)$ is expansive (with expansivity constant $\delta$) and has non-uniform specification 
(transitivity) at scale $\delta$ with gap bounds $f(n)$, then for every $\eta < \delta$ there exists a constant $C = C(\eta)$ so that $(X,T)$ has non-uniform specification (transitivity) at scale $\eta$ with gap bounds $f(n + C) + C$.
\end{lemma}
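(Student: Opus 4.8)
The plan is to use expansivity to transfer shadowing at the large scale $\delta$ down to the small scale $\eta$, at the cost of a bounded adjustment in lengths and gaps. The key tool is the standard fact (a consequence of expansivity and compactness) that for any $\eta < \delta$ there is a constant $C = C(\eta) \in \mathbb{N}$ such that whenever two points $x, y$ satisfy $d(T^k x, T^k y) \le \delta$ for all $|k| \le C$, one has $d(x,y) < \eta$. (This follows because the sets $\{(x,y) : d(T^k x, T^k y) \le \delta \ \forall |k| \le C\}$ form a decreasing sequence of closed sets whose intersection, by expansivity at constant $\delta$, lies in the diagonal; compactness then forces one of them to lie inside the $\eta$-neighborhood of the diagonal.) I would first isolate this as the one input, then run the rest of the argument purely combinatorially.

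First I would treat the transitivity case, which is cleaner. Given $n$ and points $x, y$, I want a point $w$ that $\eta$-shadows $(x,y)$ for $(n,n)$ iterates with a small gap. The idea: apply non-uniform transitivity at scale $\delta$ to the ``padded'' orbit segments of length $n + 2C$ centered appropriately — concretely, to the points $T^{-C} x$ and $T^{-C} y$ for $(n+2C, n+2C)$ iterates. This yields $i \le f(n+2C)$ and $z$ which $\delta$-shadows $(T^{-C}x, T^{-C}y)$ for $(n+2C, n+2C)$ iterates with gap $i$. Now set $w = T^C z$. For each $m$ with $0 \le m < n$, the point $T^{m} \big(\text{the first block of } w\big)$ stays within $\delta$ of $T^{m+C}(T^{-C}x) = T^m x$ not just at time $m$ but at all times $m - C, \dots, m+C$ within the padded block (this is where the padding by $C$ on each side is used, so that the $C$-window around index $m$ stays inside the shadowed segment of length $n + 2C$). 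By the expansivity consequence above, this upgrades $d(\cdot, T^m x) \le \delta$ on the window to $d(\cdot, T^m x) < \eta$. The same works for the $y$-block. The resulting gap is $i + 2C \le f(n+2C) + 2C$. Taking $C' = 2C$ (or absorbing, $C' = \max(2C, \text{value needed for the length shift})$) gives gap bounds $f(n + C') + C'$ as claimed — I may need to slightly adjust the bookkeeping to land exactly on $f(n+C)+C$, e.g. by choosing $C$ at the outset large enough that $f(n+2C) + 2C \le f(n+C)+C$ fails is impossible; more honestly, one just renames the constant, since the statement only asserts existence of \emph{some} $C$.

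For the specification case the argument is the same but applied to all $k$ blocks simultaneously: given $x_1, \dots, x_k$ and lengths $n_1, \dots, n_k$ and gaps $m_i \ge \max(f(n_i + 2C) + \text{const}, \dots)$ required for scale $\eta$, I pad each segment to $T^{-C} x_i$ of length $n_i + 2C$, apply non-uniform specification at scale $\delta$ with the enlarged lengths and with gaps shrunk by $2C$ (which is why the hypothesis gaps must exceed $f$ evaluated at the shifted argument plus $2C$), obtain $z$ doing the $\delta$-shadowing, and set $w = T^C z$. The per-index expansivity upgrade then converts $\delta$-closeness on each $C$-window to $\eta$-closeness at the center. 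The new gap between block $i$ and block $i+1$ is the old gap plus $2C$, which is why the condition on $m_i$ at scale $\eta$ becomes $m_i \ge \max(f(n_i + C) + C, f(n_{i+1}+C)+C)$ after renaming constants. The main obstacle — really the only nontrivial point — is the passage from ``$\delta$-shadowing on a window'' to ``$\eta$-shadowing at the center,'' i.e. proving and correctly applying the expansivity/compactness lemma stated above; everything else is careful index arithmetic to make sure the $C$-windows around each shadowed coordinate stay within the padded blocks, which dictates the $+C$ shift in the argument of $f$ and the additive $C$ in the new gap bound.
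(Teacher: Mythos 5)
Your proposal is correct and follows essentially the same route as the paper: both extract the constant $C$ (the paper's $N$) from expansivity so that $\delta$-closeness on a window of radius $C$ forces $\eta$-closeness at the center, then apply the scale-$\delta$ property to the padded points $T^{-C}x_i$ with lengths $n_i+2C$ and gaps reduced by $2C$, and finally shift the resulting point by $T^{C}$. The bookkeeping you worry about at the end is harmless, since (as you note) the lemma only asserts existence of some constant, and $2C$ serves.
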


\begin{proof}
We present only the proof for non-uniform specification, as the one for non-uniform transitivity is extremely similar. Choose $(X,T)$ as in the theorem, and any $\eta < \delta$. By expansivity, there exists $N$ so that if $d(T^i x, T^i y) < \delta$ for $-N \leq i \leq N$, then $d(x,y) < \eta$. Now, choose any $k,n \in \mathbb{N}$, any $x_1, \ldots, x_k \in X$, and any $n_1, \ldots, n_{k-1} \geq f(n + 2N) + 2N$. Use non-uniform specification to choose $y \in X$ which $\delta$-shadows $(T^{-N} x_1, \ldots, T^{-N} x_k)$ for $(n + 2N, \ldots, n + 2N)$ iterates, with gaps $(n_1 - 2N, \ldots, n_{k-1} - 2N)$. Then by definition of $N$, $T^N y$ $\eta$-shadows $(x_1, \ldots, x_k)$ for $(n, \ldots, n)$ iterates, with gaps $(n_1, \ldots, n_{k-1})$, proving the desired non-uniform specification at scale $\eta$.
\end{proof}

\begin{lemma}\label{sumscaleindep}
If $(X, T)$ is expansive (with expansivity constant $\delta$), $\eta < \delta$, and $\phi$ is a potential with partial sum variation bounds $g(n)$ at scale $\eta$, then there exists a constant $D = D(\eta)$ so that for every $n$, $\phi$ has partial sum variation bounds $g(n) + D$ at scale $\eta$.
\end{lemma}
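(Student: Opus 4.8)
The plan is to bound the scale-$\delta$ partial sum variation in terms of the scale-$\eta$ one, up to a bounded additive error. (With the scales exactly as printed the conclusion is immediate, since $V(X,T,\phi,n,\cdot)$ is nondecreasing in the scale and one may take $D=0$; the substantive content, and what is needed for the ``equivalent at any scale'' remark, is the passage from the small scale $\eta$ up to the expansivity scale $\delta$, i.e.\ that partial sum variation bounds $g(n)$ at scale $\eta$ yield bounds $g(n)+D$ at scale $\delta$. That is what I would prove.)

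First I would use expansivity together with compactness of $X$ to extract $N=N(\eta)\in\mathbb{N}$ such that $d(T^ju,T^jv)<\delta$ for all $-N\le j\le N$ implies $d(u,v)<\eta$; this is exactly the $N$ appearing in the proof of Lemma~\ref{gapscaleindep}. Since $X$ is compact and $\phi$ is continuous, $\|\phi\|_\infty<\infty$, and I would set $D:=4N\|\phi\|_\infty$.

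Then, given any $n$ and any pair $(x,y)$ with $d(T^ix,T^iy)<\delta$ for all $0\le i<n$, I would split into cases. If $n\le 2N$ then $|S_n\phi(x)-S_n\phi(y)|\le 2n\|\phi\|_\infty\le D$ and we are done. If $n>2N$, then for each $N\le i<n-N$ every index $i+j$ with $|j|\le N$ lies in $[0,n)$, so $d(T^j(T^ix),T^j(T^iy))<\delta$ for $|j|\le N$, and hence $d(T^ix,T^iy)<\eta$ by the choice of $N$. Thus $T^Nx$ and $T^Ny$ are within $\eta$ for their first $n-2N$ iterates, so
\[
|S_{n-2N}\phi(T^Nx)-S_{n-2N}\phi(T^Ny)|\le V(X,T,\phi,n-2N,\eta)\le g(n),
\]
the last step by the definition of partial sum variation bounds (since $n-2N\le n$). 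Writing $S_n\phi(x)=S_N\phi(x)+S_{n-2N}\phi(T^Nx)+S_N\phi(T^{n-N}x)$ and likewise for $y$, and bounding each of the two end-block differences by $2N\|\phi\|_\infty$ in absolute value, the triangle inequality gives $|S_n\phi(x)-S_n\phi(y)|\le g(n)+4N\|\phi\|_\infty=g(n)+D$. Hence $V(X,T,\phi,n,\delta)\le g(n)+D$ for every $n$, and since $g$ is increasing also $V(X,T,\phi,i,\delta)\le g(i)+D\le g(n)+D$ for all $i\le n$, which is the asserted partial sum variation bound at scale $\delta$.

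I do not anticipate a genuine obstacle. The only non-formal ingredient is the expansivity/compactness extraction of $N(\eta)$, which is standard and is reused verbatim from Lemma~\ref{gapscaleindep}; everything else is bookkeeping, with the one mild point being the separate treatment of $n\le 2N$ so that the splitting into a length-$(n-2N)$ middle block and two length-$N$ end blocks is meaningful.
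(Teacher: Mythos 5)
Your proof is correct and is essentially identical to the paper's: the same expansivity constant $N(\eta)$, the same decomposition of $S_n\phi$ into a middle block of length $n-2N$ (controlled by $g$ at scale $\eta$) plus two end blocks of length $N$ (controlled by $\|\phi\|_\infty$), with only a cosmetically different constant $D$. You also correctly diagnose the typo in the statement (the conclusion should be at scale $\delta$, as the paper's own proof confirms by assuming $d(T^ix,T^iy)<\delta$), and your explicit treatment of the case $n\le 2N$ fills a small gap the paper leaves implicit.
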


\begin{proof}
Choose $(X,T)$ and $\phi$ as in the theorem, and any $\eta < \delta$. By expansivity, there exists $N$ so that if $d(T^i x, T^i y) < \delta$ for $-N \leq i \leq N$, then $d(x,y) < \eta$. For the second inequality, choose any $n > 2N$ and any $x,y$ for which $d(T^i x, T^i y) < \delta$ for $0 \leq i < n$. Then by definition of $N$, $d(T^i (T^N) x, T^i (T^N) y) < \eta$ for $0 \leq i < n - 2N$, and so
\begin{multline*}
|S_n \phi(x) - S_n \phi(y)| \leq 2N (\sup \phi - \inf \phi) + |S_{n - 2N} \phi(T^N x) - S_{n - 2N} \phi(T^N y)| \\
\leq 2N (\sup \phi - \inf \phi) + g(n-2N) \leq g(n) + 2N(\sup \phi - \inf \phi).
\end{multline*}

Taking the supremum over such pairs $(x,y)$ completes the proof. 

\end{proof}

%The following is then an immediate corollary of Proposition~\ref{hypfactor} and Lemma~\ref{gapscaleindep}.

\begin{proposition}\label{hypfactor}
If $f: (X,T) \rightarrow (Y,S)$ is a factor map, then for all $\epsilon$ there exists $\delta$ such that if $(X,T)$ has non-uniform specification (transitivity) with gap bounds $f(n)$ at scale $\delta$, then $(Y,S)$ has non-uniform specification (transitivity) with gap bounds $f(n)$ at scale $\epsilon$.
\end{proposition}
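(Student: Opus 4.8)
The plan is to exploit uniform continuity of the factor map $f$ to pull back a shadowing distance in $Y$ to a shadowing distance in $X$, apply the non-uniform specification (transitivity) hypothesis for $(X,T)$ at the appropriate scale $\delta$, and then push the shadowing point forward through $f$. First I would fix $\epsilon > 0$. Since $f : X \to Y$ is a continuous map between compact metric spaces, it is uniformly continuous, so there exists $\delta > 0$ such that $d_X(x,x') < \delta$ implies $d_Y(f(x), f(x')) < \epsilon$. This is the $\delta$ whose existence is asserted in the statement, and it depends only on $\epsilon$ (and on $f$), not on any orbit data.

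Next, suppose $(X,T)$ has non-uniform specification with gap bounds $f(n)$ at scale $\delta$. Given any $k$, any points $y_1, \ldots, y_k \in Y$, and any integers $n_1, \ldots, n_k$ and gaps $m_1, \ldots, m_{k-1}$ with $m_i \geq \max(f(n_i), f(n_{i+1}))$, I would choose preimages $x_j \in X$ with $f(x_j) = y_j$ (possible since a factor map is surjective). Applying non-uniform specification for $(X,T)$ at scale $\delta$ yields a point $z \in X$ which $\delta$-shadows $(x_j)$ for $(n_j)$ iterates with gaps $(m_j)$. Then for the relevant indices $0 < i < k$ (really $0 < i \le k$ with the conventions of Definition~\ref{shadow}) and $0 \le m < n_i$, writing $\ell_i = \sum_{j=1}^{i-1}(n_j + m_j)$, we have $d_X(T^{m + \ell_i} z, T^m x_i) < \delta$, hence by the choice of $\delta$,
\[
d_Y\bigl(S^{m + \ell_i} f(z),\, S^m y_i\bigr) = d_Y\bigl(f(T^{m+\ell_i} z),\, f(T^m x_i)\bigr) < \epsilon,
\]
using the intertwining relation $f \circ T = S \circ f$ and hence $f \circ T^n = S^n \circ f$. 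Thus $f(z) \in Y$ is a point which $\epsilon$-shadows $(y_i)$ for $(n_i)$ iterates with gaps $(m_i)$, which is exactly non-uniform specification for $(Y,S)$ at scale $\epsilon$ with the same gap bounds $f(n)$.

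For the non-uniform transitivity case the argument is identical but simpler: given $n$ and $y_1, y_2 \in Y$, lift to $x_1, x_2 \in X$, apply non-uniform transitivity for $(X,T)$ at scale $\delta$ to get $i \le f(n)$ and $z \in X$ which $\delta$-shadows $(x_1,x_2)$ for $(n,n)$ iterates with gap $i$, and then $f(z)$ $\epsilon$-shadows $(y_1, y_2)$ for $(n,n)$ iterates with gap $i$ by the same uniform-continuity estimate.

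I do not expect any serious obstacle here; the statement is a soft consequence of uniform continuity plus the semiconjugacy, and the only points requiring a modicum of care are (a) confirming that the gap bound function $f(n)$ is unchanged — which it is, since we lift orbit segments of the same lengths $n_i$ and use the very same gaps $m_i$ — and (b) keeping the index bookkeeping in Definition~\ref{shadow} straight when translating the shadowing condition from $X$ to $Y$. One should also note that expansivity of $(Y,S)$ is not needed for this particular preservation statement (it is relevant elsewhere, e.g. for the factor to inherit a useful expansivity constant), so no hypothesis beyond $f$ being a factor map is invoked.
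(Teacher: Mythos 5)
Your proof is correct and follows essentially the same route as the paper's: choose $\delta$ by uniform continuity of the factor map, lift the target points to $X$, apply the specification (or transitivity) hypothesis there, and push the shadowing point forward using the semiconjugacy. The extra details you supply (the intertwining relation $f\circ T^n = S^n\circ f$ and the observation that expansivity of $(Y,S)$ is not needed) are accurate but not points of divergence.
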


\begin{proof}
We give a proof only for non-uniform specification, as the proof for non-uniform transitivity is trivially similar. Given $\epsilon$, use uniform continuity of $f$ to choose $\delta$ so that $d(x,x') < \delta \Longrightarrow d(f(x), f(x')) < \epsilon$. Now, given any $y_1, \ldots, y_k \in Y$ and $n_1, \ldots, n_k$, $m_1$, $\ldots$, $m_{k-1}$, choose arbitrary $x_i \in f^{-1}(y_i)$ and use non-uniform specification of $(X,T)$ at scale $\delta$ to find $z \in X$ which $\delta$-shadows $(x_i)$ for $(n_i)$ iterates with gaps $(m_i)$. It is immediate that $f(z) \in Y$ $\epsilon$-shadows $(y_i)$ for $(n_i)$ iterates with gaps $(m_i)$, completing the proof.
\end{proof}

The following corollary follows immediately by using Lemma~\ref{gapscaleindep}.

\begin{corollary}\label{factorpres}
If $f: (X,T) \rightarrow (Y,S)$ is a factor map, $(X,T)$ and $(Y,S)$ are expansive 
(with expansivity constants $\delta$ and $\eta$ respectively), and $(X,T)$ has non-uniform specification 
(transitivity) with gap bounds $f(n)$ at scale $\delta$ where $\liminf_{n \rightarrow \infty} f(n)/\ln n = 0$ ($\lim_{n \rightarrow \infty} f(n)/\ln n = 0$), then there exists 
$\overline{f}(n)$ with $\liminf_{n \rightarrow \infty} \overline{f}(n)/\ln n = 0$ ($\lim_{n \rightarrow \infty} \overline{f}(n)/\ln n = 0$) for which $(Y,S)$ has non-uniform specification 
(transitivity) with gap bounds $\overline{f}(n)$ at scale $\eta$.
\end{corollary}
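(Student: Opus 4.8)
The plan is to chain Proposition~\ref{hypfactor} with Lemma~\ref{gapscaleindep}; as the paper's parenthetical remark indicates, there is essentially nothing more to it. I will describe the non-uniform specification case, the non-uniform transitivity case being word-for-word the same with ``$\liminf$'' replaced by ``$\lim$''.

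First I would apply Proposition~\ref{hypfactor} with $\epsilon := \eta$, the expansivity constant of $(Y,S)$, obtaining a scale $\theta > 0$ such that: whenever $(X,T)$ has non-uniform specification with some gap bounds at scale $\theta$, $(Y,S)$ has non-uniform specification with the same gap bounds at scale $\eta$. So it suffices to produce gap bounds $\overline{f}(n)$ with $\liminf_{n\to\infty}\overline{f}(n)/\ln n = 0$ for which $(X,T)$ has non-uniform specification at scale $\theta$. There are two cases. If $\theta \geq \delta$, then $\delta$-shadowing is in particular $\theta$-shadowing, so $(X,T)$ already has non-uniform specification at scale $\theta$ with its original bounds $f$, and I take $\overline{f} := f$. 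If $\theta < \delta$, Lemma~\ref{gapscaleindep} (applied with $\theta$ in the role of its ``$\eta$'') yields a constant $C = C(\theta)$ so that $(X,T)$ has non-uniform specification at scale $\theta$ with gap bounds $f(n+C)+C$, and I take $\overline{f}(n) := f(n+C)+C$, which is again increasing and $\mathbb{N}\to\mathbb{N}$ since $f$ is. Either way, $(Y,S)$ has non-uniform specification with gap bounds $\overline{f}$ at scale $\eta$.

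It then remains only to check the growth condition on $\overline{f}$, and the only case needing an argument is $\overline{f}(n) = f(n+C)+C$. Taking a sequence $n_k \to \infty$ with $f(n_k)/\ln n_k \to 0$ (which exists by hypothesis) and putting $m_k := n_k - C$, one has
\[
\frac{\overline{f}(m_k)}{\ln m_k} \;=\; \frac{f(n_k)+C}{\ln(n_k-C)} \;=\; \frac{f(n_k)}{\ln n_k}\cdot\frac{\ln n_k}{\ln(n_k-C)} + \frac{C}{\ln(n_k-C)} \;\longrightarrow\; 0,
\]
using $\ln n_k/\ln(n_k-C)\to 1$; hence $\liminf_{n\to\infty}\overline{f}(n)/\ln n = 0$. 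In the transitivity case the same computation with an arbitrary $n\to\infty$ in place of the subsequence gives $\overline{f}(n)/\ln n\to 0$. I do not anticipate any real obstacle: all the dynamical content sits in Proposition~\ref{hypfactor} and Lemma~\ref{gapscaleindep}, and the corollary is just the remark that the class of admissible gap bounds is unchanged under the replacement $f(n)\mapsto f(n+C)+C$.
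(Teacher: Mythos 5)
Your proposal is correct and follows exactly the route the paper intends: the paper's entire ``proof'' is the one-line remark that the corollary follows from Proposition~\ref{hypfactor} together with Lemma~\ref{gapscaleindep}, and your write-up simply fills in the routine details (the change of scale, the replacement $f(n)\mapsto f(n+C)+C$, and the check that the $\liminf$/$\lim$ condition survives the shift by $C$). No issues.
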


\begin{remark}
The class of $(X,T)$ satisfying either hypothesis of this corollary is then closed under expansive factors, and by Theorems~\ref{mainthm} and \ref{mainthm2}, any such system has a unique equilibrium state for any potential $\phi$ with partial sum variation bounds $g(n)$ satisfying $\lim_{n \rightarrow \infty} g(n)/\ln n = 0$.
\end{remark}

%(SAY ANYTHING ABOUT ANSWERING VAUGHN'S QUESTION ON HYPERBOLIC POTENTIALS ON SUBSHIFTS?)

%The following corollary is immediate. 

%\begin{corollary}\label{ratescaleindep}
%If $(X,T)$ is expansive (with expansivity constant $\delta$) and has non-uniform specification at scale $\delta$ with gap function and $\phi$ is a potential with partial sum variations $g(n)$ at scale $\delta$, and if $\liminf \frac{f(n) + g(n)}{\ln n} = 0$, then for every $\eta < \delta$, $(X,T)$ has non-uniform specification at scale $\eta$ with gap function $\overline{f}(n)$ and $\phi$ has partial sum variations $\overline{g}(n)$ at scale $\eta$ satisfying $\liminf \frac{\overline{f}(n) + \overline{g}(n)}{\ln n} = 0$.
%\end{corollary}

We now move to products, with the goal of proving Corollary~\ref{Kcor}. All products of metric spaces will be endowed with the $d_{\infty}$ metric defined by $d_{\infty}((x_1, y_1), (x_2, y_2)) = \max(d_1(x_1, y_1), d_2(x_2, y_2))$. The proofs of the following results are left to the reader.

\begin{lemma}\label{productpres1}
If $(X_1,T_1)$, $(X_2,T_2)$ are expansive dynamical systems with the same expansivity constant $\delta$ and which have non-uniform specification with gap bounds $f_1(n)$ and $f_2(n)$ (at scale $\delta$) respectively, then $(X_1 \times X_2, T_1 \times T_2)$ is an expansive dynamical system with expansivity constant $\delta$ which has non-uniform specification with gap bounds $f(n) = \max(f_1(n), f_2(n))$ at scale $\delta$.
\end{lemma}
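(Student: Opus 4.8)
The plan is to verify the two assertions — expansivity with constant $\delta$, and non-uniform specification with gap bounds $f(n)=\max(f_1(n),f_2(n))$ — directly from the definitions, using the fact that the $d_\infty$ metric on the product reduces every estimate to a coordinatewise one. First I would check expansivity: if $(x_1,x_2)\neq(y_1,y_2)$, then $x_j\neq y_j$ for some $j\in\{1,2\}$, so by expansivity of $(X_j,T_j)$ there is $n\in\mathbb{Z}$ with $d_j(T_j^n x_j, T_j^n y_j)>\delta$; since $d_\infty\big((T_1\times T_2)^n(x_1,x_2),(T_1\times T_2)^n(y_1,y_2)\big)=\max_j d_j(T_j^n x_j,T_j^n y_j)\ge d_j(T_j^n x_j,T_j^n y_j)>\delta$, the constant $\delta$ works for the product.

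Next I would verify non-uniform specification. Given $k$, points $(x_1^{(1)},x_1^{(2)}),\dots,(x_k^{(1)},x_k^{(2)})\in X_1\times X_2$, lengths $n_1,\dots,n_k$ and gaps $m_1,\dots,m_{k-1}$ with $m_i\ge\max(f(n_i),f(n_{i+1}))$, I observe that since $f=\max(f_1,f_2)\ge f_j$ for each $j$, the same gaps satisfy $m_i\ge\max(f_j(n_i),f_j(n_{i+1}))$ for $j=1,2$. So I may apply non-uniform specification of $(X_j,T_j)$ at scale $\delta$ to the coordinate sequence $(x_i^{(j)})_i$ to obtain $z_j\in X_j$ that $\delta$-shadows $(x_i^{(j)})$ for $(n_i)$ iterates with gaps $(m_i)$. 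The key point is that the time-offsets $\sum_{\ell=1}^{i-1}(n_\ell+m_\ell)$ appearing in Definition~\ref{shadow} depend only on the $n_\ell$ and $m_\ell$, not on the system, so they are the same in both coordinates and in the product. Setting $z=(z_1,z_2)$, for every $0<i<k$ and $0\le m<n_i$ we get
\[
d_\infty\big((T_1\times T_2)^{m+\sum_{\ell<i}(n_\ell+m_\ell)}z,\ (T_1\times T_2)^m(x_i^{(1)},x_i^{(2)})\big)=\max_{j=1,2} d_j\big(T_j^{m+\sum_{\ell<i}(n_\ell+m_\ell)}z_j,\ T_j^m x_i^{(j)}\big)<\delta,
\]
so $z$ $\delta$-shadows the product orbit segments with the prescribed gaps, which is exactly non-uniform specification with gap bounds $f(n)$ at scale $\delta$.

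There is no real obstacle here; the only thing to be careful about is the bookkeeping of indices in Definition~\ref{shadow} and the observation that $\max(f_1,f_2)$ dominates each $f_j$ so that a gap legal for the product is automatically legal for each factor — this is precisely why the $\max$ (rather than, say, a sum) is the right combination. If one wanted the analogous statement for non-uniform transitivity it would be even easier, since only $k=2$ and a single gap are involved; I would not belabor that case.
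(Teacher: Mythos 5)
Your proof is correct, and since the paper explicitly leaves this proof to the reader, yours is precisely the intended coordinatewise argument: expansivity and shadowing in the $d_\infty$ metric reduce to the two factors, and $f=\max(f_1,f_2)$ guarantees that any admissible gap for the product is admissible in each factor. No gaps; the observation that the time offsets $\sum_{\ell<i}(n_\ell+m_\ell)$ are system-independent is exactly the right point to make explicit.
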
 

\begin{lemma}
If $(X_1, T_1)$ and $(X_2, T_2)$ are expansive dynamical systems with the same expansivity constant $\delta$ and $\phi_1$ and $\phi_2$ are potentials with partial sum variation bounds $g_1(n)$ and $g_2(n)$ (at scale $\delta$) respectively, then the potential $\phi$ on $(X_1 \times X_2, T_1 \times T_2)$ defined by $\phi(x_1, x_2) := \phi_1(x_1) + \phi_2(x_2)$ has partial sum variation bounds $g(n) = g_1(n) + g_2(n)$ at scale $\delta$.
\end{lemma}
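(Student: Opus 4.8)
The plan is to reduce the partial sum variation on the product directly to those of the two factors, using both the additive structure of $\phi$ and the choice of the $d_\infty$ metric. First I would fix $i \leq n$ and a pair $((x_1,x_2),(y_1,y_2))$ of points of $X_1 \times X_2$ with $d_\infty(T^j(x_1,x_2), T^j(y_1,y_2)) < \delta$ for all $0 \leq j < i$, where $T = T_1 \times T_2$. Since $T^j(x_1,x_2) = (T_1^j x_1, T_2^j x_2)$ and $d_\infty$ is the maximum of the two coordinate distances, this single hypothesis splits into $d_1(T_1^j x_1, T_1^j y_1) < \delta$ and $d_2(T_2^j x_2, T_2^j y_2) < \delta$ for all $0 \leq j < i$; that is, each coordinate pair is admissible for the corresponding partial sum variation at scale $\delta$.

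Next I would use $\phi(x_1,x_2) = \phi_1(x_1) + \phi_2(x_2)$, which gives $S_i\phi(x_1,x_2) = S_i\phi_1(x_1) + S_i\phi_2(x_2)$ for every $i$, the partial sums decoupling because $T$ acts coordinatewise. Subtracting the analogous expression for $(y_1,y_2)$ and applying the triangle inequality,
\[
\begin{aligned}
|S_i\phi(x_1,x_2) - S_i\phi(y_1,y_2)| &\leq |S_i\phi_1(x_1) - S_i\phi_1(y_1)| + |S_i\phi_2(x_2) - S_i\phi_2(y_2)| \\
&\leq V(X_1,T_1,\phi_1,i,\delta) + V(X_2,T_2,\phi_2,i,\delta),
\end{aligned}
\]
where the last bound uses the admissibility established in the previous step. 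Taking the supremum over all such pairs gives $V(X_1\times X_2, T, \phi, i, \delta) \leq V(X_1,T_1,\phi_1,i,\delta) + V(X_2,T_2,\phi_2,i,\delta)$.

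Finally, I would invoke the definitions of the variation bounds: since $g_1(n) \geq V(X_1,T_1,\phi_1,i,\delta)$ and $g_2(n) \geq V(X_2,T_2,\phi_2,i,\delta)$ for every $i \leq n$, combining with the displayed inequality yields $g(n) = g_1(n) + g_2(n) \geq V(X_1\times X_2, T, \phi, i, \delta)$ for all $i \leq n$, which is precisely the assertion that $\phi$ has partial sum variation bounds $g(n)$ at scale $\delta$. There is no real obstacle here; the only points requiring (minimal) care are recalling that the product carries the $d_\infty$ metric, so $\delta$-closeness of pairs of orbit segments is equivalent to simultaneous $\delta$-closeness in each factor, and that the conclusion must be verified for every $i \leq n$ and not merely for $i = n$, which is immediate from the factor hypotheses.
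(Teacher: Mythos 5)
Your proof is correct and is exactly the routine argument the paper has in mind (the paper explicitly leaves this proof to the reader): the $d_\infty$ metric makes $\delta$-closeness of product orbits equivalent to coordinatewise $\delta$-closeness, the partial sums decouple by additivity of $\phi$, and the triangle inequality plus checking all $i \leq n$ finishes it. Nothing is missing.
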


\begin{corollary}\label{productpres2}
If $(X_1, T_1)$ and $(X_2, T_2)$ are expansive dynamical systems with non-uniform gap specification with gap bounds $f_1(n)$ and $f_2(n)$ respectively, and if $\phi_1$ and $\phi_2$ are potentials with partial sum variation bounds $g_1(n)$ and $g_2(n)$ respectively (all at scales equal to the relevant expansivity constants), and if there exists a sequence $n_k$ so that $\lim_{k \rightarrow \infty} \frac{f_1(n_k)}{\ln n_k} = \lim_{k \rightarrow \infty} \frac{f_2(n_k)}{\ln n_k} = \lim_{k \rightarrow \infty} \frac{g_1(n_k)}{\ln n_k} = \lim_{k \rightarrow \infty} \frac{g_2(n_k)}{\ln n_k} = 0$, and the potential $\phi$ on $(X_1 \times X_2, T_1 \times T_2)$ is defined by $\phi(x_1, x_2) := \phi_1(x_1) + \phi_2(x_2)$, then $(X_1 \times X_2, T_1 \times T_2, \phi)$ satisfies the hypotheses of Theorem~\ref{mainthm}.
\end{corollary}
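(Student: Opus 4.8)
The plan is to chain together the scale-change lemmas and the product lemmas already established, and then verify the $\liminf$ hypothesis of Theorem~\ref{mainthm} by evaluating the resulting bounds along an appropriately shifted subsequence.

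First I would pass to a common scale. Let $\delta_1,\delta_2$ be expansivity constants for $(X_1,T_1)$ and $(X_2,T_2)$, and fix any $\delta<\min(\delta_1,\delta_2)$, which is then simultaneously an expansivity constant for both systems. By Lemma~\ref{gapscaleindep} there are constants $C_1,C_2$ so that $(X_i,T_i)$ has non-uniform specification at scale $\delta$ with gap bounds $\tilde f_i(n):=f_i(n+C_i)+C_i$, and by Lemma~\ref{sumscaleindep} there are constants $D_1,D_2$ so that $\phi_i$ has partial sum variation bounds $\tilde g_i(n):=g_i(n)+D_i$ at scale $\delta$. (The $f_i$ are nondecreasing by definition, and we may assume the same of the $g_i$ after replacing $g_i$ by $n\mapsto\max_{j\le n}g_i(j)$, which leaves the hypotheses intact.) Then Lemma~\ref{productpres1} gives that $(X_1\times X_2,T_1\times T_2)$ is expansive with expansivity constant $\delta$ and has non-uniform specification at scale $\delta$ with gap bounds $\tilde f(n):=\max(\tilde f_1(n),\tilde f_2(n))$, and the (unlabeled) lemma immediately following Lemma~\ref{productpres1} gives that the continuous potential $\phi(x_1,x_2)=\phi_1(x_1)+\phi_2(x_2)$ has partial sum variation bounds $\tilde g(n):=\tilde g_1(n)+\tilde g_2(n)$ at scale $\delta$. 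Since $\delta$ is exactly the expansivity constant of the product, these bounds are ``at scale equal to the expansivity constant,'' and all that remains is to check $\liminf_{n\to\infty}(\tilde f(n)+\tilde g(n))/\ln n=0$.

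For that last point, set $C=\max(C_1,C_2)$, $D=D_1+D_2$, and, for $k$ large, $m_k=n_k-C$. Monotonicity gives $\tilde f(m_k)\le\max(f_1(n_k),f_2(n_k))+C$ and $\tilde g(m_k)\le g_1(n_k)+g_2(n_k)+D$, hence
\[
\frac{\tilde f(m_k)+\tilde g(m_k)}{\ln m_k}\le\frac{\max(f_1(n_k),f_2(n_k))+g_1(n_k)+g_2(n_k)+C+D}{\ln(n_k-C)}.
\]
Each of $f_1(n_k)/\ln n_k$, $f_2(n_k)/\ln n_k$, $g_1(n_k)/\ln n_k$, $g_2(n_k)/\ln n_k$ tends to $0$ by hypothesis, $(C+D)/\ln n_k\to0$, and $\ln(n_k-C)/\ln n_k\to1$, so the right-hand side tends to $0$. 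This yields the required $\liminf$, so $(X_1\times X_2,T_1\times T_2,\phi)$ satisfies the hypotheses of Theorem~\ref{mainthm}.

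The one place requiring care — the ``main obstacle,'' such as it is — is the interaction between the monotonicity of the $f_i$ and the argument shift $n\mapsto n+C_i$ produced by the change of scale: this shift points the wrong way, so one cannot simply evaluate the combined bound at $n_k$ itself. Evaluating instead at $m_k=n_k-C$ turns the shift to our advantage, and since $\ln(n_k-C)\sim\ln n_k$ the $\liminf$ along this shifted subsequence is still $0$. Everything else is routine bookkeeping with the constants supplied by Lemmas~\ref{gapscaleindep}, \ref{sumscaleindep}, and \ref{productpres1}.
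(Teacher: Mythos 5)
Your argument is correct, but it takes a different route from the one the paper intends. The paper leaves this corollary to the reader and, in the remark immediately following it, indicates that the equality of expansivity constants needed for Lemma~\ref{productpres1} should be arranged by simply rescaling the two metrics so that both expansivity constants equal $1$; under such a rescaling the scales at which the gap bounds and variation bounds hold rescale along with the expansivity constants, so $f_1,f_2,g_1,g_2$ carry over verbatim, Lemma~\ref{productpres1} and the unlabeled lemma apply directly, and the $\liminf$ condition of Theorem~\ref{mainthm} is checked at $n_k$ itself with no shift. You instead pass to a common scale $\delta<\min(\delta_1,\delta_2)$ via Lemma~\ref{gapscaleindep}, which forces you to carry the additive constants and the argument shift $n\mapsto n+C_i$, and then to evaluate along the shifted subsequence $m_k=n_k-C$ --- exactly the device the paper itself uses later in Lemma~\ref{goodseq}, so this is perfectly in the spirit of the paper and equally rigorous; it just trades the one-line normalization for a little bookkeeping. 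Two small points: (i) your appeal to Lemma~\ref{sumscaleindep} is in the wrong direction --- that lemma (as its proof shows, despite a typo in its statement) converts variation bounds at a \emph{smaller} scale $\eta$ into bounds at the expansivity constant $\delta$ at the cost of an additive constant, whereas you need to go from scale $\delta_i$ down to $\delta<\delta_i$, which is immediate since $V(X,T,\phi,n,\cdot)$ is monotone in the scale (so your $\tilde g_i$ are still valid bounds, just for a trivial reason rather than the cited one); (ii) your observation that one may replace $g_i$ by its running maximum is actually unnecessary, since the paper's definition of ``partial sum variation bounds $g(n)$'' already requires $g(n)\geq V(X,T,\phi,i,\eta)$ for all $i\leq n$. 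Neither point affects the validity of the proof.
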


\begin{remark}
The reason no assumption need be made on the equality of expansivity constants in Corollary~\ref{productpres2} is that one can always render them equal (say to $1$) by normalizing the metrics.
\end{remark}

%\begin{corollary}\label{selfproduct}
%If $(X,T)$ and $\phi$ satisfy the hypotheses of Theorem~\ref{mainthm}, then $(X \times X, T \times T)$ and $\phi^{(2)}$ defined by $\phi^{(2)}(x,y) := \phi(x) + \phi(y)$ does as well.
%\end{corollary}

We can now apply the following result of Ledrappier. 

\begin{theorem}[\cite{ledrappier}]\label{ktrick}
If $(X,T)$ is an expansive dynamical system, $\phi$ is a potential, and $(X \times X, T \times T)$ has a unique equilibrium state for the potential $\phi^{(2)}$ defined by $\phi^{(2)}(x,y) = \phi(x) + \phi(y)$, then $(X,T)$ has a unique equilibrium state $\mu$ for $\phi$, and $(X,T,\mu)$ is a K-system.
\end{theorem}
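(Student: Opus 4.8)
The plan is to treat the two assertions separately. Uniqueness of the equilibrium state for $\phi$ will follow formally from the additivity of topological pressure and of measure-theoretic entropy under products, while the $K$-property will be obtained by the Ledrappier trick: testing the assumed uniqueness of the equilibrium state for $\phi^{(2)}$ against the relatively independent self-joining of $\mu$ over its Pinsker $\sigma$-algebra.

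First I would record the standard product identities, which make sense because expansivity forces all the entropies involved to be finite: $P(X \times X, T \times T, \phi^{(2)}) = 2 P(X, T, \phi)$, and for any $\mu, \nu \in \mathcal{M}(X,T)$ one has $h(X \times X, T \times T, \mu \times \nu) = h(X, T, \mu) + h(X, T, \nu)$ and $\int \phi^{(2)} \, d(\mu \times \nu) = \int \phi \, d\mu + \int \phi \, d\nu$. Now if $\mu_1$ and $\mu_2$ are equilibrium states for $\phi$, then for each $i$, $h(X \times X, T \times T, \mu_i \times \mu_i) + \int \phi^{(2)} \, d(\mu_i \times \mu_i) = 2 h(X, T, \mu_i) + 2 \int \phi \, d\mu_i = 2 P(X, T, \phi) = P(X \times X, T \times T, \phi^{(2)})$, so $\mu_i \times \mu_i$ is an equilibrium state for $\phi^{(2)}$. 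The hypothesis then forces $\mu_1 \times \mu_1 = \mu_2 \times \mu_2$, and projecting onto the first coordinate gives $\mu_1 = \mu_2$. Let $\mu$ denote the unique equilibrium state for $\phi$; being the sole point, hence an extreme point, of the simplex of equilibrium states, it is ergodic.

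For the $K$-property, let $\pi \colon (X, T, \mu) \to (Y, S, \nu)$ be the Pinsker factor of $(X,T,\mu)$, so that $h(Y, S, \nu) = 0$; disintegrate $\mu = \int_Y \mu_y \, d\nu(y)$ over $\pi$, and define $\lambda := \int_Y \mu_y \times \mu_y \, d\nu(y)$ on $X \times X$, the relatively independent self-joining of $\mu$ over $\pi$. This $\lambda$ is $(T \times T)$-invariant, has both coordinate marginals equal to $\mu$ (so $\int \phi^{(2)} \, d\lambda = 2 \int \phi \, d\mu$), and factors onto $(Y, S, \nu)$ through each coordinate projection. Applying the Abramov--Rokhlin entropy addition formula to this factor, together with the relative analogue of additivity of entropy under products (the two coordinates of $\lambda$ being conditionally independent over $Y$), one gets that $h(X \times X, T \times T, \lambda)$ equals $h(Y, S, \nu)$ plus twice the entropy of $(X,T,\mu)$ relative to $(Y,S,\nu)$; since $h(Y,S,\nu) = 0$ this relative entropy equals $h(X, T, \mu)$, so $h(X \times X, T \times T, \lambda) = 2\, h(X, T, \mu)$. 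Therefore $h(X \times X, T \times T, \lambda) + \int \phi^{(2)} \, d\lambda = 2 P(X, T, \phi) = P(X \times X, T \times T, \phi^{(2)})$, so $\lambda$ is an equilibrium state for $\phi^{(2)}$, whence $\lambda = \mu \times \mu$ by uniqueness.

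It remains to extract triviality of the Pinsker $\sigma$-algebra from $\lambda = \mu \times \mu$. Evaluating both measures on rectangles $A \times A$ gives $\int_Y \mu_y(A)^2 \, d\nu(y) = \mu(A)^2 = (\int_Y \mu_y(A) \, d\nu(y))^2$, and strict convexity of $t \mapsto t^2$ (equality in Jensen's inequality) forces $\mu_y(A) = \mu(A)$ for $\nu$-a.e. $y$. Running this over a countable algebra of Borel sets generating the Borel $\sigma$-algebra of $X$ produces one $\nu$-conull set of $y$ on which $\mu_y = \mu$; hence the disintegration is trivial, i.e. the Pinsker $\sigma$-algebra of $(X,T,\mu)$ is $\mu$-trivial, which is exactly the $K$-property (equivalently, every nontrivial finite partition has positive $\mu$-entropy). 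I expect the main obstacle to be the entropy identity $h(X \times X, T \times T, \lambda) = 2\, h(X, T, \mu)$: it requires careful bookkeeping with conditional (relative) entropy, correct invocation of the Abramov--Rokhlin formula and of the relative product formula, and attention to the fact that finiteness of entropy, guaranteed here by expansivity, is precisely what legitimizes these manipulations.
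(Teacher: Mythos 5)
The paper gives no proof of this theorem, citing it to Ledrappier, and your argument is a correct reconstruction of Ledrappier's original proof: products of equilibrium states give uniqueness for $\phi$, and the relatively independent self-joining over the Pinsker factor, shown via Abramov--Rokhlin and relative product additivity to be an equilibrium state for $\phi^{(2)}$, must coincide with $\mu\times\mu$, forcing the Pinsker $\sigma$-algebra to be trivial. The steps you flag as delicate (the identity $h(X\times X, T\times T,\lambda)=2h(X,T,\mu)$ and the finiteness of entropy supplied by expansivity) are exactly the right ones to worry about, and they go through as you describe.
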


Corollary~\ref{Kcor} is now implied by Corollary~\ref{productpres2} and Theorem~\ref{ktrick}. Corollary~\ref{hypcor} follows as well: the closure under expansive factors comes from Corollary~\ref{factorpres}, H\"{o}lder potentials are Bowen (i.e. have bounded partial sum variation bounds) for subshifts, and positive entropy of the unique equilibrium state comes from Corollary~\ref{Kcor} since K-systems have positive entropy.

Finally, we need some technical results about behavior of separated sets/partition functions under changes of scale. The proof of the following lemma is motivated by arguments from \cite{bowen}. 

\begin{lemma}\label{sepscaleindep}
If $(X,T)$ is expansive (with expansivity constant $\delta$) and $\eta < \delta$, then there exists a constant $M = M(\eta)$ so that for every $n$, every $(n,\eta)$-separated set can be written as a disjoint union of $M$ sets which are each $(n,\delta)$-separated.
\end{lemma}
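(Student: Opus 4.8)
The plan is to reduce the lemma to a uniform bound on the degree of an auxiliary graph, and then to color that graph. Given an $(n,\eta)$-separated set $S$, I would build a graph $G$ on vertex set $S$ by joining $x \neq y$ by an edge exactly when they are \emph{not} $(n,\delta)$-separated, i.e. $d(T^k x, T^k y) \leq \delta$ for every $0 \leq k < n$. Then a subset of $S$ is $(n,\delta)$-separated if and only if it is independent in $G$, so it suffices to show that $G$ has maximum degree at most some $M_0 = M_0(\eta)$ not depending on $n$ or $S$: a finite graph of maximum degree $M_0$ is properly $(M_0+1)$-colorable, and the $M := M_0+1$ color classes (padded with empty sets if fewer colors are used) give the required decomposition. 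There is no issue with finiteness, since $y \mapsto (y, Ty, \ldots, T^{n-1}y)$ embeds $S$ as an $\eta$-separated subset of the compact space $(X^n, d_{\infty})$.

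The key input is a standard compactness consequence of (two-sided) expansivity: I would first fix $N = N(\eta) \in \mathbb{N}$ so that for all $u, v \in X$, if $d(T^{\ell} u, T^{\ell} v) \leq \delta$ for all $|\ell| \leq N$, then $d(u,v) < \eta/2$. This comes from covering the compact set $\{(u,v) : d(u,v) \geq \eta/2\}$ by the open sets $\{(u,v) : d(T^k u, T^k v) > \delta\}$ over $k \in \mathbb{Z}$ (possible by expansivity, since $u \neq v$ on that set) and taking $N$ to dominate the finitely many indices in a subcover.

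For the degree bound, fix $x \in S$ and let $y_1, \ldots, y_r$ be its $G$-neighbors, so $d(T^k x, T^k y_j) \leq \delta$ for $0 \leq k < n$ and all $j$. If $n \leq 2N$, then $y \mapsto (y, Ty, \ldots, T^{n-1}y)$ already exhibits all of $S$ as an $\eta$-separated subset of $(X^n, d_{\infty})$, so $|S| \leq M_0$ and we may split $S$ into singletons. If $n > 2N$, then for each $j$ and each $k$ in the middle window $N \leq k < n-N$, every $\ell$ with $|\ell| \leq N$ satisfies $k+\ell \in [0,n)$, hence $d(T^{\ell} T^k x, T^{\ell} T^k y_j) \leq \delta$, and so $d(T^k x, T^k y_j) < \eta/2$ by the choice of $N$; by the triangle inequality $d(T^k y_i, T^k y_j) < \eta$ for all $i,j$ throughout this middle window. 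Since distinct $y_i, y_j$ are $(n,\eta)$-separated, any coordinate $k$ witnessing their separation must lie in $[0,N) \cup [n-N, n)$ (a disjoint union of total length $2N$), so $\Phi(y) := (y, Ty, \ldots, T^{N-1}y, T^{n-N}y, \ldots, T^{n-1}y) \in X^{2N}$ maps $\{y_1,\ldots,y_r\}$ to an $\eta$-separated subset of $(X^{2N}, d_{\infty})$. Taking $M_0$ to be the maximal cardinality of such a set (finite by compactness, and depending only on $\eta$ since $N = N(\eta)$), we get $r \leq M_0$, as desired.

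The only genuine obstacle is the edge effect: the points of $S$ only $\delta$-shadow one another on the one-sided window $[0,n)$, so expansivity cannot be invoked near the endpoints $0$ or $n$, and one cannot hope to collapse the whole window to bounded size. The point to get right is that collapsing only the middle portion is enough — two points of an $(n,\eta)$-separated set that stay $\eta$-close throughout $[N, n-N)$ must already separate within the two end-windows, whose combined length $2N(\eta)$ depends only on $\eta$ — which then bounds the degree of $G$ and hence the number of pieces.
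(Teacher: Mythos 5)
Your proof is correct. It rests on the same key input as the paper's argument --- a uniform expansivity constant $N = N(\eta)$ obtained by compactness, together with the observation that two points of an $(n,\eta)$-separated set which $\delta$-shadow each other on all of $[0,n)$ can only be forced $\eta$-apart within $N$ steps of the endpoints $0$ or $n$ --- but the mechanism for extracting the decomposition is genuinely different. The paper chooses $\alpha>0$ so that $d(x,y)<\alpha$ implies $d(T^{\ell}x,T^{\ell}y)<\delta$ for all $|\ell|\le N$, fixes a partition $\{A_i\}_{i=1}^{k}$ of $X$ into sets of diameter less than $\alpha$, and splits $S$ into the $k^2$ explicit pieces $S\cap A_i\cap T^{-n}A_j$: membership in a common cell at times $0$ and $n$ extends the $\delta$-shadowing window to $[-N,\,n+N]$, so the two-sided expansivity lemma applies at every $m\in[0,n)$ and each piece is automatically $(n,\delta)$-separated (for $n\le 2N$ each piece is forced to be a singleton, so no case split is needed). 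You instead bound the maximum degree of the ``not $(n,\delta)$-separated'' graph by mapping the neighbors of a fixed vertex, via their orbit coordinates in the two end-windows $[0,N)\cup[n-N,n)$, to an $\eta$-separated subset of the compact space $(X^{2N},d_{\infty})$, and then greedily color with $M_0+1$ colors. Both routes produce a constant depending only on $\eta$ and $(X,T,\delta)$; the paper's is more economical in that the pieces are canonical and indexed concretely, while yours makes transparent that the true content is a cardinality bound on $\eta$-separated configurations supported on the two boundary windows of length $N(\eta)$, at the cost of a detour through graph coloring and a separate (trivial) treatment of $n\le 2N$.
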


\begin{proof}
Choose $(X,T)$ and $\eta$ as in the theorem. By expansiveness, there exists $N$ so that if $d(T^i x, T^i y) < \delta$ for $-N \leq i \leq N$, then $d(x,y) < \eta$. Since $T$ is a homeomorphism, there exists $\alpha > 0$ so that if $d(x,y) < \alpha$, then $d(T^i x, T^i y) < \delta$ for $-N \leq i \leq N$. Take any partition $\mathcal{P} = \{A_i\}_{i=1}^k$ of $X$ by sets of diameter less than $\alpha$.

Now, choose any $n$ and $(n,\eta)$-separated set $S$. For each $1 \leq i,j \leq k$, define $S_{i,j} = S \cap A_i \cap T^{-n} A_j$. We claim that each $S_{i,j}$ is $(n,\delta)$-separated, which will complete the proof for $M = k^2$. To see this, fix any $i,j$ and any $x \neq y \in S_{i,j}$. Since $x,y \in S$ and $S$ is $(n,\eta)$-separated, there exists $0 \leq m < n$ so that $d(T^m x, T^m y) \geq \eta$. Since $x,y \in A_i$, $d(x,y) < \alpha$, and so $d(T^i x, T^i y) < \eta$ for $0 \leq i \leq N$. Similarly, since $x,y \in T^{-n} A_j$, $d(T^n x, T^n y) < \alpha$, and so $d(T^i x, T^i y) < \eta$ for $n - N \leq i \leq n$. Therefore, $N < m < n - N$. However, by definition of $N$, this means that there exists $i \in [m - N, m + N] \subseteq [0, n]$ so that $d(T^i x, T^i y) > \delta$, completing the proof.

\end{proof}

The following fact actually appears in \cite{bowen} (as Lemma 1), but as it is a simple corollary of Lemma~\ref{sepscaleindep}, we give a proof here as well.

\begin{corollary}\label{partscaleindep}
If $(X,T)$ is expansive (with expansivity constant $\delta$) and $\eta < \delta$, then for any potential $\phi$,
\[
Z(X, T, \phi, n, \eta) \leq M(\eta) Z(X, T, \phi, n, \delta),
\]
where $M(\eta)$ is as in Lemma~\ref{sepscaleindep}.
\end{corollary}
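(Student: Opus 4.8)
The plan is to deduce Corollary~\ref{partscaleindep} directly from Lemma~\ref{sepscaleindep}. Fix $(X,T)$ expansive with expansivity constant $\delta$, fix $\eta < \delta$, and fix a potential $\phi$. Let $n \in \mathbb{N}$ and let $S$ be any $(n,\eta)$-separated set realizing the maximum in the definition of $Z(X,T,\phi,n,\eta)$ (such a maximizing $S$ exists by expansivity, since $(n,\eta)$-separated sets have uniformly bounded cardinality). Apply Lemma~\ref{sepscaleindep} to write $S = \bigsqcup_{\ell=1}^{M(\eta)} S_\ell$, where each $S_\ell$ is $(n,\delta)$-separated.

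The key step is a simple pigeonhole/averaging estimate on the partition sums. Since the $S_\ell$ partition $S$, we have
\[
Z(X,T,\phi,n,\eta) = \sum_{x \in S} e^{S_n \phi(x)} = \sum_{\ell=1}^{M(\eta)} \sum_{x \in S_\ell} e^{S_n \phi(x)}.
\]
For each $\ell$, since $S_\ell$ is $(n,\delta)$-separated, the inner sum $\sum_{x \in S_\ell} e^{S_n \phi(x)}$ is at most $Z(X,T,\phi,n,\delta)$ by definition of the partition function at scale $\delta$. Summing over the $M(\eta)$ values of $\ell$ gives
\[
Z(X,T,\phi,n,\eta) \leq M(\eta)\, Z(X,T,\phi,n,\delta),
\]
which is exactly the claimed inequality.

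There is essentially no obstacle here: the entire content is already carried by Lemma~\ref{sepscaleindep}, and the remaining argument is the one-line observation that a sum over a set splits as a sum over the pieces of any partition, together with the definition of $Z$ as a maximum over separated sets. The only minor point worth mentioning is the existence of a maximizing set $S$ (so that one is genuinely working with the quantity $Z(X,T,\phi,n,\eta)$ rather than an approximating set); if one prefers to avoid invoking compactness for this, one can instead run the argument with an arbitrary $(n,\eta)$-separated set $S$ and then take the supremum over all such $S$ at the very end, reaching the same conclusion. Either way the proof is immediate.
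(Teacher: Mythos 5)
Your proof is correct and is essentially identical to the paper's own argument: both take a maximizing $(n,\eta)$-separated set, split it into $M(\eta)$ pieces that are each $(n,\delta)$-separated via Lemma~\ref{sepscaleindep}, and bound each piece's partition sum by $Z(X,T,\phi,n,\delta)$. Your aside about the existence of a maximizer (or working with arbitrary separated sets and taking a supremum) is a reasonable extra precaution but changes nothing substantive.
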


\begin{proof}
Consider an $(n,\eta)$-separated set $U$ for which $\sum_{x \in S} e^{S_n \phi(x)} = Z(X,T, \phi,n,\eta)$. Then by Lemma~\ref{sepscaleindep}, we can write $U = \bigcup_{i = 1}^{M(\eta)} U_i$, where each $U_i$ is $(n, \delta)$-separated. Then
\[
Z(X,T, \phi,n,\eta) = \sum_{x \in U} e^{S_n \phi(x)} = \sum_{i = 1}^{M(\eta)} \left(\sum_{x \in U_i} e^{S_n \phi(x)} \right) \leq M(\eta) Z(X, T, \phi, n, \delta).
\]
\end{proof}

\section{Proofs of Theorems~\ref{mainthm} and \ref{mainthm2}}\label{proofs}

The first tool that we need is quite basic; it is the existence of a sequence with helpful bounds on $f$ and $g$ under the hypotheses of either Theorem~\ref{mainthm} or \ref{mainthm2}.

\begin{lemma}\label{goodseq}
If $(X,T)$ and $\phi$ satisfy the hypotheses of either Theorem~\ref{mainthm} or \ref{mainthm2}, then there exists a sequence $\{n_k\}$ so that for all $\eta < \delta$, there exist increasing 
$\overline{f}$ and $\overline{g}$ where $(X,T)$ has non-uniform transitivity with gap bounds 
$\overline{f}(n)$ at scale $\eta$, $\phi$ has partial sum variation bounds $\overline{g}(n)$ at scale 
$\eta$, and $\overline{f}(n_k)/\ln n_k, \overline{g}(n_k)/\ln n_k \rightarrow 0$. 
\end{lemma}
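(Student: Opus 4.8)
The plan is to handle both hypotheses simultaneously by first weakening the conclusion to non-uniform transitivity and then reducing the whole statement to a numerical fact about $f$ and $g$. First I would observe that non-uniform specification with gap bounds $f(n)$ implies non-uniform transitivity with the same gap bounds (apply Definition~\ref{nonunifdef} with $k=2$ and gap exactly $f(n)=\max(f(n),f(n))$), so under either set of hypotheses $(X,T)$ has non-uniform transitivity with gap bounds $f(n)$ at scale $\delta$; moreover $f$ is increasing by definition, and we may assume $g$ increasing, replacing it if necessary by $n\mapsto\max_{i\le n}V(X,T,\phi,i,\delta)$, the minimal valid partial sum variation bound, which cannot increase $(f(n)+g(n))/\ln n$. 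Then Lemmas~\ref{gapscaleindep} and \ref{sumscaleindep} furnish, for each $\eta<\delta$, constants $C=C(\eta)$, $D=D(\eta)$ such that $(X,T)$ has non-uniform transitivity at scale $\eta$ with the increasing gap bounds $\overline f(n):=f(n+C)+C$ and $\phi$ has partial sum variation bounds $\overline g(n):=g(n)+D$ at scale $\eta$. So the lemma reduces to producing one sequence $\{n_k\}$, independent of $\eta$, along which $\overline f(n_k)/\ln n_k\to 0$ and $\overline g(n_k)/\ln n_k\to 0$ for every $\eta<\delta$. Under the hypotheses of Theorem~\ref{mainthm2}, where $\lim_n(f(n)+g(n))/\ln n=0$, this is immediate with $n_k=k$, since $f(n+C)/\ln(n+C)\to 0$ while $\ln(n+C)/\ln n\to 1$, and likewise $(g(n)+D)/\ln n\to 0$; the fixed shift and additive constant are harmless.

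The case of Theorem~\ref{mainthm}, with only $\liminf_n(f(n)+g(n))/\ln n=0$, is where the real content lies, and this is the step I expect to be the main obstacle. Since $f$ is merely increasing, a sequence along which $f(n_k)/\ln n_k\to 0$ need not satisfy $f(n_k+C)/\ln n_k\to 0$, and $C(\eta)$ can be arbitrarily large as $\eta\to 0$, so the sequence must tolerate an arbitrarily large additive shift uniformly in $\eta$.

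The device I would use to overcome this is that monotonicity of $f+g$ promotes a single ``good'' point to a whole long ``good'' interval below it: if $(f(b)+g(b))/\ln b<1/j$, then for every integer $m\in[\lceil b^{1-1/j}\rceil,\,b]$ we have $f(m)+g(m)\le f(b)+g(b)<\tfrac1j\ln b$ and $\ln m\ge(1-\tfrac1j)\ln b$, hence $(f(m)+g(m))/\ln m<\tfrac{1/j}{1-1/j}=\tfrac1{j-1}$, while the length $b-\lceil b^{1-1/j}\rceil$ tends to $\infty$ as $b\to\infty$ for each fixed $j$. Using the liminf hypothesis, I would choose for each $j\ge 2$ such a $b=b_j$ large enough that, writing $a_j:=\lceil b_j^{1-1/j}\rceil$ and $I_j:=[a_j,b_j]$, one has $a_j\ge a_{j-1}+1$ and $b_j-a_j\ge j$; then $a_j\to\infty$ automatically, and I would set $n_k:=a_k$.

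It then remains to verify this sequence. Fix $\eta<\delta$ with the associated $C$, $D$, $\overline f$, $\overline g$. For every $k\ge C$ we have $a_k+C\le a_k+k\le b_k$, so $a_k+C\in I_k$, whence $f(a_k+C)\le f(a_k+C)+g(a_k+C)<\tfrac1{k-1}\ln(a_k+C)$; since $a_k\to\infty$ forces $\ln(a_k+C)/\ln a_k\to 1$, this gives $\overline f(n_k)/\ln n_k=(f(a_k+C)+C)/\ln a_k\to 0$. Likewise $a_k\in I_k$ gives $g(a_k)/\ln a_k\le(f(a_k)+g(a_k))/\ln a_k<\tfrac1{k-1}\to 0$, so $\overline g(n_k)/\ln n_k=(g(a_k)+D)/\ln a_k\to 0$. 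Since $C$, $D$, $\overline f$, $\overline g$ depended on $\eta$ but $\{n_k\}$ did not, this completes the proof.
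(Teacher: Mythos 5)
Your proof is correct, and it follows the same overall skeleton as the paper's: reduce specification to transitivity, invoke Lemmas~\ref{gapscaleindep} and \ref{sumscaleindep} to pass to scale $\eta$ with $\overline f(n)=f(n+C)+C$ and $\overline g(n)=g(n)+D$, and then exhibit a suitable sequence. The difference is entirely in the $\liminf$ case, and it is a substantive one. The paper simply picks $m_k$ with $(f(m_k)+g(m_k))/\ln m_k\to 0$ and sets $n_k=m_k-C$; since $C=C(\eta)$, this makes the sequence depend on $\eta$, whereas the lemma's quantifier order (``there exists $\{n_k\}$ so that for all $\eta<\delta$\dots'') demands a single sequence working at every scale --- and for a general increasing $f$ with only $\liminf f(n)/\ln n=0$, the value $f(m_k+(C'-C))$ at a larger shift $C'$ can blow up. Your interval device --- promoting each good point $b$ to the good interval $[\lceil b^{1-1/j}\rceil,b]$ of length tending to infinity, and anchoring $n_k$ at the left endpoint so that any fixed additive shift eventually stays inside the interval --- produces a genuinely $\eta$-independent sequence and thereby supplies the uniformity that the paper's one-line choice glosses over. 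You correctly identified this as where the content lies; your argument is the more careful of the two, at the cost of a paragraph of extra bookkeeping (including the harmless normalization making $g$ increasing, which the paper also leaves implicit).
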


\begin{proof}

We first note that by Lemmas~\ref{gapscaleindep} and \ref{sumscaleindep}, there are positive constants $C,D$ so that we can take $\overline{f}(n) = C + f(n + C)$ and $\overline{g}(n) = D + g(n) \leq D + g(n + C)$. 

Under the hypotheses of Theorem~\ref{mainthm}, there exists a sequence $\{m_k\}$ so that 
$\frac{f(m_k) + g(m_k)}{\ln m_k} \rightarrow 0$. Then if we take $n_k = m_k - C$, $\frac{\overline{f}(n_k) + \overline{g}(n_k)}{\ln n_k} \rightarrow 0$, so $\{n_k\}$ satisfies the conclusion of the lemma. 

Similarly, under the hypotheses of Theorem~\ref{mainthm2}, $\frac{f(k) + g(k)}{\ln k} \rightarrow 0$, so $\frac{\overline{f}(k) + \overline{g}(k)}{\ln k} \rightarrow 0$ also, so taking $n_k = k$ satisfies the conclusion of the lemma. 
\end{proof}

\begin{definition}
A sequence $\{n_k\}$ satisfying the conclusion of Lemma~\ref{goodseq} is called an \textbf{anchor sequence} for $(X,T)$ and $\phi$.
\end{definition}

The next tools that we'll need are some upper bounds on the partition function for expansive systems with non-uniform specification, which generalize the well-known upper bound of of $e^{nP(X,T,\phi)}$ times a constant when specification is assumed (e.g. Lemma 3 of \cite{bowen}). 

\begin{theorem}\label{specpartbdthm}
If $(X,T)$ is an expansive dynamical system (with expansivity constant $\delta$) and non-uniform specification with gap bounds 
$f(n)$ at scale $\delta/3$, and $\phi$ is a potential with partial sum variation bounds $g(n)$ at scale $\delta/3$, then for all $n$, 
\[
Z(X, T, \phi, n,\delta) \leq e^{(n + f(n)) P(X,T, \phi) - f(n) \inf \phi + g(n)}.
\]
\end{theorem}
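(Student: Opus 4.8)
The goal is to bound $Z(X,T,\phi,n,\delta)$, the partition function over $(n,\delta)$-separated sets, by comparing it against the partition function at the much larger length scale implied by the pressure. The plan is to fix a large target length $N$ (to be taken to infinity), divide it into $q$ blocks each of length roughly $n + f(n)$, and use non-uniform specification to glue together $q$ copies of an optimal $(n,\delta)$-separated set $S$ into a single long separated set. Concretely, let $S$ be an $(n,\delta)$-separated set realizing $Z(X,T,\phi,n,\delta)$. Given any $q$-tuple $(x_1,\dots,x_q) \in S^q$, apply non-uniform specification at scale $\delta/3$ with all $n_i = n$ and all gaps $m_i = f(n)$ (which satisfies $m_i \ge \max(f(n_i),f(n_{i+1})) = f(n)$) to obtain a point $z = z(x_1,\dots,x_q)$ that $(\delta/3)$-shadows the $x_i$'s for $n$ iterates with gaps $f(n)$. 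Such a $z$ lives in the orbit segment of length $L := q(n + f(n))$ (the last gap is immaterial; one may take $q-1$ gaps and length $qn + (q-1)f(n)$, but the clean bound absorbs the slack).

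The two things to verify are: (1) distinct tuples give a separated set of points $z$, and (2) the partial sum $S_L\phi(z)$ is controlled below in terms of the $S_n\phi(x_i)$. For (1), if $(x_1,\dots,x_q) \ne (y_1,\dots,y_q)$ they differ in some coordinate $i$, where $x_i,y_i$ are $(n,\delta)$-separated, so they are separated by more than $\delta$ at some iterate $k < n$; since the corresponding $z$'s both shadow within $\delta/3$ at the matching translated coordinate, the triangle inequality gives separation by more than $\delta/3$ there. Hence $\{z(x_1,\dots,x_q)\}$ is an $(L, \delta/3)$-separated set (it may need to be thinned if the map from tuples to points is not injective, but the separation argument shows it actually is injective). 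For (2), the partial sum variation bound $g$ at scale $\delta/3$ gives, on each shadowed block of length $n$, $S_n\phi(T^{\,(\text{offset})}z) \ge S_n\phi(x_i) - g(n)$; on each of the $q$ gap-blocks of length $f(n)$ one has the trivial bound $\ge f(n)\inf\phi$. Summing, $S_L\phi(z) \ge \sum_{i=1}^q S_n\phi(x_i) - q\,g(n) + q f(n)\inf\phi$. Actually the target bound in the statement has the form for a single block — so really one takes $q=1$: the statement is just the $q=1$ case, comparing one orbit segment of length $n$ to the ``augmented'' segment of length $n + f(n)$ against pressure at the large scale.

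Reading the statement again, $q=1$ is exactly what is wanted. So the argument is: embed the optimal $(n,\delta)$-separated set $S$ into an $(n+f(n), \delta/3)$-separated set $S' = \{z(x) : x \in S\}$ via non-uniform specification (shadowing the single segment $x$ for $n$ iterates, then appending a gap of $f(n)$, with periodicity not needed), with the separation and the partial-sum estimate $S_{n+f(n)}\phi(z(x)) \ge S_n\phi(x) - g(n) + f(n)\inf\phi$ as above. Then
\[
Z(X,T,\phi,n,\delta) = \sum_{x\in S} e^{S_n\phi(x)} \le e^{g(n) - f(n)\inf\phi} \sum_{x \in S} e^{S_{n+f(n)}\phi(z(x))} \le e^{g(n) - f(n)\inf\phi}\, Z(X,T,\phi,n+f(n),\delta/3).
\]
Finally, by Lemma~\ref{presscaleindep} and the subadditivity-type upper bound $Z(X,T,\phi,m,\eta) \le e^{m P(X,T,\phi,\eta)}$ valid for every $m$ and $\eta \le \delta$ (which is how topological pressure is defined as a limit, together with the standard fact that the limit is approached from above — or one simply invokes $P(X,T,\phi,\delta/3) = P(X,T,\phi)$), we get $Z(X,T,\phi,n+f(n),\delta/3) \le e^{(n+f(n))P(X,T,\phi)}$, yielding the claimed inequality.

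The main obstacle to watch is the direction of the inequality $Z(X,T,\phi,m,\eta) \le e^{m P(X,T,\phi,\eta)}$: the pressure is a limit of $\tfrac1m \ln Z$, and one needs that $\ln Z(X,T,\phi,m,\eta) \le m P(X,T,\phi,\eta)$ for each finite $m$, not just asymptotically. This follows from (super/sub)multiplicativity of the partition function in $m$ — concatenating separated sets shows $\ln Z(\cdot, m+m', \eta)$ is roughly superadditive so the limit defining pressure is a supremum of $\tfrac1m \ln Z$ (up to scale-change constants), hence each term is below it; one must be slightly careful that this is exactly true at a single scale or else absorb a harmless multiplicative constant, but at scale $\eta = \delta/3 < \delta$ combined with Lemma~\ref{presscaleindep} this is standard and is exactly the content of Lemma 3 of \cite{bowen} in the specification case. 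The only genuinely new ingredient beyond Bowen's argument is the bookkeeping of the single variable-length gap $f(n)$ and tracking where the $\inf\phi$ and $g(n)$ terms enter, which is routine.
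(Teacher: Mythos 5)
There is a genuine gap, and it is precisely at the step you flagged as the "main obstacle" and then waved away. Your final argument reduces to the claim $Z(X,T,\phi,n+f(n),\delta/3) \leq e^{(n+f(n))P(X,T,\phi)}$ for a \emph{fixed finite} length. This is not a standard fact and is false in general: the partition function is (up to scale-change constants) \emph{sub}multiplicative, so the limit defining $P(X,T,\phi,\eta)$ is approached from below by $\tfrac1m \ln Z$, i.e.\ one gets $Z(m,\eta) \geq e^{mP(\eta)}$ essentially for free, which is the wrong direction. The inequality you need, $Z(m,\eta) \leq e^{mP}$ (up to correction terms), requires \emph{super}additivity of $\ln Z$, which is exactly what a specification property buys by gluing separated sets; Bowen's Lemma 3 proves it under uniform specification and is not available here. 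Under non-uniform specification the gluing of blocks of length $m$ forces gaps of length $f(m)$, so the superadditivity is only approximate, and the correction terms $f(n)P - f(n)\inf\phi + g(n)$ in the statement are precisely the price of those gaps. Invoking the clean bound $Z(m,\delta/3)\le e^{mP}$ is therefore circular: it is (a special case of) the theorem you are trying to prove.

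The fix is to not discard the $q$-fold construction from your first paragraph — that \emph{is} the proof. Keep $n$ fixed, glue $q$ arbitrary elements of $S$ with gaps $f(n)$ to get a $(q(n+f(n)),\delta/3)$-separated set $Y$ with
\[
\sum_{y\in Y} e^{S_{q(n+f(n))}\phi(y)} \;\geq\; e^{q f(n)\inf\phi - q g(n)}\, Z(X,T,\phi,n,\delta)^q,
\]
so $Z(X,T,\phi,q(n+f(n)),\delta/3)$ dominates the right-hand side. Now take logarithms, divide by $q(n+f(n))$, and let $q\to\infty$; since $P(X,T,\phi,\delta/3)=P(X,T,\phi)$ by expansivity, this yields
\[
P(X,T,\phi) \;\geq\; \frac{\ln Z(X,T,\phi,n,\delta) + f(n)\inf\phi - g(n)}{n+f(n)},
\]
and solving for $Z(X,T,\phi,n,\delta)$ gives the stated bound. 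The $q\to\infty$ limit is what converts the a priori one-sided relationship between $Z$ and $P$ into the upper bound you want; it cannot be replaced by a single application at $q=1$.
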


\begin{proof}
Suppose that $(X,T)$ and $\phi$ are as in the theorem, denote $m = \inf \phi$, and fix any $n$. Then, choose any $k \in \mathbb{N}$ and an $(n,\delta)$-separated set $S$ so that $\sum_{x \in S} e^{S_n \phi(x)} = Z(X,T, \phi,n,\delta)$. Then, for any $x_1, \ldots, x_k \in S$, we can use non-uniform specification to choose a point $y = y(x_1, \ldots, x_k)$ which $\delta/3$-shadows $(x_1, \ldots, x_k)$ for $(n,\ldots,n)$ iterates, with gaps $(f(n), \ldots, f(n))$. Then
\[
S_{k(n+f(n))}\phi(y) \geq kmf(n) + \sum_{j=1}^{k} S_n \phi(T^{(j-1)(n + f(n))} y) \geq kmf(n) - kg(n) + \sum_{j=1}^{k} S_n \phi(x_j).
\]

We note that the set $Y = \{y(x_1, \ldots, x_k) \ : \ x_i \in S\}$ is $(k(n+f(n)),\delta/3)$-separated by definition, and so 
\begin{multline*}
Z(X, T,\phi, k(n+f(n)), \delta/3) \geq \sum_{y \in Y} e^{S_{k(n+f(n))}\phi(y)} \\
\geq e^{kmf(n) - kg(n)} \sum_{x_1, \ldots, x_k \in S} \prod_{i=1}^k e^{S_n \phi(x_i)} = e^{kmf(n) - kg(n)} Z(X, T,\phi, n, \delta)^k.
\end{multline*}

Taking logarithms, dividing by $k(n+f(n))$, and letting $k \rightarrow \infty$ yields
\[
P(X,T, \phi) = P(X,T,\phi,\delta/3) \geq \frac{\ln Z(X,T, \phi, n, \delta) + mf(n) - g(n)}{n + f(n)}.
\]
(The first equality comes from Lemma~\ref{presscaleindep}.) Now, solving for $Z(X,T,\phi,n,\delta)$ completes the proof.
%However, by (\ref{subadd}),
%\[
%|\mathcal{L}_{k(n + f(n)}(X)| \leq e^{k(n + f(n) h(X)}.
%\]
%Combining these inequalities completes the proof.

\end{proof}

\begin{corollary}\label{speccor}
If $(X,T)$ satisfies the hypotheses of Theorem~\ref{mainthm} and $\{n_k\}$ is an anchor sequence, then for every 
$\epsilon > 0$, there exists $K$ so that for all $k > K$ and $1 \leq i \leq n_k$, 
\[
Z(X,T, \phi, i, \delta) \leq e^{iP(X,T, \phi)} n_k^{\epsilon}.
\]
\end{corollary}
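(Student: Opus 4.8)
The plan is to combine the subexponential growth of the anchor sequence parameters with the partition-function bound from Theorem~\ref{specpartbdthm}. Fix $\epsilon > 0$. First I would replace $\epsilon$ by $\epsilon/3$ (say) throughout to leave room for the various error terms, and recall that by the definition of anchor sequence, we may pass to bounds $\overline{f}, \overline{g}$ (which I will just call $f, g$) at scale $\delta/3$ with $f(n_k)/\ln n_k \to 0$ and $g(n_k)/\ln n_k \to 0$; note that Theorem~\ref{specpartbdthm} is stated with hypotheses at scale $\delta/3$, which is exactly what the anchor sequence provides. Then for any $i \leq n_k$, monotonicity of $f$ and $g$ gives $f(i) \leq f(n_k)$ and $g(i) \leq g(n_k)$, so Theorem~\ref{specpartbdthm} yields
\[
Z(X,T,\phi,i,\delta) \leq e^{(i + f(n_k))P(X,T,\phi) - f(i)\inf\phi + g(n_k)}.
\]

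The next step is to absorb the three error terms into $n_k^\epsilon = e^{\epsilon \ln n_k}$. The term $e^{f(n_k) P(X,T,\phi)}$ is bounded by $e^{|P(X,T,\phi)| f(n_k)}$, and since $f(n_k)/\ln n_k \to 0$ there is a $K_1$ so that $|P(X,T,\phi)| f(n_k) \leq (\epsilon/3)\ln n_k$ for $k > K_1$. The term $e^{-f(i)\inf\phi}$ is a slight nuisance because $\inf\phi$ may be negative, so $-f(i)\inf\phi$ may be positive; but $|{-f(i)\inf\phi}| \leq |\inf\phi| f(n_k)$, so again there is a $K_2$ so that this is at most $(\epsilon/3)\ln n_k$ for $k > K_2$. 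Finally, $g(n_k)/\ln n_k \to 0$ gives a $K_3$ with $g(n_k) \leq (\epsilon/3)\ln n_k$ for $k > K_3$. Taking $K = \max(K_1, K_2, K_3)$, for all $k > K$ and all $1 \leq i \leq n_k$ we get
\[
Z(X,T,\phi,i,\delta) \leq e^{iP(X,T,\phi)} e^{\epsilon \ln n_k} = e^{iP(X,T,\phi)} n_k^\epsilon,
\]
as desired. (One should double-check the direction of the inequality $P(X,T,\phi,\delta/3) = P(X,T,\phi)$ used inside Theorem~\ref{specpartbdthm}, which is Lemma~\ref{presscaleindep} since $\delta/3 < \delta$; this is already done in that theorem's proof, so it can be cited directly.)

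There is essentially no serious obstacle here — the only thing to be careful about is the sign issue with $\inf\phi$ and, more importantly, making sure that the bound on $f$ and $g$ used is uniform in $i$ over the whole range $1 \leq i \leq n_k$ rather than just at $i = n_k$; this is exactly what monotonicity of $f, g$ buys us, together with the fact that $P(X,T,\phi)$, $\inf\phi$ are fixed constants not depending on $i$ or $k$. The slight subtlety worth flagging is that the coefficient of $i$ in the exponent is $P(X,T,\phi)$ with no error, which is what makes this a clean bound; the whole point of the anchor sequence is that along it the additive corrections $f(n_k)$, $g(n_k)$ are $o(\ln n_k)$, hence swallowed by $n_k^\epsilon$.
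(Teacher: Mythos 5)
Your proof is correct and follows essentially the same route as the paper: apply Theorem~\ref{specpartbdthm}, use monotonicity of $f,g$ to make the bound uniform over $1 \le i \le n_k$, and absorb the $o(\ln n_k)$ error terms into $n_k^{\epsilon}$ (the paper simply bundles the constants into a single factor $P(X,T,\phi)+|\inf\phi|+1$ rather than splitting $\epsilon$ into thirds). One cosmetic caveat: in your intermediate display, replacing $f(i)$ by $f(n_k)$ in the coefficient of $P(X,T,\phi)$ reverses the inequality when $P(X,T,\phi)<0$, but this is harmless since you immediately pass to the bound $e^{|P(X,T,\phi)|\,f(n_k)}$, which is all the argument needs.
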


\begin{proof}
By definition of anchor sequence, we can take $f(n)$ and $g(n)$ to be gap bounds and partial sum variation bounds at scale $\delta/3$, and can choose $K$ so that for $k > K$, $f(n_k) + g(n_k) < \frac{\epsilon}{P(X,T, \phi) + |\inf \phi| + 1} \ln n_k$. Then, for any such $k$ and $1 \leq i \leq n_k$, Theorem~\ref{specpartbdthm} (along with monotonicity of $f,g$) implies  
\begin{multline*}
Z(X,T, \phi, i, \delta) \leq e^{(i + f(i)) P(X,T,\phi) - f(i) \inf \phi + g(i)} \leq \\
e^{iP(X,T, \phi)} e^{(P(X,T, \phi) + |\inf \phi| + 1)(f(i) + g(i))} \leq e^{iP(X,T, \phi)} n_k^{\epsilon}.
\end{multline*}

\end{proof}

We now prove a somewhat similar bound under the assumption of non-uniform transitivity, which requires information about $f,g$ for all large $n$ rather than a single value.

\begin{theorem}\label{transpartbdthm}
If $(X,T)$ is an expansive dynamical system (with expansivity constant $\delta$) and non-uniform transitivity with gap bounds $f(n)$ at scale $\delta/3$, $\phi$ is a potential with partial sum variation bounds $g(n)$, and $C > 0, M \geq 3$ satisfy $f(n) + g(n) \leq \min(C\ln n,n)$ for all $n \geq M$, then for all $n \geq M$, 
\[
Z(X,T, \phi, n, \delta) \leq D e^{nP(X,T, \phi)} n^{CE},
\]
where $D,E$ are constants depending only on $X$ and $\phi$.
\end{theorem}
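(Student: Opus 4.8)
The plan is to mimic the argument of Theorem~\ref{specpartbdthm}, but since non-uniform transitivity only lets us concatenate \emph{two} orbit segments at a time (and with an uncontrolled gap $i \le f(n)$, not a prescribed one), I would build a long concatenation by repeated pairwise gluing and carefully track the total length, which is now variable. Fix $n \ge M$, and let $S$ be an $(n,\delta)$-separated set realizing $Z(X,T,\phi,n,\delta)$. Given $x_1,\dots,x_k \in S$, glue $x_1$ to $x_2$ using non-uniform transitivity at scale $\delta/3$ (gap $i_1 \le f(n)$), then glue the resulting orbit segment (of length $2n + i_1 \le 2n + f(n)$, which is still $\le$ some controlled bound so $f$ applies if needed — actually we only need the gap bound for blocks of length $\le n$, so I would instead always glue a length-$n$ block on at each stage, using the remark after the definition of non-uniform transitivity to shadow $(j,k)$-segments for $j,k \le n$). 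Iterating, I obtain a point $y = y(x_1,\dots,x_k)$ which $\delta/3$-shadows all $k$ segments with gaps $i_1,\dots,i_{k-1}$, each $\le f(n)$, so that the total length $L$ satisfies $kn \le L \le kn + (k-1)f(n) \le kn + kf(n)$.

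The key estimate is then, exactly as before,
\[
S_L \phi(y) \ge k \inf\phi \cdot f(n) - k\,g(n) + \sum_{j=1}^k S_n\phi(x_j),
\]
using that each gap contributes at least $(\inf\phi)\cdot(\text{gap length}) \ge (\inf\phi) f(n)$ if $\inf\phi < 0$ (or $\ge 0$ otherwise, so replacing $\inf\phi$ by $-|\inf\phi|$ is safe) and the partial sum variation bound $g(n)$ on each shadowed length-$n$ block. The subtlety compared to Theorem~\ref{specpartbdthm} is that the map $(x_1,\dots,x_k) \mapsto y$ need not produce an $(n,\delta)$-separated set of a \emph{fixed} length, since $L$ varies with the choice of gaps $i_j$. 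To handle this, I would group the tuples by their gap vector $(i_1,\dots,i_{k-1}) \in \{0,1,\dots,f(n)\}^{k-1}$: for each fixed gap vector the resulting points have a common length $L$ and form a $(L,\delta/3)$-separated set (distinct inputs give points separated within the first block, or within some later block, by more than $\delta/3$ — this needs the standard expansivity bookkeeping, that $\delta/3$-shadowing of $(n,\delta)$-separated points keeps them $(L,\delta/3)$... wait, more carefully: two points shadowing different input tuples with the \emph{same} gap vector disagree on some length-$n$ block where their inputs are $\delta$-separated, and $\delta/3$-shadowing there forces them $\delta/3$-separated). There are at most $(f(n)+1)^{k-1} \le (n+1)^{k-1}$ gap vectors (using $f(n) \le n$), and each contributes $\ge e^{k\inf\phi f(n) - kg(n)} Z(X,T,\phi,n,\delta)^k / (f(n)+1)^{k-1}$...

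Actually, cleaner: summing over \emph{all} tuples and all gap vectors, and bounding $Z(X,T,\phi,L,\delta/3)$ for the appropriate $L$ from below, I get
\[
\sum_{L} Z(X,T,\phi,L,\delta/3) \ \ge\ \frac{e^{k\inf\phi f(n) - kg(n)}}{(f(n)+1)^{k-1}}\, Z(X,T,\phi,n,\delta)^k,
\]
where the sum is over the (at most $(n+1)^{k-1}$, in fact $\le kf(n)+1$) distinct values of $L$ in $[kn, kn+kf(n)]$; replacing the left side by $(kf(n)+1)$ times its largest term $\le (kf(n)+1) e^{L P(X,T,\phi,\delta/3)} \le (kn+1) e^{(kn+kf(n))P}$ (assuming $P \ge 0$; the $P < 0$ case is similar with $L \ge kn$), taking logs, dividing by $k$, and letting $k \to \infty$ kills all the sub-exponential-in-$k$ factors and yields
\[
\ln Z(X,T,\phi,n,\delta) \ \le\ (n+f(n))P(X,T,\phi) - f(n)\inf\phi + g(n) + f(n)\ln(f(n)+1),
\]
using $P(X,T,\phi,\delta/3) = P(X,T,\phi)$ by Lemma~\ref{presscaleindep}. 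Now plug in $f(n)+g(n) \le C\ln n$ and $f(n) \le C\ln n$: the term $f(n)\ln(f(n)+1) \le C\ln n \cdot \ln(C\ln n + 1) = o((\ln n)^2)$, hmm — that is slightly worse than $n^{CE}$ unless I absorb it. In fact $f(n)\ln(f(n)+1) \le (f(n))^2 \le (C\ln n)^2$, and $(\ln n)^2 \le$ (const)$\cdot \ln n$ is false, so I should be more careful: actually $\ln(f(n)+1) \le \ln(n+1) \le 2\ln n$ for $n \ge M$, giving $f(n)\ln(f(n)+1) \le 2C(\ln n)^2$. To get the clean bound $n^{CE}$ I instead bound $\ln(f(n)+1) \le \ln(C\ln n+1)$ which for $n$ large is $\le \ln n$... still quadratic. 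The fix: use $f(n) \le n$ (not $C\ln n$) only for counting gap vectors but note $(f(n)+1)^{k-1}$ can be replaced: there are only $\le (kf(n)+1)$ distinct \emph{lengths} $L$, and for each length I can take the \emph{union} over gap vectors of that length — that union is still $(L, \delta/3)$-separated only if... no. So the honest exponent is $\exp(O(f(n)\ln n)) = \exp(O((\ln n)^2))$ — but the theorem claims $n^{CE} = e^{CE \ln n}$.

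The main obstacle, then, is precisely this gap-vector counting loss, and I expect the paper resolves it by a smarter gluing that does not branch over all gap vectors — e.g. by first fixing, for the given $n$, a \emph{single} good gap value via pigeonhole (some gap length $i^* \le f(n)$ that works for "many" pairs), or by only ever concatenating blocks of a fixed length with gaps chosen to make the total length land in a single residue class, so that the number of distinct $L$ is $O(1)$ rather than growing. With such a device the $(f(n)+1)^{k-1}$ factor disappears, the surviving correction is $(n+f(n))P - f(n)\inf\phi + g(n) + O(1)$, and then $f(n)+g(n) \le C\ln n$ gives $Z \le D e^{nP(X,T,\phi)} n^{CE}$ with $E$ depending on $P(X,T,\phi)$ and $\inf\phi$. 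So my plan is: (1) reduce to a single (or boundedly many) gap length(s) by pigeonhole so concatenation stays within $O(1)$ length classes; (2) run the Bowen-style sub-multiplicativity estimate; (3) take $k \to \infty$ and use Lemma~\ref{presscaleindep}; (4) substitute the hypothesis $f(n)+g(n) \le \min(C\ln n, n)$ to extract the polynomial factor $n^{CE}$ and collect the constant $D$.
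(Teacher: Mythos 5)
There is a genuine gap: the $k$-fold concatenation $y(x_1,\dots,x_k)$ at the heart of your argument cannot actually be built from non-uniform transitivity. That property (even with the remark following its definition) only produces a point shadowing \emph{two} orbit segments, each of length at most $n$, with gap at most $f(n)$. To attach a length-$n$ block to an already-glued segment of length $L_j\approx jn$ you would need a gap bounded by $f(L_j)$, not $f(n)$; worse, the glued point $z_j$ only $\delta/3$-shadows the original blocks, so shadowing $z_j$ at scale $\delta/3$ at the next step yields a point that shadows the original blocks only at scale $2\delta/3$, and after $k$ gluings the error is $k\delta/3$ and all separation is lost. So the sub-multiplicativity estimate $\sum_L Z(L,\delta/3)\ge\cdots Z(n,\delta)^k$ on which your whole computation rests is not established, and the subsequent discussion of the $(f(n)+1)^{k-1}$ branching (which, incidentally, would cost only $+\ln(f(n)+1)$ after dividing by $k$, not $+f(n)\ln(f(n)+1)$) is moot.

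The paper's resolution is the one your final paragraph gestures at (pigeonhole to a single gap value), but embedded in a \emph{doubling} recursion rather than a linear one, and, crucially, with a return to scale $\delta$ after every single gluing so that shadowing errors never accumulate. Set $n_0=n$ and $n_{k+1}=2n_k+f(n_k)$; at stage $k$ take a fresh maximal $(n_k,\delta)$-separated set $U_{n_k}$ realizing $Z(X,T,\phi,n_k,\delta)$, apply non-uniform transitivity once to each pair $(x,y)\in U_{n_k}\times U_{n_k}$, pigeonhole on the gap value (losing only a factor $f(n_k)^{-1}\ge e^{-f(n_k)}$), observe that the surviving points $z(x,y)$ are $(n_{k+1},\delta/3)$-separated, and use Lemma~\ref{sepscaleindep} and Corollary~\ref{partscaleindep} to convert back to a bound on $Z(X,T,\phi,n_{k+1},\delta)$ at the cost of a constant $M(\delta/3)$. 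This yields $Z(n_{k+1},\delta)\ge M(\delta/3)^{-1}e^{-2g(n_k)-(|m|+1)f(n_k)}Z(n_k,\delta)^2$, and the accumulated losses $\sum_{i<k}2^{k-i-1}\bigl(f(n_i)+g(n_i)\bigr)\le 2^k\sum_i 2^{-i-1}C\ln(3^i n)=2^k\,O(C\ln(9n))$ telescope, because the geometric weights beat the linear growth of $\ln(3^i n)$; dividing by $n_k\le 2^k(n+C\ln(9n))$ and letting $k\to\infty$ produces exactly the $n^{CE}$ correction. Your proposal correctly diagnoses the difficulty but does not contain this mechanism, so it does not close.
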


\begin{proof}
Suppose that $X$, $T$, $\phi$, $C$, and $M$ are as in the theorem, and fix any $n \geq M$. We use $m$ to denote $\inf \phi$. For every $j$, choose $U_j$ a $(j, \delta)$-separated set for which $\sum_{x \in U_j} e^{S_j \phi(x)} = Z(X, T, \phi, j, \delta)$. We will use non-uniform transitivity to give lower bounds on $Z(X, T, \phi, n_k, \delta)$ for a recursively defined sequence $\{n_k\}$. Define $n_0 = n$, and for $k \geq 0$ define $n_{k+1} = 2n_k + f(n_k)$. Note that all $n_k \geq M$, and so $n_{k+1} \leq 3n_k$ for all $k$, meaning that $n_k \leq 3^k n$ for all $k$. For a better bound, we see that by induction, for every $k$,
\begin{multline}\label{nkbd}
n_k = 2^k n + 2^{k-1} f(n_1) + 2^{k-2} f(n_2) + \ldots + f(n_{k-1}) \\
\leq 2^k n + 2^k C [2^{-1} \ln (3n) + 2^{-2} \ln (3^2 n) + \ldots + 2^{-(k-1)} \ln (3^k n)] \\
= 2^k (n + C\ln (9n)).
\end{multline}

Then, for every $k \geq 0$, and for any $x, y \in U_{n_k}$, use non-uniform transitivity to create a point $z(x,y)$ which $\delta/3$-shadows $(x,y)$ for $(n_k,n_k)$ iterates, with a gap $i$ of length less than or equal to $f(n_k)$. Then,
\begin{multline*}
S_{n_{k+1}} \phi (z(x,y)) \geq S_{n_k} \phi(z(x,y)) + S_{n_k} \phi(T^{i + n_k} z(x,y)) - |m|f(n_k) \\
\geq S_{n_k} \phi(x) + S_{n_k} \phi(y) - 2g(n_k) - |m|f(n_k).
\end{multline*}

This implies that
\begin{multline*}
\sum_{z(x,y)} e^{S_{n_{k+1}} \phi(z(x,y))} \geq \sum_{x,y} e^{S_{n_k} \phi(x) + S_{n_k} \phi(y) - 2g(n_k) - |m|f(n_k)} \\
= e^{-2g(n_k) - |m|f(n_k)} \left(\sum_{x \in U_{n_k}} e^{S_{n_k} \phi(x)} \right)^2 = e^{-2g(n_k) - |m|f(n_k)} Z(X, T, \phi, n_k, \delta)^2.
\end{multline*}

Then there is a single gap $i_k \leq f(n_k)$ such that if we define $Y_k$ to be the set of $(x,y)$ for which $z(x,y)$ used a gap of length $i$, then
\begin{multline}\label{transbd1}
\sum_{(x,y) \in Y_k} e^{S_{n_{k+1}} \phi(z(x,y))} \geq (f(n_k))^{-1} e^{-2g(n_k) - |m|f(n_k)} Z(X, T, \phi, n_k, \delta)^2 \\
\geq e^{-2g(n_k) - (|m|+1)f(n_k)} Z(X, T, \phi, n_k, \delta)^2.
\end{multline}

We now claim that $\{z(x,y) \ : \ (x,y) \in Y_k\}$ is $(n_{k+1}, \delta/3)$-separated. To see this, choose any unequal 
$(x,y), (x', y') \in Y_{n_{k+1}}$, and write $z = z(x,y)$ and $z' = z(x', y')$. Either $x \neq x' \in U_{n_k}$ or $y \neq y' \in U_{n_k}$. In the former case, since $U_{n_k}$ is $(n,\delta)$-separated, there exists $0 \leq j < n_k$ so that $d(T^j x, T^j x') > \delta$, and by definition of $z$ and $z'$, $d(T^j z, T^j z') > \delta/3$. In the latter case, there exists $0 \leq j < n_k$ so that $d(T^j y, T^j y') > \delta$, and again by definition of $z$ and $z'$, $d(T^{n_k + i_k + j} z, T^{n_k + i_k + j} z) > \delta/3$. So, $\{z(x,y) \ : \ (x,y) \in Y_k\}$
is $(n_{k+1}, \delta/3)$-separated as claimed. Therefore, Lemma~\ref{partscaleindep} and (\ref{transbd1}) imply
\begin{multline*}%\label{transpartbd}
Z(X, T, \phi, n_{k+1}, \delta) \geq M(\delta/3)^{-1} Z(X, T, \phi, n_{k+1}, \delta/3) \geq \sum_{(x,y) \in Y_k} e^{S_{n_{k+1}} \phi(z(x,y))}\\
\geq M(\delta/3)^{-1} e^{-2g(n_k) - (|m|+1)f(n_k)} Z(X, T, \phi, n_k, \delta)^2.
\end{multline*}

Now, by induction, $\ln Z(X, T, \phi, n_k, \delta)$ is greater than or equal to
\begin{multline}\label{transbd2}
- k \ln M(\delta/3) + 2^k \ln Z(X, T, \phi, n, \delta) - \sum_{i = 0}^{k-1} 2^{k-i-1} (2g(n_i) - (m+1)f(n_i))\\
\geq - k \ln M(\delta/3) + 2^k \ln Z(X, T, \phi, n, \delta) - 2^k \sum_{i = 0}^{k-1} 2^{-i-1} (2 + |m+1|) C \ln (3^i n)\\
= -k \ln M(\delta/3) + 2^k(\ln Z(X, T, \phi, n, \delta) - C (2 + |m+1|) \ln(9n)).  
\end{multline}

Therefore, by (\ref{nkbd}) and (\ref{transbd2}),
\[
\frac{\ln Z(X, T, \phi, n_k, \delta)}{n_k} \geq \frac{-k \ln M(\delta/3) + 2^k(\ln Z(X, T, \phi, n, \delta) - C (2 + |m+1|) \ln(9n))}{2^k(n + C\ln(9n))}.
\]

Letting $n_k \rightarrow \infty$ yields
\[
P(X,T, \phi) \geq \frac{\ln Z(X, T, \phi, n, \delta) - C (2 + |m+1|) \ln(9n)}{n + C\ln(9n)},
\]
and we can rewrite as
\[
Z(X, T, \phi, n, \delta) \leq D e^{Pn} n^{CE},
\]
(here $D = 9^{C(P(X,T, \phi) + 2 + |m-1|)}$ and $E = P(X,T, \phi) + 2 + |m-1|$), completing the proof.

\end{proof}

\begin{corollary}\label{transcor}
If $(X,T)$ and $\phi$ satisfy the hypotheses of Theorem~\ref{mainthm2} and $\{n_k\}$ is an anchor sequence, then for every $\epsilon > 0$, there exists $K$ so that for any $k > K$ and $1 \leq i \leq n_k$,
\[
Z(X, T, \phi, i, \delta) \leq e^{iP(X,T, \phi)} n_k^{\epsilon}.
\]
\end{corollary}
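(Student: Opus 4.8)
The plan is to argue exactly as in the proof of Corollary~\ref{speccor}, substituting Theorem~\ref{transpartbdthm} for Theorem~\ref{specpartbdthm}. The one new wrinkle is that Theorem~\ref{transpartbdthm} controls $Z(X,T,\phi,n,\delta)$ only for $n \geq M$, so the finitely many small values $1 \leq i < M$ must be folded into a multiplicative constant by hand.

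First I would pass to scale $\delta/3$. By Lemma~\ref{gapscaleindep}, $(X,T)$ has non-uniform transitivity at scale $\delta/3$ with gap bounds $\overline{f}(n) := C' + f(n+C')$ for a suitable constant $C'$, and since partial sum variation at any scale below $\delta$ is no larger than at scale $\delta$, $\phi$ has partial sum variation bounds $g(n)$ at scale $\delta/3$ as well. Because $f$ is increasing and, under the hypotheses of Theorem~\ref{mainthm2}, $\lim_{n \to \infty}(f(n)+g(n))/\ln n = 0$, one checks directly that $\lim_{n \to \infty}(\overline{f}(n)+g(n))/\ln n = 0$; it is this \emph{global} limit, rather than the sequential one furnished by the anchor sequence, that Theorem~\ref{transpartbdthm} requires.

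Now fix $\epsilon > 0$ and let $E$ be the constant from Theorem~\ref{transpartbdthm}, which depends only on $(X,T)$ and $\phi$ and not on the auxiliary parameter $C$ appearing there; let $D$ be the accompanying constant. Choose $C := \min\{1/2,\ \epsilon/(2E)\}$, so that $CE \leq \epsilon/2$ and $C < 1$. Using the limit from the previous paragraph, fix $M \geq 3$ with $\overline{f}(n)+g(n) \leq C\ln n$ for all $n \geq M$; since $C < 1$ and $\ln n \leq n$, this is the same as $\overline{f}(n)+g(n) \leq \min(C\ln n, n)$, so Theorem~\ref{transpartbdthm} yields $Z(X,T,\phi,i,\delta) \leq D\, e^{i P(X,T,\phi)}\, i^{CE}$ for all $i \geq M$. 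For $1 \leq i < M$, bound $Z(X,T,\phi,i,\delta)$ by the finite constant $Z_0 := \max_{1 \leq j < M} Z(X,T,\phi,j,\delta)$ and observe $Z_0 \leq \big(Z_0\, e^{M|P(X,T,\phi)|}\big)\, e^{i P(X,T,\phi)}\, i^{CE}$. Hence there is a constant $D'$, depending on $X$, $\phi$, and $\epsilon$, with $Z(X,T,\phi,i,\delta) \leq D'\, e^{i P(X,T,\phi)}\, i^{CE}$ for every $i \geq 1$. Finally, for $1 \leq i \leq n_k$, monotonicity of $t \mapsto t^{CE}$ gives $Z(X,T,\phi,i,\delta) \leq D'\, e^{i P(X,T,\phi)}\, n_k^{CE} \leq D'\, e^{i P(X,T,\phi)}\, n_k^{\epsilon/2}$; since $n_k \to \infty$, choosing $K$ with $D' \leq n_k^{\epsilon/2}$ for all $k > K$ gives $Z(X,T,\phi,i,\delta) \leq e^{i P(X,T,\phi)}\, n_k^{\epsilon}$ whenever $k > K$ and $1 \leq i \leq n_k$.

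I do not expect a genuine obstacle here, since the substantive work is already contained in Theorem~\ref{transpartbdthm}; the only point requiring care is the juggling of constants in the last step. Specifically, one must use that the exponent $CE$ of Theorem~\ref{transpartbdthm} has $E$ independent of $C$ (so it can be pushed below $\epsilon/2$), and that the residual multiplicative constants together with the $i < M$ contributions are all absorbed into $n_k^{\epsilon/2}$ once $k$ is large. This is the exact analogue of, and no harder than, the corresponding absorption in Corollary~\ref{speccor}.
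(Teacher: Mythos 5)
Your proposal is correct and follows essentially the same route as the paper: apply Theorem~\ref{transpartbdthm} with $C$ chosen so that $CE \leq \epsilon/2$, extend the resulting bound to the finitely many $i < M$ by enlarging the multiplicative constant, and then absorb that constant into $n_k^{\epsilon/2}$ for $k$ large. Your extra care with the change of scale to $\delta/3$ and with keeping $C < 1$ is harmless bookkeeping that the paper elides but does not change the argument.
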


\begin{proof}
Under the hypotheses of Theorem~\ref{mainthm2}, we may choose $M \geq 3$ so that for $n \geq M$, $f(n) + g(n) < \min((\epsilon/2E) \ln n,n)$, where $E$ is as in Theorem~\ref{mainthm2}. Then, for all $n \geq M$,
\[
Z(X, T, \phi, i, \delta) \leq D e^{iP(X,T, \phi)} i^{\epsilon/2}.
\]
This can clearly be improved to hold for all $i$ by changing $D$, i.e. there exists $D'$ so that for all $i$,
\[
Z(X, T, \phi, i, \delta) \leq D' e^{iP(X,T, \phi)} i^{\epsilon/2}.
\]
We then just choose $K$ so that $n_k^{\epsilon/2} > D'$. Then, for any $k \geq K$ and $1 \leq i \leq n_k$,
\[
Z(X, T, \phi, i, \delta) \leq D' e^{iP(X,T, \phi)} i^{\epsilon/2} < e^{iP(X,T, \phi)} n_k^{\epsilon},
\]
completing the proof.
\end{proof}

We will also need the following general lower bound on the sum of $e^{S_n \phi (x)}$ over separated sets within a set of positive measure for an equilibrium state. 

\begin{theorem}\label{measbd}
If $(X,T)$ is an expansive dynamical system (with expansivity constant $\delta$), $\phi$ is a potential with partial sum variations $g(n)$ at scale $\delta/3$, and $A \subset X$ has $\mu(A) > 0$ for some equilibrium state $\mu$ of $X$ for $\phi$, then there exists an $(n,\delta/3)$-separated subset $U$ of $X$ with
\[
\sum_{x \in U} e^{S_n \phi(x)} \geq \left(e^{nP(X,T,\phi)}\right)^{1/\mu(A)} \left(Z(X,T, \phi,n,\delta/3)\right)^{(\mu(A)-1)/\mu(A)} M^{-1} e^{-g(n) - \frac{\ln 2}{\mu(A)}},
\]
where $M = M(\delta/3)$ as defined in Lemma~\ref{sepscaleindep}.
\end{theorem}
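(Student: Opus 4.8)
The plan is to take $U$ to be a large $(n,\delta)$-separated piece of an \emph{optimal} $(n,\delta/3)$-separated set and to transport onto it a lower bound extracted from the fact that $\mu$ is an equilibrium state, using a Bowen-ball partition and keeping only the atoms that meet $A$. Concretely, write $\beta=\mu(A)$ and $Z=Z(X,T,\phi,n,\delta/3)$, and (after normalizing $\sup\phi=0$, which rescales both sides of the claimed inequality identically) choose an $(n,\delta/3)$-separated set $\tilde U$ with $\sum_{u\in\tilde U}e^{S_n\phi(u)}=Z$. Such a set is automatically maximal among $(n,\delta/3)$-separated sets — adjoining a point would strictly increase the sum — hence $(n,\delta/3)$-spanning, so the closed Bowen balls $\overline B_n(u,\delta/3)=\{y:d(T^iy,T^iu)\le\delta/3,\ 0\le i<n\}$, $u\in\tilde U$, cover $X$. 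I would refine this cover to a measurable partition $\mathcal C=\{C_u\}_{u\in\tilde U}$ with $u\in C_u\subseteq\overline B_n(u,\delta/3)$; then $S_n\phi$ has oscillation at most $g(n)$ on each $C_u$ (up to a fixed additive constant from passing between scales $\delta/3$ and $\delta$, which I suppress). Finally put $\tilde U_A=\{u\in\tilde U:\mu(C_u\cap A)>0\}$, so that $\sum_{u\in\tilde U_A}\mu(C_u)\ge\mu(A)=\beta$ since $A\subseteq\bigcup_u C_u$.

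Since each $C_u$ has diameter $<\delta$, $\mathcal C$ is generating, so $h(X,T,\mu)=h(X,T,\mu,\mathcal C)$, and the equilibrium hypothesis gives $h(X,T,\mu)+\int\phi\,d\mu=P:=P(X,T,\phi)$. Combining this with $\int S_n\phi\,d\mu=n\int\phi\,d\mu$ and $S_n\phi(u)\ge\mu(C_u)^{-1}\int_{C_u}S_n\phi\,d\mu-g(n)$, I would deduce the key inequality
\[
\sum_{u\in\tilde U}\mu(C_u)\bigl[S_n\phi(u)-\ln\mu(C_u)\bigr]\ \ge\ nP-g(n).
\]
The delicate ingredient is the bound $H(X,T,\mu,\mathcal C)\ge n\,h(X,T,\mu)$, which for a single Bowen-ball partition does \emph{not} follow from subadditivity alone; I would obtain it by running the entropy estimate through a \emph{fixed} generating partition $\mathcal B$ of diameter $<\delta/3$, so that $H\bigl(X,T,\mu,\bigvee_{i=0}^{n-1}T^{-i}\mathcal B\bigr)\ge n\,h(X,T,\mu,\mathcal B)=n\,h(X,T,\mu)$, and then comparing $\bigvee_{i=0}^{n-1}T^{-i}\mathcal B$ with $\mathcal C$: by uniform expansiveness each atom of the former meets only boundedly many of the balls $\overline B_n(u,\delta/3)$, since their centers are $(n,\delta/3)$-separated and confined to a single Bowen ball $\overline B_n(z,\delta/2)$; hence the two partitions admit a common refinement with fibers of bounded cardinality, and the lower bound $n\,h(X,T,\mu)$ passes to $H(X,T,\mu,\mathcal C)$ up to an absorbed constant. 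This comparison is, I expect, the main technical obstacle.

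It then remains to exploit the key inequality and pass to $A$. I would apply Jensen's inequality to $\ln$ and the convex combination $\sum_{u\in\tilde U_A}\mu(C_u)\cdot e^{S_n\phi(u)}/\mu(C_u)$, normalized by $\sum_{u\in\tilde U_A}\mu(C_u)\ge\beta$, and estimate the part of the sum over $\tilde U\setminus\tilde U_A$ crudely (the $S_n\phi(u)$-terms are nonpositive since $\sup\phi=0$, and the $-\ln\mu(C_u)$-terms are bounded using $\#\tilde U\le Z\,e^{-n\inf\phi}$), with the aim of absorbing that part into the advertised $\tfrac{\ln 2}{\beta}$ rather than into a term growing with $n$ — this tight control of the atoms missing $A$ is the second point requiring care. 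The target outcome is $\ln\sum_{u\in\tilde U_A}e^{S_n\phi(u)}\ge\tfrac1\beta\,nP+(1-\tfrac1\beta)\ln Z-g(n)-\tfrac{\ln 2}{\beta}$. Finally $\tilde U_A\subseteq\tilde U$ is $(n,\delta/3)$-separated, so Lemma~\ref{sepscaleindep} writes it as a union of $M=M(\delta/3)$ many $(n,\delta)$-separated — a fortiori $(n,\delta/3)$-separated — sets; taking $U$ to be the one with the largest value of $\sum_{x\in U}e^{S_n\phi(x)}$ loses at most the factor $M$, which yields the stated bound.
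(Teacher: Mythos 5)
Your overall architecture matches the paper's (Bowen-ball partition subordinate to a maximal separated set, an information estimate coming from the equilibrium property, a Jensen/log-sum split over atoms that do or do not meet $A$, and a final $M$-fold splitting via Lemma~\ref{sepscaleindep}), but the step you yourself flag as the main obstacle has a genuine gap, and your proposed repair does not close it. You need the \emph{lower} bound $H(X,T,\mu,\mathcal C)\ge n\,h(X,T,\mu)-O(1)$. Your comparison argument shows that each atom of $\mathcal Q:=\bigvee_{i=0}^{n-1}T^{-i}\mathcal B$ meets at most boundedly many atoms of $\mathcal C$ (this multiplicity bound is in fact correct, and follows from Lemma~\ref{sepscaleindep} since the relevant centers are pairwise within $d_n$-distance $\delta$). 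But that controls $H(\mathcal C\mid\mathcal Q)$, which yields $H(\mathcal C)\le H(\mathcal C\vee\mathcal Q)=H(\mathcal Q)+H(\mathcal C\mid\mathcal Q)\le H(\mathcal Q)+\ln M$ --- an \emph{upper} bound on $H(\mathcal C)$. To transfer the lower bound you would instead need $H(\mathcal Q\mid\mathcal C)\le O(1)$, i.e.\ that each Bowen ball $\overline B_n(u,\delta/3)$ meets only boundedly many atoms of $\mathcal Q$; that is false in general (a single $d_n$-ball of radius $\delta/3$ can carry on the order of $e^{nh}$ distinct $\mathcal B$-itineraries). The paper avoids the comparison entirely: since any two points lying in the same atom of $\bigvee_{m\in\mathbb Z}T^{mn}\mathcal C$ stay within $\delta/3<\delta$ under all iterates and are therefore equal, $\mathcal C$ is a \emph{generating partition for the power system} $(X,T^n,\mu)$, whence $H(X,T^n,\mu,\mathcal C)\ge h(X,T^n,\mu,\mathcal C)=h(X,T^n,\mu)=n\,h(X,T,\mu)$, the first inequality being the standard subadditivity fact that the one-block information dominates the entropy. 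This is the missing idea.

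Two further points. First, your ``crude'' treatment of the atoms missing $A$ does not stay within the advertised $\ln 2/\mu(A)$: dropping the (nonpositive) $S_n\phi(u)$ terms and bounding $\sum_{u\notin\tilde U_A}\mu(C_u)(-\ln\mu(C_u))$ by $(1-\beta)\ln\#(\tilde U\setminus\tilde U_A)$ with $\#\tilde U\le Z e^{-n\inf\phi}$ overshoots the target $(1-\beta)\ln Z$ by $(1-\beta)(-n\inf\phi)$, which is linear in $n$ unless $\phi$ is constant. You must keep the $S_n\phi(u)$ and $-\ln\mu(C_u)$ terms coupled and apply the same variational bound (Lemma 9.9 of Walters, as the paper does) to \emph{both} sub-sums; the two $-\ln$ contributions then combine into the binary entropy of the split, which is where the $\ln 2$ actually comes from. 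Second, as the theorem is used in Corollary~\ref{maincor} the separated set must lie \emph{inside} $A$; your final set consists of the centers $u$, which need only satisfy $C_u\cap A\ne\varnothing$. One more step --- replacing each such $u$ by a point of $C_u\cap A$, at a cost of $e^{-g(n)}$ and preserving $(n,\delta/3)$-separation because the original representatives were $(n,\delta)$-separated --- is needed, and is precisely where the factor $e^{-g(n)}$ in the statement arises.
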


\begin{proof}
Consider such $X$, $T$, $\phi$, $\mu$, $A$, and $n$. Choose a maximal $(n, \delta/3)$-separated subset $U$ of $X$. As in \cite{bowen} or \cite{CT2}, we can create a partition $\mathcal{P} = \{A_x\}_{x \in U}$ where for each $x \in U$ and $y \in A_x$, $y$ $\delta/3$-shadows $x$ for $n$ iterates, i.e. $d(T^i x, T^i y) < \delta/3$ for $0 \leq i < n$. 

Then, if two points are in the same element of $\bigvee_{m \in \mathbb{Z}} T^{mn} \mathcal{P}$, by expansivity they are equal, i.e. $\mathcal{P}$ is a generating partition for $(X,T^n,\mu)$. This means that 
\begin{multline}\label{eqn1}
nP(X, T, \phi) = n\left(h(X, T, \mu) + \int \phi \ d\mu\right) 
= h(X, T^n, \mu) + \int S_n \phi \ d\mu \\ = h(X, T^n, \mu, \mathcal{P}) + \int S_n \phi \ d\mu 
\leq H(X,T^n,\mu,\mathcal{P}) + \int S_n \phi \ d\mu \\
\leq \sum_{x \in U} \mu(A_x) \left[-\ln \mu(A_x) + \sup_{y \in A_x} S_n \phi(y)\right] \leq g(n) + \sum_{x \in U} \mu(A_x) \left[- \ln \mu(A_x) + S_n \phi(x)\right].
\end{multline}

We write $U' = \{x \in U \ : \ A_x \cap A \neq \varnothing\}$, and then can break up the final sum:
\begin{multline}\label{eqn2}
\sum_{x \in U} \mu(A_x) \left[-\ln \mu(A_x) + S_n \phi(x)\right] = \sum_{x \in U'} \mu(A_x) \left[-\ln \mu(A_x) + S_n \phi(x)\right] \\ + \sum_{x \in U \setminus U'} \mu(A_x) \left[-\ln \mu(A_x) + S_n \phi(x)\right].
\end{multline}

For fixed positive reals $a_1, \ldots, a_n$ and positive $p_1, \ldots, p_N$ with fixed sum $S$, 
$\sum p_i(a_i - \ln p_i)$ has maximum value $S (-\ln S + \ln\sum e^{a_i})$. (See Lemma 9.9 in \cite{walters}.)
Therefore, if we write $A' = \bigcup_{x \in U'} A_x$, then 
\begin{multline}\label{eqn3}
\sum_{x \in U'} \mu(A_x) \left[-\ln \mu(A_x) + S_n \phi(x)\right] \leq 
\mu(A') \left(-\ln \mu(A') + \ln \sum_{x \in U'} e^{S_n \phi(x)}\right).
\end{multline}

Similarly,
\begin{multline}\label{eqn4}
\sum_{x \in U \setminus U'} \mu(A_x) \left[-\ln \mu(A_x) + S_n \phi(x)\right] \\
\leq (1 - \mu(A')) \left(-\ln (1 - \mu(A')) + \ln \sum_{x \in U \setminus U'} e^{S_n \phi(x)}\right)\\
\leq (1 - \mu(A')) \left(\ln (1 - \mu(A')) + \ln Z(X, T, n, \phi, \delta/3)\right).
\end{multline}

Combining (\ref{eqn1})-(\ref{eqn4}) yields

\begin{multline*}
nP(X, T,\phi) \leq \mu(A') \left(-\ln \mu(A') + \ln \sum_{x \in U'} e^{S_n \phi(x)}\right) + \\
(1 - \mu(A')) \left(\ln (1 - \mu(A')) + \ln Z(X, T, \phi, n, \delta/3)\right)\\ \leq \ln 2 + \mu(A') \ln \sum_{x \in U'} e^{S_n \phi(x)} + (1 - \mu(A')) \ln Z(X, T, \phi,n, \delta/3).
\end{multline*} 

Finally, we solve for $\ln \sum_{x \in U'} e^{S_n \phi(x)}$:
\begin{multline}\label{bigeqn}
\ln \sum_{x \in U'} e^{S_n \phi(x)} \geq \frac{nP(X, T,\phi)}{\mu(A')} - \frac{1 - \mu(A')}{\mu(A')} \ln Z(X, T, \phi, n, \delta/3) - \frac{\ln 2}{\mu(A')}\\
%\\ = nP(X, \phi) - \frac{1 - \mu(A')}{\mu(A')} (\ln Z(X, n, \phi, \delta/3) - nP(X, \phi)) - \frac{\ln 2}{\mu(A')}\\
\geq \frac{nP(X,T, \phi)}{\mu(A)} - \frac{1 - \mu(A)}{\mu(A)} \ln Z(X, T, \phi, n, \delta/3) - \frac{\ln 2}{\mu(A)}.
\end{multline}
(Recall here that $\mu(A') \geq \mu(A)$.)

Now, by Lemma~\ref{sepscaleindep}, we can write $U'$ as the union of $M = M(\delta/3)$ sets which are each $(n, \delta)$-separated. Then (\ref{bigeqn}) implies that there must exist one, call it $U''$, so that
\begin{equation}\label{bigeqn3}
\sum_{x \in U''} e^{S_n \phi(x)} \geq M^{-1} \left(e^{nP(X,T, \phi)}\right)^{1/\mu(A)} \left(Z(X,T,\phi,n,\delta/3)\right)^{(\mu(A) - 1)/\mu(A)} e^{-\frac{\ln 2}{\mu(A)}}.
\end{equation}

Recall that for every $x \in U''$, $A_x \cap A \neq \varnothing$. Therefore, we can define a new set $U'''$ which contains a single point from each $A_x \cap A$ for $x \in U''$. Recall that $U''$ was $(n, \delta)$-separated, and that every point of $A_x$ $\delta/3$-shadows $x$ for $n$ iterates; therefore, $U'''$ is $(n, \delta/3)$-separated. Then by (\ref{bigeqn3}), 
\begin{multline*}%\label{bigeqn4}
\sum_{x \in U'''} e^{S_n \phi(x)} \geq \sum_{x \in U''} e^{S_n \phi(x) - g(n)}\\
\geq M^{-1} \left(e^{nP(X,T, \phi)}\right)^{1/\mu(A)} \left(Z(X,T,\phi,n,\delta/3)\right)^{(\mu(A) - 1)/\mu(A)} e^{-g(n) - \frac{\ln 2}{\mu(A)}},
\end{multline*}
completing the proof.

\end{proof}

The following is an immediate corollary of Theorem~\ref{measbd} and Corollaries~\ref{partscaleindep}, \ref{speccor}, and \ref{transcor}. 

\begin{corollary}\label{maincor}
If $(X,T)$ and $\phi$ satisfy the hypotheses of either Theorem~\ref{mainthm} or \ref{mainthm2} and $\{n_k\}$ is an anchor sequence, then for every $\epsilon > 0$, there exists $K$ so that for any $A \subset X$ with $\mu(A) > 1/2$ for some equilibrium state $\mu$ of $X$ for $\phi$, any $k \geq K$, and any $1 \leq i \leq n_k$, there is an $(i,\delta/3)$-separated subset $T$ of $A$ with
\[
\sum_{x \in T} e^{S_i \phi(x)} \geq n^{-\epsilon} e^{iP(X,T, \phi)}.
\]
\end{corollary}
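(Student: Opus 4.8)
Plan of attack. The statement is explicitly labeled an immediate corollary of Theorem~\ref{measbd} together with Corollaries~\ref{partscaleindep}, \ref{speccor}, and \ref{transcor}, so the plan is simply to feed the bounds from the latter three into the lower bound produced by the former and simplify. I would start by fixing $\epsilon > 0$ and an anchor sequence $\{n_k\}$, and apply Theorem~\ref{measbd} with $A$ (noting $\mu(A) > 1/2$) to obtain an $(i,\delta/3)$-separated subset $U \subseteq X$ — and in fact, re-examining the proof of Theorem~\ref{measbd}, the set $U'''$ it produces lies inside $A$, which is what gives the ``subset of $A$'' claim — satisfying
\[
\sum_{x \in U} e^{S_i \phi(x)} \geq \left(e^{iP(X,T,\phi)}\right)^{1/\mu(A)} \left(Z(X,T,\phi,i,\delta/3)\right)^{(\mu(A)-1)/\mu(A)} M^{-1} e^{-g(i) - \frac{\ln 2}{\mu(A)}}.
\]

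Next I would control the three problematic factors. Since $(\mu(A)-1)/\mu(A) < 0$, the factor $Z(X,T,\phi,i,\delta/3)^{(\mu(A)-1)/\mu(A)}$ is a \emph{negative} power of the partition function, so I need an \emph{upper} bound on $Z(X,T,\phi,i,\delta/3)$; Corollary~\ref{partscaleindep} gives $Z(X,T,\phi,i,\delta/3) \leq M Z(X,T,\phi,i,\delta)$, and then Corollary~\ref{speccor} (under the hypotheses of Theorem~\ref{mainthm}) or Corollary~\ref{transcor} (under the hypotheses of Theorem~\ref{mainthm2}) gives $Z(X,T,\phi,i,\delta) \leq e^{iP(X,T,\phi)} n_k^{\epsilon'}$ for $k$ large and $1 \leq i \leq n_k$, for any prescribed $\epsilon'$. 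Combining, $Z(X,T,\phi,i,\delta/3) \leq M e^{iP(X,T,\phi)} n_k^{\epsilon'}$. Substituting into the displayed inequality, the $e^{iP(X,T,\phi)}$ powers combine as $(1/\mu(A)) + (\mu(A)-1)/\mu(A) = 1$, leaving exactly $e^{iP(X,T,\phi)}$, while the leftover factors are $n_k^{\epsilon'(\mu(A)-1)/\mu(A)}$, a bounded power of $M$, and $e^{-g(i)-\ln 2/\mu(A)}$. Since $\mu(A) > 1/2$ the exponent $(1 - \mu(A))/\mu(A) < 1$, so $n_k^{\epsilon'(\mu(A)-1)/\mu(A)} \geq n_k^{-\epsilon'}$.

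Finally I would absorb everything into a single $n_k^{-\epsilon}$. The remaining multiplicative losses are: $n_k^{-\epsilon'}$ from the partition-function bound; the constant $M^{-1} \cdot M^{(\mu(A)-1)/\mu(A)}$, which is at most $1$ in absolute size up to a fixed power of $M$; the factor $e^{-\ln 2/\mu(A)} \geq e^{-2\ln 2} = 1/4$; and $e^{-g(i)}$. For the last, since $\{n_k\}$ is an anchor sequence, $g(n_k)/\ln n_k \to 0$, and by monotonicity $g(i) \leq g(n_k)$ for $i \leq n_k$, so $e^{-g(i)} \geq e^{-g(n_k)} \geq n_k^{-\epsilon''}$ for $k$ large, any prescribed $\epsilon''$. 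All constant factors ($M$-power, $1/4$) are eventually dominated by $n_k^{-\epsilon'''}$ as well. Choosing $\epsilon', \epsilon'', \epsilon'''$ small enough that their sum is at most $\epsilon$, and $K$ large enough that all the ``$k$ large'' conditions hold simultaneously, yields $\sum_{x \in U} e^{S_i \phi(x)} \geq n_k^{-\epsilon} e^{iP(X,T,\phi)}$, which is the claim (with $T = U$ in the notation of the statement).

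The only place requiring genuine care — rather than pure bookkeeping — is confirming that the separated set delivered by Theorem~\ref{measbd} can be taken inside $A$; this is exactly the role of the set $U'''$ constructed at the end of that proof, so no new argument is needed, but it must be invoked rather than the bare inequality. Everything else is a matter of tracking exponents and choosing the auxiliary $\epsilon$'s and threshold $K$, which I would not belabor.
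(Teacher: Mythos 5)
Your proposal is correct and follows exactly the chain the paper intends (the paper gives no written proof beyond declaring the result an immediate consequence of Theorem~\ref{measbd} and Corollaries~\ref{partscaleindep}, \ref{speccor}, and \ref{transcor}): feed the upper bound $Z(X,T,\phi,i,\delta/3) \leq M\, e^{iP(X,T,\phi)} n_k^{\epsilon'}$ into the negative power appearing in Theorem~\ref{measbd}, note the exponents of $e^{iP(X,T,\phi)}$ sum to $1$, and absorb the constants and $e^{-g(i)}$ into $n_k^{-\epsilon}$ via the anchor-sequence property. You also correctly flag the one non-bookkeeping point, namely that the separated set $U'''$ from the proof of Theorem~\ref{measbd} actually lies in $A$, which is needed for the ``subset of $A$'' claim.
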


We may now prove our main results. 

\begin{proof}[Proof of Theorems~\ref{mainthm} and Theorem~\ref{mainthm2}]
Choose $X$ and $\phi$ satisfying the hypotheses of either Theorem~\ref{mainthm} or \ref{mainthm2}, and a corresponding anchor sequence 
$\{n_k\}$. Define $m = \inf \phi$. Suppose for a contradiction that $X$ has more than one equilibrium state for $\phi$. Then, as noted in the introduction, $X$ has ergodic equilibrium states $\mu \neq \nu$ for $\phi$, and $\mu \perp \nu$. Then, there exists measurable $R \subset X$ with $\mu(R) = \nu(R^c) = 1$. Since $\mu, \nu$ are Borel measures, there exist open sets $U \supseteq R$ and $U' \supseteq R^c$ so that $\mu(U), \nu(U') < \frac{1}{5}$. 
We write $\eta = \min(\delta/9, d(U^c, U'^c)/3) > 0$.

Define $W \subset X$ to be the set of $x \in X$ for which
\[
M^+(\chi_{U}) = \sup_N \frac{1}{N} \sum_{n=0}^{N-1} \chi_U(T^n x) \leq 2\mu(U) < \frac{2}{5}.
\]
In other words, for every point of $W$ and any $i$, fewer than $\frac{2}{5}$ of its first $i$ iterates under $T$ are in $U$. 
By Corollary~\ref{maxergcor} (to the Maximal Ergodic Theorem), $\displaystyle \mu(W) \geq 1 - \frac{\int \chi_U \ d\mu}{2\mu(U)} = \frac{1}{2}$. 

Similarly, we define $V \subset X$ to be the set of $x \in X$ for which 
\[
M^- (\chi_{T^m U'}) = \sup_N \frac{1}{N} \sum_{n=0}^{N-1} \chi_{T^m U'}(T^{-n} x) \leq 2\nu(U') < \frac{2}{5}.
\]
For every point of $V$ and any $i$, fewer than $\frac{2}{5}$ of its first $i$ iterates under 
$T^{-1}$ are in $U'$. 
By Corollary~\ref{maxergcor} (applied to $T^{-1}$), $\displaystyle \mu(V) \geq 1 - \frac{\int \chi_{T^m U'} \ d\nu}{2\nu(U')} = \frac{1}{2}$. 

By Corollary~\ref{maincor}, there exists $K$ so that for $k \geq K$ and all $j \leq n_k$, we can define $(j, \delta/3)$-separated sets 
$V_j \subset V$ and $W_j \subset W$ for which 
 
\begin{equation}\label{VWbd}
\sum_{x \in V_j} e^{S_j\phi(x)}, \sum_{x \in W_j} e^{S_j\phi(x)} \geq e^{jP(X,T, \phi)} n_k^{-1/5}.
\end{equation}

Whether $(X,T)$ and $\phi$ satisfied the hypotheses of Theorem~\ref{mainthm} or \ref{mainthm2}, by Lemmas~\ref{gapscaleindep} and \ref{sumscaleindep} we may assume that $(X,T)$ satisfies non-uniform transitivity at scale $\eta$ with gap bounds $f(n)$ and that $\phi$ has partial sum variation bounds $g(n)$ at scale $\eta$. The final step is to use non-uniform transitivity of $X$ to create an $(n_k,\eta)$-separated set by shadowing orbit segments from various $V_i$ and $W_j$. For large $k$, the sum of $e^{S_{n_k} \phi(x)}$ over this set will be large enough to contradict Corollary~\ref{speccor} or Corollary~\ref{transcor}.

By definition of anchor sequence, we can increase $K$ so that for any $k \geq K$, $f(n_k), g(n_k) < \frac{1}{5(P(X,T, \phi) + |m| + 2)} \ln n_k$. Choose any $k \geq K$ and define $n := n_k$ for ease of notation. Then, for any integer $j$ in $[1, \frac{n - f(n)}{2f(n)}]$, any $v \in V_{2jf(n)}$, and any $w \in W_{n - f(n) - 2jf(n)}$, we use non-uniform transitivity to create a point $x(j,v,w)$ which $\eta$-shadows $(v,w)$ for $(2j f(n), n - f(n) - 2j f(n))$ iterates with gap $i \leq f(n)$. We first note that for all $j,v,w$,
\begin{multline*}%\label{partbd2}
S_n \phi(x(j,v,w)) \geq \\
S_{2jf(n)} \phi(v) + S_{n - (2j+1)f(n)} \phi(w) - |m| f(n) - g(2jf(n)) - g(n - (2j+1)f(n)) \\ 
\geq S_{2jf(n)} \phi(v) + S_{n - (2j+1)f(n)} \phi(w) - |m| f(n) - 2g(n)\\
\geq S_{2jf(n)} \phi(v) + S_{n - 2jf(n) - f(n)} \phi(w) - 1/5 \ln n.
\end{multline*}

This implies that for any $j$,
\begin{multline*}
\sum_{v,w} e^{S_n \phi (x(j,v,w))} \geq n^{-1/5} \left(\sum_v e^{S_{2jf(n)} \phi(v)}\right) 
\left(\sum_w e^{S_{n - 2jf(n) - f(n)} \phi(w)}\right)\\ 
\stackrel{(\ref{VWbd})}{\geq} e^{P(X,T,\phi)(n - f(n))} n^{-3/5} \geq e^{nP(X,T,\phi)} n^{-4/5}.
\end{multline*}

Then there exists a set $T(j)$ of pairs $(v,w)$ so that all use the same gap $i_j \leq f(n)$, and
\begin{equation}\label{partbd2.5}
\sum_{v,w \in T(j)} e^{S_n \phi(x(j,v,w))} \geq (f(n))^{-1} e^{nP(X,T,\phi)} n^{-4/5}.
\end{equation}

We claim that the set $Z = \bigcup_{j=1}^{\lfloor (n-f(n))/2f(n) \rfloor} \{x(j,v,w) \ : \ (v,w) \in T(j)\}$ is $(n,\eta)$-separated. To see this, choose any triples $(j,v,w) \neq (j',v',w')$ with $(v,w) \in T(j)$ and $(v',w') \in T(j')$. We break into the cases $j = j'$ and $j \neq j'$, and for brevity write $x = x(j,v,w)$ and $x' = x(j',v',w')$.

If $j = j'$, then either $v \neq v'$ or $w \neq w'$, and since $V_i$ and $W_i$ are $(i, \delta/3)$-separated for all $i$, either $d(T^k v, T^k v') > \delta/3$ for some $0 \leq k < 2j f(n)$ or $d(T^k w, T^k w') > \delta/3$ for some $0 \leq k < n - f(n) - 2j f(n)$. In the first case, since $x$ $\eta$-shadows $v$ for its first $2jf(n)$ iterates and $x'$ $\eta$-shadows $v'$ for its first $2j f(n)$ iterates (recall that $j = j'$), $d(T^k x, T^k x') > \delta/3 - 2\eta \geq \eta$. The second case is trivially similar, using the $\eta$-shadowing of $w$ and $w'$ and the fact that since $(v,w), (v', w') \in T(j)$, the gaps used for $x$ and $x'$ are equal.

Now suppose that $j \neq j'$, and without loss of generality assume $j < j'$. Recall that $i_j$ denotes the gap used in the construction of $x$. Then by definition of $x$, $T^{2j f(n) + i_j} x$ $\eta$-shadows $w$ for $n - 2j f(n) - f(n) \geq (2j' - 2j) f(n) - i_j + 1$ iterates. Since $w \in W_{n - f(n) - 2jf(n)} \subset W$, fewer than $\frac{2}{5} ((2j' - 2j) f(n) - i_j + 1)$ of the first $(2j' - 2j) f(n) - i_j + 1$ iterates of $w$ are in $U$. Therefore, more than a proportion of $\frac{3}{5}$ of the points $\{T^m x \ : \ 2j f(n) + i_j \leq m \leq 2j'f(n)\}$ are within distance $\eta$ of $U^c$.

Similarly, by definition, $x'$ $\eta$-shadows $v'$ for $2j' f(n) \geq (2j' - 2j) f(n) - i_j + 1$ iterates. Since $v' \in V_{2j'f(n)}$, $T^{2j'f(n)} v' \in V$, and so the proportion of the first 
$(2j' - 2j) f(n) - i_j + 1$ iterates under $T^{-1}$ of $T^{2j'f(n)} v'$ which are in $U'$ is less than $\frac{2}{5}$. Therefore, more than a proportion of $\frac{3}{5}$ of the points $\{T^m x' \ : \ 2j f(n) + i_j \leq m \leq 2j'f(n)\}$ are within distance $\eta$ of $U'^c$. So, there exists $k \in [2j f(n) + i_j, 2j'f(n)]$ so that $d(T^k x, U^c), d(T^k x', U'^c) < \eta$. By definition of $\eta$, $d(U^c, U'^c) \geq 3\eta$. Therefore, $d(T^k x, T^k x') > \eta$. 

In either case, we've shown that there exists $0 \leq k < n$ for which $d(T^k x, T^k x') > \eta$, and so $Z$ is $(n, \eta)$-separated as claimed. Then,
\begin{multline*}
\sum_{x \in Z} e^{S_n \phi(x)} = \sum_{j = 1}^{\lfloor (n-f(n))/2f(n) \rfloor} \sum_{(v,w) \in T(j)} e^{S_n \phi(x(j,v,w))}\\
\stackrel{(\ref{partbd2.5})}{\geq} \frac{n-f(n)}{2f(n)^2} e^{nP(X,T,\phi)} n^{-4/5}.
\end{multline*}

As before, we can use Lemma~\ref{sepscaleindep} to pass to $T' \subseteq T$ which is $(n, \delta)$-separated and for which 
\[
\sum_{x \in T'} e^{S_n \phi(x)} \geq \frac{n - f(n)}{2f(n)^2} (M(\eta))^{-1} e^{n P(X,T, \phi)} n^{-4/5},
\]
implying that
\[%\label{finalbd}
Z(X,T,\phi,n,\delta) \geq \frac{n - f(n)}{2f(n)^2} (M(\eta))^{-1} e^{n P(X,T, \phi)} n^{-4/5}.
\]

However, this will contradict Corollary~\ref{speccor} or \ref{transcor} for large $k$. Therefore, our original assumption of multiple equilibrium states on $X$ was false, and $X$ has a unique equilibrium state, which we denote by $\mu$.\\

It remains to show that $\mu$ is fully supported, and so for a contradiction assume that there is a nonempty open set $U \subset X$ with $\mu(U) = 0$. Then the set $Y$ of points whose orbits under $T$ never visit $U$ has $\mu(Y) = 1$, and $Y$ contains some open ball $B_{\rho}(y)$. Define 
$\eta = \min(\delta/9, \rho/3)$. By the definition of anchor sequence, there exists $K$ so that for $k \geq K$, $f(n_k), g(n_k) < \frac{1}{3(3P(X, T, \phi) + 3|m| + 2)} \ln n_k$.

By Theorem~\ref{measbd}, for every $n$ there exists an $(n, \delta/3)$-separated set $Y_n \subseteq Y$ with 
\begin{equation}\label{Ybd}
\sum_{x \in Y_n} e^{S_n \phi(x)} \geq C e^{nP(X,T, \phi) - g(n)},
\end{equation}
where $C = (2M(\delta/3))^{-1}$.

Now, we will again use non-uniform transitivity to obtain a contradiction to one of Corollary~\ref{speccor} or Corollary~\ref{transcor}. Choose $k \geq K$ and denote $n := n_k$. Then, for any integer $j$ in $[1, \frac{n - 2f(n)}{2f(n)}]$, any $v \in U_{2jf(n)}$, and any $w \in U_{n - (2j+2)f(n) - 1}$, we use non-uniform transitivity to choose $z(j,v,w) \in X$ which $\eta$-shadows $(v,y,w)$ for $(2jf(n),1,n - (2j+2)f(n) - 1)$ iterates, with gaps $(i,i')$ both less than or equal to $f(n)$. We first note that for all $j,v,w$,
\begin{multline*}%\label{partbd3}
S_n \phi(z(j,v,w)) \geq \\
S_{2jf(n)} \phi(v) + S_{n - (2j+2)f(n) - 1} \phi(w) - |m|(2f(n)+1) - g(2jf(n)) - g(n - (2j+2)f(n) - 1) \\ 
\geq S_{2jf(n)} \phi(v) + S_{n - (2j+2)f(n) - 1} \phi(w) - |m|(2f(n) + 1) - 2g(n)\\
\geq S_{2jf(n)} \phi(v) + S_{n - (2j+2) f(n) - 1} \phi(w) - 1/3 \ln n.
\end{multline*}

Then, for any $j$,
\begin{multline*}
\sum_{v,w} e^{S_n \phi(z(j,v,w))} \geq n^{-1/3} \left( \sum_{v} e^{S_{2jf(n)} \phi(v)} \right) 
\left( \sum_{w} e^{S_{n - (2j+2)f(n) - 1} \phi(w)} \right)\\
\stackrel{(\ref{Ybd})}{\geq} C^2 e^{(n - 2f(n) - 1)P(X,T,\phi) - g(2jf(n)) - g(n - (2j + 2) f(n) - 1)} n^{-1/3}\\
\geq C^2 e^{(n - 2f(n) - 1)P(X,T,\phi) - 2g(n)} n^{-1/3} \geq C^2 e^{nP(X,T,\phi)} n^{-2/3}. 
\end{multline*}

Then there exists a set $U(j)$ of pairs $(v,w)$ so that all use the same gaps $i_j, i'_j \leq f(n)$, and \begin{equation}\label{partbd3.5}
\sum_{v,w \in U(j)} e^{S_n \phi(z(j,v,w))} \geq (f(n))^{-2} C^2 e^{nP(X,T,\phi)} n^{-2/3}.
\end{equation}

We claim that the set $Z = \bigcup_{j=1}^{\lfloor (n-f(n))/2f(n) \rfloor} \{x(j,v,w) \ : \ (v,w) \in U(j)\}$ is $(n,\eta)$-separated. To see this, choose any triples $(j,v,w) \neq (j',v',w')$ with $(v,w) \in U(j)$ and $(v',w') \in U(j')$. We break into the cases $j = j'$ and $j \neq j'$, and for brevity write $z = z(j,v,w)$ and $z' = z(j',v',w')$. If $j = j'$, then the proof that there exists $0 \leq k < n$ for which $d(T^k z, T^k z') > \eta$ is the same as was done above in the proof of uniqueness of $\mu$ (again, recall that $z$ and $z'$ both use the same gap $j$.)

If $j \neq j'$, then without loss of generality we assume $j < j'$, and recall that $i_j$ denotes the first gap used for $z$. Then by definition of $z$, 
$d(T^{2jf(n) + i_j + 1} z, y) < \eta$.
Similarly, since $2jf(n) + i_j + 1 < 2j'f(n)$, $d(T^{2jf(n) + i_j + 1} z', T^{2j f(n) + i_j + 1} v) < \eta$ by definition of $z'$.
Recall that $v \in Y$, and so $T^{2j f(n) + i_j + 1} v \notin U \Longrightarrow d(T^{2j f(n) + i_j + 1} v, y) > \rho \geq 3\eta$. 
Then, $d(T^{2jf(n) + i_j + 1} z', y) > 2\eta$, and so $d(T^{2jf(n) + i_j + 1} z, T^{2jf(n) + i_j + 1} z') > \eta$, completing the proof that $Z$ is 
$(n, \eta)$-separated. Then,
\begin{multline*}%\label{finalbd2}
\sum_{x \in Z} e^{S_n \phi(x)} = \sum_{j = 1}^{\lfloor (n-f(n))/2f(n) \rfloor} \sum_{(v,w) \in U(j)} e^{S_n \phi(z(j,v,w))}\\
\stackrel{(\ref{partbd3.5})}{\geq} \frac{n - f(n)}{2f(n)^3} C^2 e^{nP(X,T,\phi)} n^{-2/3}. 
\end{multline*}

Again we use Lemma~\ref{sepscaleindep} to pass to $Z' \subseteq Z$ which is $(n, \delta)$-separated and for which 
\[
\sum_{x \in Z'} e^{S_n \phi(x)} \geq \frac{n - f(n)}{2f(n)^3} C^2 M(\eta)^{-1} e^{nP(X,T,\phi)} n^{-2/3},
\]
implying that
\[
Z(X,T,\phi,n,\delta) \geq \frac{n - f(n)}{2f(n)^3} C^2 M(\eta)^{-1} e^{nP(X,T,\phi)} n^{-2/3}.
\]

However, this will contradict Corollary~\ref{speccor} or \ref{transcor} for large enough $k$. Therefore, our assumption was incorrect and $\mu$ is fully supported.

\end{proof}

%\begin{remark} 
%The same techniques used in the proofs of Theorems~\ref{langbd2} and Theorem~\ref{mainthm2} would allow a version of Theorem~\ref{mainthm} assuming a weaker version of non-uniform two-sided specification, where one only assumes that any finite set of words can be combined for a single choice of gaps which are each less than or equal to the corresponding thresholds for each consecutive pair of words. This would cause an extra factor of $f(n)$ in the upper bound from Theorem~\ref{langbd} and of $(f(n))^3$ in the denominator in the right-hand side of (\ref{toomanywords}), which do not affect the proof. We did not explicitly state this result because we did not want to add to the already large list of subtly different properties considered in this paper.
%\end{remark}

\section{Examples}\label{examples}

Here we present some examples of $(X,T)$ and $\phi$ satisfying our hypotheses for which we believe our results to give the first proof of uniqueness of equilibrium state. We begin with $(X,T)$ with weakened specification properties. The following class of subshifts is defined in \cite{stanley}, which as usual are endowed with $T$ the left shift map.

\begin{example}
Given any alphabet $A = \{0,1,\ldots,k\}$ and increasing subadditive $h: \mathbb{N} \rightarrow \mathbb{N}$, the \textbf{bounded density shift} associated to $k$ and $h$, denoted $X_{k,h}$, is the set of all $x \in A^{\mathbb{Z}}$ so that for all $i \in \mathbb{Z}$ and $n \in \mathbb{N}$, $x(i) + \ldots + x(i + n - 1) \leq h(n)$.
\end{example}

By subadditivity, for any bounded density shift, $h(n)/n$ approaches some constant $\alpha$ (called the gradient), and $h(n) \geq n\alpha$ for all $n$. It was shown in \cite{stanley} that $X_{k,h}$ has specification if and only if $\alpha > 0$ and $h(n) - n\alpha$ is bounded.

\begin{theorem}\label{bdspec}
If $h(n) = n \alpha + e(n)$, where $e(n)$ is increasing, then $(X_{k,h},T)$ has non-uniform specification with gap bounds $f(n) := 2e(n)/\alpha$.
\end{theorem}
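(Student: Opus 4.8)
The plan is to construct the shadowing point in the most naive possible way: concatenate the required words, fill every gap with the symbol $0$, pad with $0$'s on both sides, and then verify directly that the resulting bi-infinite sequence obeys the density constraint defining $X_{k,h}$. Since $h$ is subadditive and increasing with $h(n)/n\to\alpha$, Fekete's lemma gives $h(n)\ge n\alpha$, so $e(n)=h(n)-n\alpha\ge 0$; and since $\alpha>0$ (which is implicit in the statement — one checks that $\alpha=0$ makes non-uniform specification outright fail), $f(n)=2e(n)/\alpha$ is a genuine non-negative increasing function after rounding up. I would fix a scale $\eta>0$ small enough that $d(x,y)<\eta$ forces $x(0)=y(0)$; for a subshift this is an expansivity constant, and $z$ $\eta$-shadowing $x_i$ for $n_i$ iterates at a prescribed position is the same as $z$ carrying the word $w_i:=x_i(0)\cdots x_i(n_i-1)\in\mathcal{L}(X_{k,h})$ at that position.

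So, given $x_1,\dots,x_k\in X_{k,h}$, lengths $n_1,\dots,n_k$, and gaps $m_1,\dots,m_{k-1}$ with $m_i\ge\max(f(n_i),f(n_{i+1}))$, I would take $z$ to be the point whose coordinates read $w_1\,0^{m_1}\,w_2\,0^{m_2}\cdots 0^{m_{k-1}}\,w_k$ with $w_1$ starting at coordinate $0$ and all other coordinates equal to $0$. This $z$ visibly $\eta$-shadows the $x_i$ for $(n_i)$ iterates with gaps $(m_i)$, so the whole content of the proof becomes the claim $z\in X_{k,h}$, i.e. that every length-$n$ window $W$ of $z$ satisfies $\sum_{j\in W}z(j)\le h(n)$. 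If $W$ meets no block $w_i$, its sum is $0$; if it meets exactly one block, its nonzero part is a subword of some $w_i$ of length $\ell\le n$, which occurs in a point of $X_{k,h}$, so its sum is at most $h(\ell)\le h(n)$ by monotonicity.

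The substantial case is when $W$ meets blocks $w_a,\dots,w_b$ with $a<b$. Writing $p:=|W\cap w_a|\le n_a$ and $q:=|W\cap w_b|\le n_b$, the window also contains all of $w_{a+1},\dots,w_{b-1}$ and all of the gaps $0^{m_a},\dots,0^{m_{b-1}}$, whence
\[
\sum_{j\in W}z(j)\le h(p)+\sum_{i=a+1}^{b-1}h(n_i)+h(q)=\alpha\Big(p+\sum_{i=a+1}^{b-1}n_i+q\Big)+\Big(e(p)+\sum_{i=a+1}^{b-1}e(n_i)+e(q)\Big),
\]
while $n=|W|\ge p+\sum_{i=a+1}^{b-1}n_i+q+\sum_{i=a}^{b-1}m_i$. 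Using that $e$ is increasing together with $m_i\ge f(n_i)$ for $a\le i\le b-1$ and $m_{b-1}\ge f(n_b)$, each $e$-term is at most $\tfrac{\alpha}{2}m_i$ for a suitable $i$ — with the single gap $m_{b-1}$ charged twice, once for $e(n_{b-1})$ and once for $e(n_b)$ — so that $e(p)+\sum e(n_i)+e(q)\le\alpha\sum_{i=a}^{b-1}m_i$. Plugging this in yields $\sum_{j\in W}z(j)\le\alpha n\le h(n)$, completing the argument; non-uniform transitivity comes out as the special case $k=2$.

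The only delicate point is the estimate in this last case: one must verify that every intermediate gap lies inside $W$ in full, so the lower bound on $n$ is valid, and one must notice that the factor $2$ in $f(n)=2e(n)/\alpha$ is precisely what is needed to absorb the double-counting of the terminal gap $m_{b-1}$, which has to pay for the error terms of both $w_{b-1}$ and $w_b$. I expect this bookkeeping — and keeping straight which of $m_i\ge f(n_i)$ and $m_i\ge f(n_{i+1})$ is used where — to be the main, though not deep, obstacle.
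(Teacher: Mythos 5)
Your proposal is correct and follows essentially the same route as the paper: reduce shadowing to word concatenation, take the point $w_1 0^{m_1}\cdots 0^{m_{k-1}}w_k$ padded by zeros, and verify the density constraint on each window by bounding the letter sums via $h(\ell)=\alpha\ell+e(\ell)$ and absorbing the $e$-terms into $\alpha\sum m_i$, with the factor $2$ in $f$ covering the double charge on a gap from the words on either side of it. The paper's bookkeeping uses the equivalent inequality $m_i\ge (e(n_i)+e(n_{i+1}))/\alpha$ and telescopes, but this is the same estimate as your charging scheme.
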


\begin{proof}
We first recall that since $(X_{k,h}, T)$ is a subshift, it is expansive for a constant $\delta$ where $d(x,y) \leq \delta \Longrightarrow x(0) = y(0)$. This means that $\delta$-shadowing any $x$ for $n$ iterates is the same as agreeing with $x$ for $n$ letters. This means that the claimed non-uniform specification is implied by the following: for any $w_1, \ldots, w_k \in \mathcal{L}(X_{k,h})$, with lengths $n_1, \ldots, n_k$, and for any $m_1$, $\ldots$, $m_{k-1}$ with $m_i \geq \max(2e(n_i)/\alpha, 2e(n_{i+1})/\alpha) \newline \geq (e(n_i) + e(n_{i+1}))/\alpha$, the word $w = w_1 0^{m_1} w_2 \ldots 0^{m_{k-1}} w_k$ is in $\mathcal{L}(X_{k,h})$.  

Consider any such $(w_i)$, $(n_i)$, and $(m_i)$. It suffices to show that for every subword $v$ of $w$, the sum of the letters of $v$ is less than or equal to $h(|v|)$. Since $h$ is nondecreasing, it clearly suffices to consider only the case where $v$ neither begins nor ends with a subword of some $0^{m_i}$. We can then write $v$ as 
\[
v = s 0^{m_i} w_{i+1} \ldots w_j 0^{m_j} p,
\]
where $1 < i \leq j < k-1$, $s$ is a suffix of $m_{i-1}$ (say of length $a$), and $p$ is a prefix of $m_{j+1}$ (say of length $b$). Since each $w_i$ was in $\mathcal{L}(X_{k,h})$, the sum of the letters of any $w_i$ is at most $h(n_i)$. The sum of the letters of $v$ is then less than or equal to
$h(a) + h(n_i) + \ldots + h(n_j) + h(b)$. Also,
\begin{multline*}
h(|v|) = h(a + m_i + n_i + \ldots + n_j + m_j + b) \geq \alpha(a + m_i + n_i + \ldots + n_j + m_j + b)
\geq\\
\alpha (a + n_i + \ldots + n_j + b) + e(a) + e(n_i) + \ldots + e(n_j) + e(b) \geq h(a) + h(n_i) + \ldots + h(n_j) + h(b).
\end{multline*}
Therefore, the sum of the letters of $v$ is less than or equal to $h(|v|)$. Since $v$ was arbitrary, $w$ is in $\mathcal{L}(X_{k,h})$, completing the proof.
\end{proof}

We do not believe that uniqueness of measure of maximal entropy is known for any bounded density shift without specification. Theorem~\ref{mainthm}, however, yields the following corollary (by taking $\phi = 0$).

\begin{corollary}
Any bounded density shift $(X_{k,h},T)$ with $h(n) = n\alpha + e(n)$ for $e(n)$ nondecreasing with $\liminf_{n \rightarrow \infty} e(n)/\ln n = 0$ has a unique measure of maximal entropy (which is fully supported and has the K-property). 
\end{corollary}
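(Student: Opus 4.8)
The plan is to deduce this as a direct consequence of Theorem~\ref{bdspec} and Theorem~\ref{mainthm} applied to the zero potential, together with Corollary~\ref{Kcor} for the K-property. First I would record that $(X_{k,h},T)$, being a subshift, is expansive; fix an expansivity constant $\delta$ small enough that $d(x,y)\le\delta$ forces $x(0)=y(0)$, so that $\delta$-shadowing a point for $n$ iterates is the same as agreeing with it on $n$ consecutive coordinates. With the gradient $\alpha$ positive (as is already required for Theorem~\ref{bdspec} to make sense), Theorem~\ref{bdspec} gives that $(X_{k,h},T)$ has non-uniform specification at scale $\delta$ with gap bounds $f(n)=2e(n)/\alpha$.

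Next I would take $\phi\equiv 0$. It is continuous, its partial sums $S_n\phi$ vanish identically, hence it has partial sum variation bounds $g(n)=0$ at scale $\delta$, and an equilibrium state for $\phi\equiv0$ is by definition a $\mu$ maximizing $h(X,T,\mu)$, i.e.\ precisely a measure of maximal entropy of $(X_{k,h},T)$. The growth hypothesis then reads
\[
\liminf_{n\to\infty}\frac{f(n)+g(n)}{\ln n}=\frac{2}{\alpha}\liminf_{n\to\infty}\frac{e(n)}{\ln n}=0,
\]
so $(X_{k,h},T)$ and $\phi\equiv 0$ satisfy the hypotheses of Theorem~\ref{mainthm}. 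Therefore $(X_{k,h},T)$ has a unique equilibrium state $\mu$ for $\phi\equiv 0$ --- that is, a unique measure of maximal entropy --- and $\mu$ is fully supported. Finally, $(X_{k,h},T)$ and $\phi\equiv 0$ also satisfy the hypotheses of Corollary~\ref{Kcor}, which gives that $(X_{k,h},T,\mu)$ is a K-system; in particular $\mu$ has the K-property.

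There is no substantial obstacle here: the content lies entirely in the already-established Theorem~\ref{bdspec} (non-uniform specification for bounded density shifts) and Theorem~\ref{mainthm} (uniqueness under sublogarithmic non-uniform specification). The only steps needing care are bookkeeping ones --- that the zero potential is a legitimate, continuous potential with $g(n)\equiv 0$, that its equilibrium states coincide with the measures of maximal entropy, and that we are invoking non-uniform specification at the letter-agreement scale $\delta$ rather than at an arbitrary small scale; the last point is legitimate by the remark following Theorem~\ref{mainthm2} that these hypotheses do not depend on the scale chosen below the expansivity constant (Lemmas~\ref{gapscaleindep} and~\ref{sumscaleindep}).
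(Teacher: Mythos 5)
Your proof is correct and is exactly the route the paper takes: the paper's entire justification is the parenthetical ``by taking $\phi=0$'' after Theorem~\ref{bdspec}, and your write-up just fills in the same bookkeeping (zero potential has $g(n)=0$, equilibrium states for $\phi=0$ are measures of maximal entropy, $\liminf f(n)/\ln n=0$, then Theorem~\ref{mainthm} and Corollary~\ref{Kcor}). No issues.
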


We also present a class of subshifts with non-uniform transitivity (but not non-uniform specification) to which our results apply. Interestingly, these subshifts cannot have periodic points, and yet our results imply uniqueness of the equilibrium state.

\begin{example}\label{transex}
Fix any Sturmian subshift $S$ (see Chapter 6 of \cite{fogg} for an introduction to Sturmian subshifts) and sequence of integers $\{n_k\}$ where $n_k \geq 2n_{k-1} + 2k$ for every $k$. Define the associated subshift $X_{S, \{n_k\}}$ as the set of all $x \in \{0,1\}^{\mathbb{Z}}$ so that for every $i \in \mathbb{Z}$ and $k \in \mathbb{N}$, the word $x(i) \ldots x(i + n_k + 2k - 1)$ contains a $k$-letter word in the language of $S$. 
\end{example}

\begin{remark}
Note that by definition, for any $x \in X_{S, \{n_k\}}$, the orbit closure of $x$ contains a point of $S$. Since Sturmian shifts contain no periodic points, this means that $X_{S, \{n_k\}}$ contains no periodic points. We also show that $X_{S, \{n_k\}}$ cannot have non-uniform specification. Suppose for a contradiction that $X_{S, \{n_k\}}$ has non-uniform specification with gap bounds $f(n)$. Choose any $x \in X_{S, \{n_k\}}$ with $x(0) = 0$. Then by taking limits of points which $\delta$-shadow $(x,\ldots,x)$ for $(1,\ldots,1)$ iterates with gaps $(f(1),\ldots,f(1))$, we see that $X_{S, \{n_k\}}$ must contain a sequence $y$ of the form $\ldots 0 w_{-1} 0 w_0 0 w_1 0 \ldots$, i.e. $y(m(1+f(1))) = 0$ for all $m \in \mathbb{Z}$. Then the orbit closure of $y$ contains some $s \in S$, which also has the property that $s(m(1+f(1))) = 0$ for all $m \in \mathbb{Z}$. However, this is impossible; Sturmian shifts have a unique invariant measure with respect to which all powers are ergodic, and so the existence of $s$ would contradict the ergodic theorem.
\end{remark}

\begin{theorem}
If $\lim_{n \rightarrow \infty} \frac{\ln n_k}{k} = \infty$, then $(X_{S, \{n_k\}}, T)$ has non-uniform transitivity with gap bounds $f(n)$ satisfying $\lim_{n \rightarrow \infty} \frac{f(n)}{\ln n} = 0$.
\end{theorem}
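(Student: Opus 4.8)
The plan is to translate non-uniform transitivity for $X:=X_{S,\{n_k\}}$ into a combinatorial gluing statement about $\mathcal{L}(X)$. Since $X$ is a subshift, choose its expansivity constant $\delta$ so that $d(p,q)\le\delta$ forces $p(0)=q(0)$; then ``$z$ $\delta$-shadows $(p,q)$ for $(n,n)$ iterates with gap $i$'' says exactly that $z$ contains the word $u\,t\,v$ for some word $t$ of length $i$, where $u=p(0)\cdots p(n-1)$ and $v=q(0)\cdots q(n-1)$. So it suffices to exhibit an increasing $f$ with $f(n)/\ln n\to 0$ such that for every $n$ and every pair $u,v\in\mathcal{L}(X)$ of length $n$ there is a word $t$ with $|t|\le f(n)$ and $utv\in\mathcal{L}(X)$.

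I would take $K=K(n)$ to be the largest index with $n_{K}\le 3n$ and show $f(n):=K(n)$ works. That $f(n)=o(\ln n)$ is exactly where the hypothesis enters: $\lim_{k}\ln n_k/k=\infty$ means $k/\ln n_k\to 0$, and $n_{K}\le 3n$ then forces $K/\ln n\to 0$. The gap word $t$ will be built around a Sturmian word (an element of $\mathcal{L}(S)$) of length $\ell:=K(n)$; I expect one must also prepend a short block of letters of the point underlying $u$ and append a short block of letters preceding $v$ in the point underlying $v$, so that the bridge attaches cleanly, but the dominant ingredient is the length-$\ell$ Sturmian word.

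To prove $utv\in\mathcal{L}(X)$, fix $x,y\in X$ realizing $u,v$ as their initial words, and form a biinfinite $z$ by concatenating: a Sturmian left tail, the block $x(0)\cdots x(n-1+d)$ (i.e. $u$ followed by a short continuation of $x$), the Sturmian bridge, the block $y(-e)\cdots y(n-1)$ (i.e. a short piece of $y$ followed by $v$), and a Sturmian right tail, for suitable small $d,e$. One then checks that every window $W$ of $z$ of length $n_k+2k$ contains a $k$-letter word of $\mathcal{L}(S)$, by case analysis on the position of $W$ relative to the finitely many concatenation points. Two facts do the work: (a) a window contained in a single block already contains a Sturmian $k$-word — for the $x$- and $y$-blocks because $x,y\in X$ and $|W|=n_k+2k$, and for a Sturmian block because a Sturmian word of length $\ge k$ contains a Sturmian $k$-word; and (b) since $x\in X$, any Sturmian-$k$-free subword of $x$ has length at most $n_k+2k-1$, and likewise for $y$, so a sufficiently long prefix or suffix of $u$ or $v$ automatically contains a Sturmian $k$-word. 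For a window $W$ straddling a junction: if $W$ meets the Sturmian side in $\ge k$ letters use (a); otherwise $W$ meets the other side in more than $n_k+k$ letters and one uses (b), splitting into the regime $k\le K$ — where the bridge, of length $\ell=K\ge k$, is itself a source of Sturmian $k$-words for any window containing it — and the regime $k>K$ — where $n_k+2k$ is so large (here the growth $n_k\ge 2n_{k-1}+2k$ and $\ln n_k/k\to\infty$ are used) that $W$ necessarily contains $\ge k$ consecutive letters of one of the infinite Sturmian tails.

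The main obstacle is precisely the windows straddling a junction between an adversarial piece — a prefix or suffix of $u$ or $v$, over which we have no control — and a Sturmian block: a window barely entering the Sturmian side sees an intermediate-length uncontrolled piece, which a priori could be a long Sturmian-$k$-free block that a short bridge does not help. Calibrating $\ell\approx K(n)$ so that each such window is caught either by the bridge (for $k\le K$) or by a long chunk of a Sturmian tail (for $k>K$), with no value of $k$ escaping, and arranging the short connecting blocks of $x$ and $y$ so that the potentially bad suffix of $u$ (resp.\ prefix of $v$) becomes harmless, is the delicate heart of the argument, and this is where the growth condition on $\{n_k\}$ is essential.
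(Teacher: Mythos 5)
Your reduction to a gluing statement in $\mathcal{L}(X)$, your choice of $f(n)$ of order $k(n)=o(\ln n)$, and your split into two regimes ($j$ at most the bridge scale, handled by the bridge; $j$ large, handled by the infinite Sturmian tails) all match the paper. But there is a genuine gap, and it is exactly the one you flag in your final paragraph without resolving: a window $W$ of length $n_j+2j$ that meets a Sturmian block in fewer than $j$ letters and meets the adjacent adversarial block (a prefix or suffix of $u$ or $v$) in between $n_j+j+1$ and $n_j+2j-1$ letters is caught by neither of your facts (a) nor (b) --- the Sturmian overlap is too short to contain a $j$-word, and the adversarial overlap is too short to invoke the defining property of $X$. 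No calibration of the bridge length $\ell$ fixes this, since the problem is local to the junction, and prepending an uncontrolled continuation of the point underlying $u$ merely relocates the bad junction rather than removing it.

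The paper closes this gap with a device absent from your proposal. Its bridge is $st$ with $|s|=|t|=k$, where $s$ is a Sturmian right-extension of the \emph{maximal} Sturmian suffix $s_v$ of the first word (so $s_v s\in\mathcal{L}(S)$) and $t$ is a Sturmian left-extension of the maximal Sturmian prefix $p_w$ of the second word (so $tp_w\in\mathcal{L}(S)$); the two infinite tails are likewise Sturmian extensions of the maximal Sturmian prefix/suffix at the outer ends. Then for a straddling window $W=qr$ with $q$ a suffix of $t$ and $r$ a prefix of $w$: if $|q|+|p_w|\ge j$, the $j$-letter prefix of $W$ lies inside $tp_w\in\mathcal{L}(S)$; if $|q|+|p_w|<j$, left-extendability of $w$ in $\mathcal{L}(X)$ gives $q'$ with $q'w\in\mathcal{L}(X)$, so $q'r$ contains a Sturmian $j$-word, and \emph{maximality} of $p_w$ forces that word to lie entirely inside $r\subseteq w$ (otherwise it would exhibit a Sturmian prefix of $w$ strictly longer than $p_w$). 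This maximality argument is the missing idea; the only junction the paper leaves ``raw'' is the one between $s$ and $t$, which is harmless because both sides are Sturmian of length $k\ge j$. Without this (or an equivalent mechanism), your intermediate-regime windows are not handled and the proof does not go through.
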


\begin{proof}
Consider any such $S$, $\{n_k\}$, and associated subshift $X = X_{S, \{n_k\}}$. We claim that $(X, T)$ has non-uniform transitivity with gap bounds $f(n)$ where $f(n) = 2k$ for the minimal $k$ where $n \leq n_k$. This implies the desired result, since clearly $\lim_{n \rightarrow \infty} \frac{f(n)}{\ln n} = 0$ if $\lim_{n \rightarrow \infty} \frac{\ln n_k}{k} = \infty$. As above, $\delta$-shadowing orbit segments is just the same as containing words from the language, so it suffices to show that 
for any $v,w \in \mathcal{L}(X)$ with $|v| = |w| \leq n_k$, there exists $u$ with $|u| = 2k$ so that $vuw \in \mathcal{L}(X)$.

Since $v$ and $w$ can be extended on the left and right to words of length $n_k$ in $\mathcal{L}(X)$, it suffices to treat only the case $|v| = |w| = n_k$; choose any such $v,w$. Of the prefixes of $v$, choose the one with maximal length which is in $\mathcal{L}(S)$, and denote it by $p_v$. Similarly define a prefix $p_w$ of $w$, and suffixes $s_v$ and $s_w$ of $v$ and $w$ respectively. Since $p_v, s_w \in \mathcal{L}(S)$, there exist a left-infinite sequence $x$ and a right-infinite sequence $y$ for which $x p_v, s_w y \in \mathcal{L}(S)$. Similarly, since $s_v, p_w \in \mathcal{L}(S)$, there exist $s, t$ with length $k$ so that $s_v s, t p_w \in \mathcal{L}(S)$. We claim that $x v s t w y \in X$, which will imply that $v st w \in \mathcal{L}(X)$, completing our proof by taking $u = st$. 

For this proof, we need to show that for every $j$ and every $(n_j + 2j)$-letter subword $z$ of $xvstwy$, $z$ contains a word in $\mathcal{L}(S)$. We break into cases, and first treat the case where $j > k$. Then $z$ has length $n_j + 2j > 2n_{j-1} + 4j \geq 2n_k + 2k + 2j$. Then $z$ must contain a $j$-letter subword of either $x$ or $y$, which by definition is in $\mathcal{L}(S)$.

Suppose instead that $j \leq k$. If $z$ contains letters from both $s$ and $t$, then it contains a $j$-letter subword of one of them, which is in $\mathcal{L}(S)$. The remaining case is where $z$ is a subword of either $xvs$ or $twy$; without loss of generality, we assume the latter. If $j = k$, then since $|z| = 2n_k + 2k > n_k + 2k$, $z$ contains either a $j$-letter subword of $t$ or $y$, which is in $\mathcal{L}(S)$. So, we from now on assume $j < k$. This means that we can write $z = qr$, where either $q$ is a suffix of $t$ and $r$ is a prefix of $w$ or $q$ is a prefix of $w$ and $r$ is a prefix of $y$; without loss of generality, we assume the former. 

If $|q| + |p_w| \geq j$, then the $j$-letter prefix of $z = qr$ is a subword of $t p_w \in \mathcal{L}(S)$, so it would be in $\mathcal{L}(S)$ as well. The only remaining case is $|q| + |p_w| < j$. We claim that here, $r$ contains a $j$-letter word in $\mathcal{L}(S)$. To see this, recall that $w$ was in $\mathcal{L}(X)$, and so there exists $q'$ with $|q'| = |q|$ so that $q' w \in \mathcal{L}(X)$. In particular, this means that $q'r$, which is a subword of $q' w$ with length $n_j + 2j$, contains a $j$-letter word in $\mathcal{L}(S)$. If this word was not entirely contained in $r$, then it would contain a prefix of $w$ of length $|p_w| + 1$, which would be in $\mathcal{L}(S)$, contradicting maximality of $p_w$ in its definition. So, we know that $r$ contains a $j$-letter word in $\mathcal{L}(S)$, implying that $z = qr$ does as well. This shows that $vuw \in \mathcal{L}(X)$ (for $u = st$), completing the proof.

\end{proof}

Finally, we present a simple condition on a potential $\phi$ which guarantees slowly growing partial sum variation bounds, in the spirit of a proposition from \cite{bowen}. The proof is essentially identical to the one given in \cite{bowen}, and so we omit it here.

\begin{theorem}\label{gbounds}
For $(X,T)$, a potential $\phi$, $n \in \mathbb{N}$, and $\eta > 0$, define $\textrm{Var}(X,T,\phi,n,\eta)$ to be the maximum of $|\phi(x) - \phi(y)|$ over pairs $(x,y)$ where $d(T^i x, T^i y) < \eta$ for all $|i| \leq n$. Then $\phi$ has partial sum variation bounds $g(n)$ at scale $\eta$ defined by
\[
g(n) = 2\sum_{i=0}^{\lfloor n/2 \rfloor} \textrm{Var}(X,T,\phi,n,\eta).
\]
\end{theorem}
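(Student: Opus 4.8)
The plan is to carry out the standard telescoping estimate of Bowen. Fix the scale $\eta > 0$, and observe that since the proposed $g$ is nondecreasing in $n$ (increasing $n$ only enlarges the summation range and adds nonnegative terms), it suffices to prove $V(X,T,\phi,m,\eta) \le 2\sum_{i=0}^{\lfloor m/2 \rfloor}\textrm{Var}(X,T,\phi,i,\eta)$ for every $m$; indeed, this gives $V(X,T,\phi,i,\eta) \le g(i) \le g(n)$ for all $i \le n$, which is exactly the definition of $\phi$ having partial sum variation bounds $g(n)$ at scale $\eta$. So fix $m$ and any pair $x,y \in X$ with $d(T^i x, T^i y) < \eta$ for all $0 \le i < m$, and estimate $|S_m\phi(x) - S_m\phi(y)|$.

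The first step is to write $S_m\phi(x) - S_m\phi(y) = \sum_{j=0}^{m-1}\bigl(\phi(T^j x) - \phi(T^j y)\bigr)$ and bound each summand separately. The key point is that for a fixed $j$, applying the hypothesis at index $i+j$ gives $d\bigl(T^i(T^j x), T^i(T^j y)\bigr) = d(T^{i+j}x, T^{i+j}y) < \eta$ whenever $0 \le i+j < m$, i.e. whenever $-j \le i \le m-1-j$; in particular this holds for all $|i| \le k_j$, where $k_j := \min(j,\, m-1-j)$. By the definition of $\textrm{Var}$ this yields $|\phi(T^j x) - \phi(T^j y)| \le \textrm{Var}(X,T,\phi,k_j,\eta)$, and hence
\[
|S_m\phi(x) - S_m\phi(y)| \le \sum_{j=0}^{m-1}\textrm{Var}(X,T,\phi,k_j,\eta).
\]

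To finish I would record two elementary facts and combine them. First, $\textrm{Var}(X,T,\phi,\cdot,\eta)$ is nonincreasing in its third argument, since enlarging the index only shrinks the set of admissible pairs over which the maximum is taken. Second, as $j$ runs over $\{0,1,\dots,m-1\}$ the quantity $k_j = \min(j,\,m-1-j)$ never exceeds $\lfloor (m-1)/2 \rfloor \le \lfloor m/2\rfloor$, and it attains each value in $\{0,1,\dots,\lfloor (m-1)/2\rfloor\}$ at most twice. Together these bound the right-hand side above by $2\sum_{i=0}^{\lfloor m/2\rfloor}\textrm{Var}(X,T,\phi,i,\eta)$; taking the supremum over admissible pairs $(x,y)$ gives the claimed bound on $V(X,T,\phi,m,\eta)$, and monotonicity of $g$ completes the proof. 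I do not anticipate any genuine obstacle here: the only points requiring care are the bookkeeping in the reindexing step — the off-by-one between $\lfloor (m-1)/2\rfloor$ and $\lfloor m/2 \rfloor$ and the count of how many $j$ produce a given value of $\min(j,m-1-j)$ — and confirming that $\textrm{Var}$ decreases rather than increases in its index.
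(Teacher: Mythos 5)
Your argument is correct and is exactly the standard Bowen telescoping estimate that the paper invokes by reference (it omits the proof as ``essentially identical to the one given in \cite{bowen}''): split $S_m\phi(x)-S_m\phi(y)$ termwise, note the $j$th term is controlled by $\textrm{Var}(X,T,\phi,\min(j,m-1-j),\eta)$, and count each index at most twice. You also correctly read the summand as $\textrm{Var}(X,T,\phi,i,\eta)$ rather than the literal $\textrm{Var}(X,T,\phi,n,\eta)$ printed in the statement, which is evidently a typo, since the literal version would fail already for $m=1$ whenever $\textrm{Var}(X,T,\phi,0,\eta)>\textrm{Var}(X,T,\phi,n,\eta)$.
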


The following is an immediate corollary of Theorem~\ref{gbounds} and Lemma~\ref{sumscaleindep}.

\begin{corollary}\label{slowgrowth}
If $(X,T)$ is expansive, $\phi$ is a potential, $\eta > 0$, and $\phi$ is a potential with $\lim_{n \rightarrow \infty} n \textrm{Var}(X,T,\phi,n,\eta) = 0$, then $\phi$ has partial sum variation bounds $g(n)$ satisfying $\lim_{n \rightarrow \infty} g(n)/\ln n = 0$.
\end{corollary}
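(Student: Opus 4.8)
The plan is to apply Theorem~\ref{gbounds} and then reduce to an elementary summation estimate. Theorem~\ref{gbounds} tells us that $g(n) := 2\sum_{i=0}^{\lfloor n/2\rfloor}\textrm{Var}(X,T,\phi,i,\eta)$ is an admissible choice of partial sum variation bounds for $\phi$ at scale $\eta$. (If one insists on working at the expansivity scale $\delta$ instead, note that $\textrm{Var}(X,T,\phi,i,\cdot)$ is nondecreasing in the scale, so the hypothesis $n\,\textrm{Var}(X,T,\phi,n,\eta)\to 0$ is preserved upon replacing $\eta$ by $\min(\eta,\delta)$, after which Lemma~\ref{sumscaleindep} upgrades the bound to scale $\delta$ at the cost of an additive constant $D$, which is harmless below.) So it suffices to show that $\frac{1}{\ln n}\sum_{i=0}^{\lfloor n/2\rfloor}\textrm{Var}(X,T,\phi,i,\eta)\to 0$ as $n\to\infty$.

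First I would record that $a_i := \textrm{Var}(X,T,\phi,i,\eta)$ satisfies $a_0\le\sup\phi-\inf\phi<\infty$ (compactness of $X$ and continuity of $\phi$), so the $i=0$ term contributes $O(1/\ln n)$ after dividing and can be discarded. For $i\ge 1$ set $\epsilon_i := i\,a_i$, so that $\epsilon_i\to 0$ by hypothesis and $a_i=\epsilon_i/i$. The remaining sum is then a logarithmic (Ces\`aro) mean of a null sequence: given $\rho>0$, choose $M$ with $|\epsilon_i|<\rho$ for all $i>M$, so that for $N=\lfloor n/2\rfloor$,
\[
\sum_{i=1}^{N} a_i \;=\; \sum_{i=1}^{M}\frac{\epsilon_i}{i} \;+\; \sum_{i=M+1}^{N}\frac{\epsilon_i}{i} \;\le\; C_M \;+\; \rho\sum_{i=M+1}^{N}\frac1i \;\le\; C_M \;+\; \rho\ln n,
\]
where $C_M:=\sum_{i=1}^{M}|\epsilon_i|/i$ depends only on $\rho$, not on $n$. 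Dividing by $\ln n$ and letting $n\to\infty$ gives $\limsup_n \frac{1}{\ln n}\sum_{i=0}^{\lfloor n/2\rfloor} a_i\le\rho$; since $\rho>0$ was arbitrary, this limsup is $0$, hence $g(n)/\ln n\to 0$ (and likewise $(g(n)+D)/\ln n\to 0$).

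I do not anticipate a genuine obstacle here: once Theorem~\ref{gbounds} is in hand, the corollary is exactly the statement that a sequence $a_i$ with $i\,a_i\to 0$ has partial sums $\sum_{i\le N} a_i = o(\ln N)$, which is the routine logarithmic-means computation above. The only mild care needed is the bookkeeping hidden inside Theorem~\ref{gbounds}, namely that for $0\le j<n$ each value $m$ arises at most twice as $\min(j,\,n-1-j)$, so that $\sum_{j=0}^{n-1}|\phi(T^jx)-\phi(T^jy)|\le 2\sum_{m=0}^{\lfloor n/2\rfloor}\textrm{Var}(X,T,\phi,m,\eta)$; but that is precisely what Theorem~\ref{gbounds} already grants, so nothing further is required.
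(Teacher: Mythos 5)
Your proof is correct and follows exactly the route the paper intends: the paper states Corollary~\ref{slowgrowth} as an immediate consequence of Theorem~\ref{gbounds} and Lemma~\ref{sumscaleindep}, and your logarithmic-means computation (writing $a_i = \epsilon_i/i$ with $\epsilon_i \to 0$ and bounding the tail by $\rho \ln n$) is precisely the elementary estimate being left to the reader, handled cleanly along with the scale bookkeeping.
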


It is simple to construct potentials for which $\textrm{Var}(X,T,\phi,n,\eta)$ grows as slowly as desired; the following is one example.

\begin{example}\label{potex}
For any increasing $h: \mathbb{N} \rightarrow \mathbb{R}^+$ and the full shift $(X,T)$ on symbols $0$ and $1$, define a potential $\phi_h$ by $\phi_h(x) = \frac{1}{h(k)}$, where $k$ is the maximal integer where $x(-k) = \ldots = x(k)$. (If $x$ consists entirely of $0$s or $1$s, then $\phi(x) = 0$.)
\end{example}

\begin{lemma}
If $\lim_{n \rightarrow \infty} h(n)/n = \infty$, then $\phi_h$ satisfies the hypotheses of Corollary~\ref{slowgrowth}. If $\frac{1}{h(n)}$ is not summable, then $\phi_h$ is not Bowen.
\end{lemma}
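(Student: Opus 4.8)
The statement has two parts. For the first, I want to show that if $h(n)/n \to \infty$, then $n \, \textrm{Var}(X,T,\phi_h,n,\eta) \to 0$, and then invoke Corollary~\ref{slowgrowth}. For the second, I want to show that if $\sum 1/h(n)$ diverges, then the partial sum variations $V(X,T,\phi_h,n,\eta)$ are unbounded in $n$, so $\phi_h$ is not Bowen.

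\textbf{Part 1.} Fix a small scale $\eta$ so that $d(x,y) < \eta$ forces $x(0) = y(0)$; more generally, for $|i| \le n$, $d(T^i x, T^i y) < \eta$ forces $x(i) = y(i)$ for $|i| \le n$, so $x$ and $y$ agree on coordinates $-n$ through $n$. Now suppose $(x,y)$ is such a pair. If for $x$ the maximal $k$ with $x(-k) = \cdots = x(k)$ satisfies $k \ge n$, then since $x$ and $y$ agree on $[-n,n]$ but... actually I need to be more careful: the relevant $k$ for $x$ might exceed $n$, in which case $y$'s value of $k$ need not match. But then $k \ge n$ means $\phi_h(x) = 1/h(k) \le 1/h(n)$, and similarly if $x$'s value of $k$ is $< n$, then $x(-k) = \cdots = x(k)$ with $x(-k-1) \ne x(-k)$ or $x(k+1) \ne x(k)$, and since $k+1 \le n$ both $x$ and $y$ agree on $[-k-1,k+1]$, so $y$ has the same value of $k$, whence $\phi_h(x) = \phi_h(y)$. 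Combining, $|\phi_h(x) - \phi_h(y)| \le 1/h(n)$ in all cases (the only nonzero contributions come from the case where at least one of the two has its maximal run reaching length $n$). Hence $\textrm{Var}(X,T,\phi_h,n,\eta) \le 1/h(n)$, so $n\,\textrm{Var}(X,T,\phi_h,n,\eta) \le n/h(n) \to 0$, and Corollary~\ref{slowgrowth} applies.

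\textbf{Part 2.} To see $\phi_h$ is not Bowen when $\sum 1/h(n)$ diverges, I want to exhibit, for each $n$, a pair $(x,y)$ with $d(T^i x, T^i y) < \eta$ for $0 \le i < n$ but $|S_n\phi_h(x) - S_n\phi_h(y)|$ large. Take $x = \cdots 000 \cdots$ the all-zeros point and let $y$ agree with $x$ on coordinates $[0, n-1]$ (so $d(T^i x, T^i y) < \eta$ there, since $(T^i y)(0) = y(i) = 0 = (T^i x)(0)$ for $0 \le i < n$), but with $y(-m) = 1$ for a suitably chosen negative coordinate, or more simply with $y(j) = 1$ for all $j < 0$ and $y(j) = 0$ for $j \ge 0$. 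Then $S_n\phi_h(x) = 0$, while for $0 \le i \le n-1$, the point $T^i y$ has $(T^i y)(\ell) = y(i+\ell) = 0$ for $\ell \ge -i$ and $= 1$ for $\ell < -i$; the maximal $k$ with $(T^i y)(-k) = \cdots = (T^i y)(k)$ is exactly $k = i$, so $\phi_h(T^i y) = 1/h(i)$ (with the $i=0$ term being $1/h(0)$ or $0$ depending on convention). Thus $S_n \phi_h(y) = \sum_{i=0}^{n-1} 1/h(i)$, which tends to $\infty$ by the non-summability hypothesis, so $V(X,T,\phi_h,n,\eta) \ge \sum_{i=0}^{n-1}1/h(i) \to \infty$ and $\phi_h$ is not Bowen.

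\textbf{Main obstacle.} The only delicate point is the bookkeeping in Part 1: carefully checking that when the maximal run length $k$ for one point is strictly less than $n$, the agreement of the two points on $[-n,n]$ (equivalently on $[-(k+1),k+1]$) forces the other point to have the \emph{same} value of $k$, so that the only surviving contributions to the variation are bounded by $1/h(n)$. Once that case analysis is set up correctly, both parts are short; there is no deep difficulty, which is consistent with the paper's remark that the proof of Theorem~\ref{gbounds} is omitted as essentially identical to Bowen's.
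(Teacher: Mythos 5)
Your proof is correct and follows essentially the same route as the paper's: the same case analysis showing $\textrm{Var}(X,T,\phi_h,n,\eta) \leq 1/h(n)$ (a maximal run of length $k<n$ is detected by both points, forcing $\phi_h(x)=\phi_h(y)$), and the same witness pair $x=0^{\mathbb{Z}}$, $y(j)=0$ iff $j\geq 0$ giving $S_n\phi_h(y)=\sum 1/h(i)$ for the non-Bowen claim. The minor indexing question at $i=0$ and the informal identification of coordinate agreement with $d<\eta$ are present in the paper's own argument as well.
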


\begin{proof}
We claim that $\textrm{Var}(X,T,\phi_h,n,\delta) = \frac{1}{h(n)}$. To see this, first note that a pair $(x,y)$ satisfies $d(T^i x, T^i y) < \delta$ for $|i| \leq n$ iff $x(-n) \ldots x(n) = y(-n) \ldots y(n)$. If $\phi_h(x) = \frac{1}{h(i)}$ for some $i < n$, then $x(-i) = \ldots = x(i)$ and either $x(-(i+1))$ or $x(i+1)$ is not equal to $x(0)$. The same is then true of $y$, so $\phi_h(y) = \phi_h(x)$.

Therefore, if $\phi_h(x) \neq \phi_h(y)$, then both are less than or equal to $\frac{1}{h(n)}$, so $|\phi_h(x) - \phi_h(y)| \leq \frac{1}{h(n)}$, implying $\textrm{Var}(X,T,\phi_h,n,\delta) \leq \frac{1}{h(n)}$. Finally, $x = 0^{\mathbb{Z}}$ and $y$ defined by $y(i) = 0$ iff $|i| \leq n$ have $|\phi_h(x) - \phi_h(y)| = \frac{1}{h(n)}$, so $\textrm{Var}(X,T,\phi_h,n,\delta) = \frac{1}{h(n)}$.

Since $\lim_{n \rightarrow \infty} h(n)/n = \infty$, $\lim_{n \rightarrow \infty} n \textrm{Var}(X,T,\phi_h,n,\delta) = 0$, and so $\phi_h$ satisfies the hypotheses of Corollary~\ref{slowgrowth}.

Similarly to above, take $x = 0^{\mathbb{Z}}$ and $y$ defined by $y(i) = 0$ iff $i \geq 0$. Then
$d(T^i x, T^i y) < \delta$ for $0 \leq i < n$, and $\left| S_n \phi_h(x) - S_n \phi_h(y) \right| 
= \sum_{i = 1}^n \frac{1}{h(i)}$. It is then clear that if $\frac{1}{h(n)}$ is not summable, then $\phi_h$ does not have the Bowen property, completing the proof.

\end{proof}

Theorems~\ref{mainthm} and \ref{mainthm2} then provide uniqueness of equilibrium state, its full support, and sometimes the $K$-property, for many of these examples, including measures of maximal entropy for various bounded density shifts and the shifts $X_{S, \{n_k\}}$ of Example~\ref{transex}. However, the easiest new application is probably the uniqueness of equilibrium state for any expansive $(X,T)$ with weak specification and any $\phi$ satisfying Corollary~\ref{slowgrowth}. 

This even includes examples on manifolds. For instance, if $X = [0,1)$ and $T: x \mapsto 2x \pmod 1$, then though $(X,T)$ is non-invertible, its natural extension is invertible and has weak specification (for $f = 0$). A simple example of $\phi$ on $(X,T)$ which is not Bowen but has unique equilibrium state by Corollary~\ref{slowgrowth} is $\phi(x) = \frac{1}{1 + \log(1/x) \log \log(1/x)}$.

\bibliographystyle{plain}
\bibliography{gapspec}

\end{document}